\documentclass[a4paper,12pt]{article}
\usepackage[utf8]{inputenc}
\usepackage{amsmath,amssymb,mathrsfs,amscd,graphicx,multicol,amsthm}
\usepackage{subcaption}
\usepackage{tikz}
\usepackage{float}
\usepackage{verbatim}
\usepackage{epstopdf}
\usepackage{titletoc}
\usepackage{hyperref }
\numberwithin{equation}{section}
\allowdisplaybreaks

\theoremstyle{definition}
\newtheorem{theorem}{Theorem}[section]

\newtheorem{Proposition}[theorem]{Proposition}
\newtheorem{lemma}[theorem]{Lemma}
\newtheorem{corollary}[theorem]{Corollary}
\newtheorem{RHP}{RH problem}[section]
\newtheorem{remark}[theorem]{Remark}

\DeclareMathOperator*{\Res}{Res}

\addtolength{\topmargin}{-10ex}
\addtolength{\oddsidemargin}{-3em}
\addtolength{\evensidemargin}{-3em}
\addtolength{\textheight}{15ex}
\addtolength{\textwidth}{4em}

\def\C{\mathbb{C}}
\def\L{\mathbb{L}}
\def\R{\mathbb{R}}

\def\dd{\mathrm{d}}
\def\ii{\mathrm{i}}
\def\ee{\mathrm{e}}

\setlength{\floatsep}{10pt plus 2pt minus 2pt}
\setlength{\textfloatsep}{10pt plus 2pt minus 2pt}
\setlength{\intextsep}{10pt plus 2pt minus 2pt}


\begin{document}
	\title{\bf Long-time asymptotics of the KdV equation with delta function initial profiles}
	
	\author{Xuliang Liu$^\text{a}$, Deng-Shan Wang$^\text{b}$}
	\date{}
	\maketitle
	$^{\rm a)}$~Laboratory of Mathematics and Complex Systems (Ministry of Education),  \\ School of Mathematical Sciences, Beijing Normal University, Beijing 100875, China.
	
	E-mail: xuliangliu@mail.bnu.edu.cn
	
	$^{\rm b)}$~Laboratory of Mathematics and Complex Systems (Ministry of Education),  \\ School of Mathematical Sciences, Beijing Normal University, Beijing 100875, China. 
	
	E-mail: dswang@bnu.edu.cn 

	\begin{abstract}
		
		This work investigates the long-time asymptotic behaviors of the solution to the KdV equation with delta function initial profiles in different regions, employing the Riemann-Hilbert formulation and Deift-Zhou nonlinear steepest descent method. When the initial value is a delta potential well, the asymptotic solution is predominantly dominated by a single soliton in certain region for $x>0$, while in other regions, the dispersive tails including self-similar region, collisionless shock region and dispersive wave region, play a more significant role. Conversely, when the initial value is a delta potential barrier, the soliton region is absent, although the dispersive tails still persist. Moreover, the general delta function initial profile with $L$-spikes is also studied and it is proved that one to $L$ solitons will be generated in soliton region, which depends on the sizes of the distance and height of the spikes. The leading-order terms of the solution in each region are derived, highlighting the efficacy of the Riemann-Hilbert formulation in elucidating the long-time behaviors of integrable systems.

	\end{abstract}
	
	\vspace{8pt}
	{\footnotesize
		\noindent{\bf Keywords:} Riemann-Hilbert problem, Lax pair, KdV equation, delta function, soliton region.\\
		\noindent{\bf Mathematics Subject Classification 2020: 37K10; 37K40}  \\
	}
	
	\tableofcontents
	
	\section{Introduction}
	
	The Korteweg-de Vries (KdV) equation  is one of the most fundamental nonlinear partial differential equations in mathematical physics. Originally derived in 1895 by D. J. Korteweg and his student G. de Vries \cite{KdV-1895} to describe the propagation of shallow water waves, the KdV equation has a wide range of applications in many fields, such as plasma physics, nonlinear optics and fluid dynamics. Its significance lies not only on its physical relevance but also in its mathematical structure: the KdV equation is a prototypical example of an integrable system that possess an infinite number of conserved quantities and can be solved exactly using the inverse scattering transform (IST), which was discovered by Gardner, Greene, Kruskal and Miura in 1967 \cite{GGKM-1965}, providing a powerful framework for analyzing exact soliton solutions and long-time asymptotic behaviors of integrable systems. The IST method is a nonlinear analogue of the Fourier transform, which is used to solve linear partial differential equations. It relies on the association of the nonlinear equation with a linear eigenvalue problem, i.e., Lax pair \cite{Lax-1968}, typically the Schr\"{o}dinger equation \cite{Drazin-1989,Marchenko-2011,Egorova-Teschl-2015} in the case of the KdV equation. The initial data of the integrable system are used to determine the scattering data, which includes the eigenvalues, reflection coefficient and normalization constants associated with the linear eigenvalue problem. The solutions of the integrable systems are then reconstructed from the scattering data by using a set of integral equations. This method \cite{GGKM-1965} not only provides exact solutions but also offers deep insights into the qualitative behavior of the solutions, such as soliton formation, approximated interaction solution of soliton \cite{Ablowitz-Segur-B-1981} and mean field \cite{Ablowitz-Luo-2018,Ablowitz-Luo-2023} and long-time asymptotics \cite{Ablowitz-Kodama-1982}-\cite{Girotti-Grava-2021}.
	\par
	The study of long-time asymptotic behaviors of solutions to the KdV equation has been a central topic in the field of integrable systems. In the earlier years, Hruslov \cite{Hruslov-1976} studies the asymptotics of solution to KdV equation with initial data of step type by IST. Subsequently, Cohen \cite{Cohen-1984} again discussed the asymptotic solution of the KdV equation with step-like initial profile. One of the key developments in this area was attributed to Deift and Zhou \cite{Deift-Zhou-1993}, who established the Riemann-Hilbert (RH) problem and proposed the Deift-Zhou nonlinear steepest descent method to give a rigorous framework for analyzing the long-time behavior of solutions to integrable nonlinear equations, even the theory of random matrix models and integrable statistical mechanics \cite{Deift-2017}. The RH approach is particularly well-suited for studying the asymptotics of solutions in different regions of the upper $(x,t)$-plane, where the behaviors of the solution can vary significantly. In their seminal work, Deift, Venakides and Zhou \cite{Deift-Venakides-Zhou-1994} applied the RH approach to the KdV equation with smooth, decaying initial data, showing that the long-time asymptotic behavior of the solution could be described in terms of a finite number of solitons and a dispersive tail. The solitons are localized, traveling wave solutions that emerge from the initial data and propagate without changing their shape, while the dispersive tail decays as $t^{-1/2}$ and exhibits oscillatory behavior. The RH method allows for a detailed analysis of the transition between these different regions, providing precise estimates for the solution in each case.
	\par
	The work of Deift, Venakides and Zhou \cite{Deift-Venakides-Zhou-1994} has inspired a wealth of research on the long-time asymptotics of integrable systems. In particular, Grunert and Teschl \cite{Grunert-Teschl-2009} investigated the long-time asymptotic behavior of the KdV equation with decaying initial data in the soliton and similarity regions. Then Teschl and his colleagues \cite{Teschl-Nonlinearity-2013} conducted a detailed study on the long-time asymptotics of the KdV equation with step-like initial data, providing an in-depth description of the solution in the elliptic wave region. Subsequently, they \cite{Teschl-JDE-2016} studied rarefaction waves in the KdV equation with step-like initial data via Deift-Zhou nonlinear steepest descent method. Bilman, Trogdon, Olver and Deconinck \cite{Trogdon-Olver-Deconinck-2012,Trogdon-Deconinck-2014,Bilman-Trogdon-2020} developed numerical inverse scattering method to analyze the long-time behaviors of the KdV equation. These studies provide a detailed description of the soliton parameters and the decay of the dispersive tail, offering new insights into the dynamics of the KdV equation with singular initial data.
	\par
	This paper builds on the former work on asymptotic analysis of the KdV equation with various initial data \cite{Grunert-Teschl-2009,Teschl-Nonlinearity-2013,Teschl-JDE-2016} to explore the long-time asymptotic behaviors of the KdV equation with delta function initial profiles \cite{Drazin-1989}. Firstly, the KdV equation with only one delta function initial profile is considered
	\begin{equation}
		\label{InitialProblem}
		\left\{
		\begin{aligned}
			&u_t-6uu_x+u_{xxx}=0,\\
			&u(x,0)=u_0(x) = -U_0\delta(x),
		\end{aligned}
		\right.
	\end{equation}
	where $U_0\in\R\backslash\{0\}$ is constant, and $\delta(x)$ is the delta function which satisfies
	\begin{equation}
		\delta(x)=\begin{cases}
			\infty,&x=0,\\
			0,&x\neq0.
		\end{cases}\quad\quad
		\int_{-\infty}^{\infty}\delta(x)\dd x=1.
	\end{equation}
	Notice that the case of $U_0>0$ corresponds to a delta potential well, whereas the case of $U_0<0$ corresponds to a delta potential barrier.
	\par
	More generally, we also study the long-time asymptotic behaviors of KdV equation with general delta function initial profile that features multiple spikes \cite{Cohen-Kappeler-1987}
	\begin{equation}
		\label{L-delta-intial}
		u(x,0)=-\sum_{n=1}^LU_n\delta_n(x),
	\end{equation}
	where $U_n\in\R\backslash\{0\}$ and $\delta_n(x)=\delta(x-x_n)$ with $x_n\in \R$. Specifically, it reduces to the one delta function initial profile in (\ref{InitialProblem}) for $L=1$, $U_1=U_0$ and $x_1=0$.
	\par
	It is known that the KdV equation is a completely integrable system associated with the linear Schr\"{o}dinger equation $\mathcal{L}\phi=\lambda \phi$ with $\mathcal{L}=\frac{\partial^2}{\partial x^2}-u$ in the context of inverse scattering theory \cite{Atkinson-1975,Aktosun-1999}. More than fifty years ago, there has been much work on the direct/inverse scattering theory of the Schr\"{o}dinger operator \cite{Buslaev-Fomin-1962,Cohen-Kappeler-1985} with various initial condition such as step-like initial values. The study of the linear Schr\"{o}dinger equation with delta function potential is a fundamental topic in quantum mechanics and mathematical physics, offering deep insights into the behavior of quantum systems with singular interactions. The delta function potential is significant as it offers a simple yet powerful framework for modeling point-like interactions, such as crystal lattice impurities or quantum well interfaces. In one-dimensional case, the Schr\"{o}dinger equation with a delta function potential can be solved analytically, and the solutions describe bound states and scattering states, depending on the sign and magnitude of $U_0$. In 2010, Deift and Park \cite{Deift-Park-2011} explored the long-time asymptotics of solutions to the nonlinear Schr\"{o}dinger equation with a delta function potential.
	\par
	In addition, it is well-known that the KdV equation is related with the modified-KdV (mKdV) equation \cite{Deift-Zhou-1993}
	\begin{equation}
		\label{mKdV}
		v_t-6v^2v_x+v_{xxx}=0,
	\end{equation}
	through the	Miura transformation \cite{Miura-1968}
	\begin{equation}\label{Miura-mKdV}
		u=v^2+v_x.
	\end{equation}
	Therefore, to some extent, one can study the long-time asymptotic behavior of the KdV equation through that of the mKdV equation. When investigating the long-time asymptotics of the initial problem (\ref{InitialProblem}) in Painlev\'e region, the Miura transformation is taken to simplify the calculation.
	However, directly studying the KdV equation will reveal more abundant phenomena.
	\par
	This work adopts the IST method \cite{GGKM-1965} combined with the Deift-Zhou nonlinear steepest descent method \cite{Deift-Zhou-1993} to analyze the asymptotic solution of initial problem (\ref{InitialProblem}) in different regions of the $(x,t)$-plane. It will be shown that although the delta-function initial profile in (\ref{InitialProblem}) is discontinuous and contains a singularity at the origin, the solution to this problem reveals surprisingly rich and regular characteristics \cite{Cohen-1979} in its long-time asymptotic behavior.
	\par
	This paper is organized as follows: in Section 2, the main results of this paper is proposed. In Section 3, the IST for the initial problem (\ref{InitialProblem}) is reviewed and the corresponding RH problem is formulated. Section 4 applies the Deift-Zhou nonlinear steepest descent method to analyze the long-time behavior of the solution to the initial problem (\ref{InitialProblem}) for the case of $U_0>0$ and $U_0<0$, respectively.

	\section{Main Results}
	
	This section lists the main results of the present work. For the initial value problem $(\ref{InitialProblem})$, one can derive the reflection coefficient
	\begin{equation}\label{r-k-one-delta}
		r(k)=\frac{\ii U_0}{\ii U_0-2k},
	\end{equation}
	which has only one simple pole and is analytic on the complex plane $\C$ except for $k=\frac{\ii U_0}{2}$. Based on this fact, one can omit the step of the analytic continuation of reflection coefficient $r(k)$ in the process of nonlinear steepest descent analysis.
	\par	
	Through deforming and analyzing the RH problem associated with the initial value problem (\ref{InitialProblem}), the long-time asymptotics of the solution is derived in different regions. In what follows, the main results of this work are proposed by classifying the sign of the parameter $U_0$, i.e., $U_0>0$ and $U_0<0$. The long-time asymptotic behaviors of the solution to initial value problem (\ref{InitialProblem}) in the different regions of the upper $(x, t)$-half plane are shown in Figure \ref{x0t-plane}, where the ``Region T" in the second quadrant represents the transition region that was firstly conjectured for KdV equation with Schwartz class initial condition conjectured in Ref. \cite{Trogdon-Olver-Deconinck-2012} by numerical inverse scattering approach. Here we believe that the “Region T" also appears in the case of KdV equation with delta-function initial profle. To the best of our knowledge,  the long-time asymptotics is not known in the ``Region T" until now, but Bilman, Deift, Kriecherbauer, Mclaughlin and Nenciu \cite{Bilman-Deift-Kriecherbauer-Mclaughlin-Nenciu-2024} are working on this topic now.

	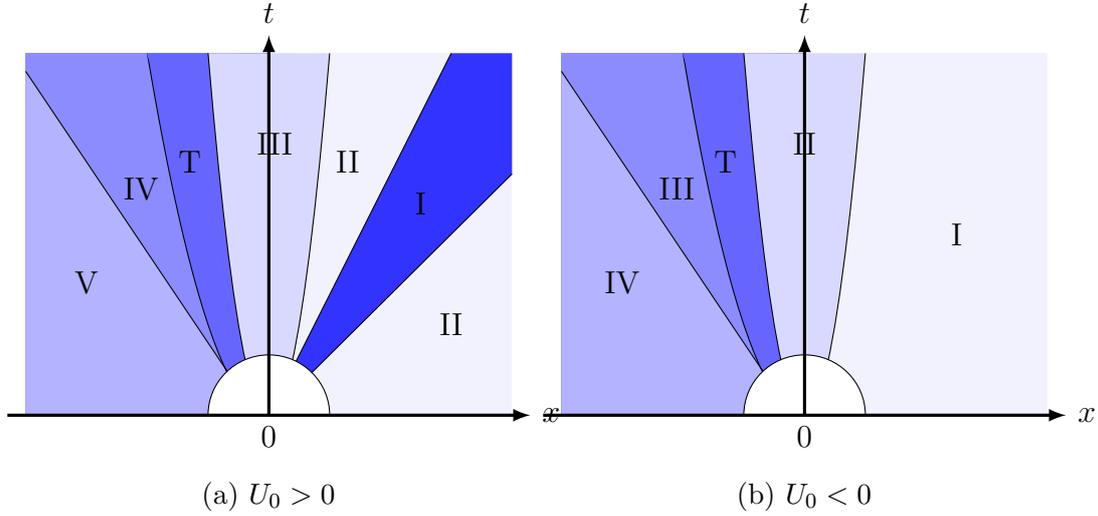
\begin{figure}[htpb]
		\centering
		\begin{subfigure}{0.45\textwidth}
			\begin{tikzpicture}[scale=0.8]
				\fill[blue!5] (0,0)--(4,0)--(4,6)--(0,6);
				\fill[blue!80](0,0)--(4,4)--(4,6)--(3,6);
				\fill[blue!45](-0,0)--(-4,0)--(-4,6)--(0,6);
				\fill[blue!30](-0.2,0)--(-4,0)--(-4,5.7);
				\draw[-](0,0)--(4,4);
				\draw[-](0,0)--(3,6);
				\draw[domain=-2:0,fill=blue!60]plot({\x},{1.5*\x*\x});
				\fill[blue!60](0,0)--(0,6)--(-2.01,6);
				\draw[domain=-1:1,fill=blue!15]plot({\x},{6*\x*\x});
				\draw[-](-0.2,0)--(-4,5.7);
				\node[]at(-2.1,3.75)[]{IV};
				\node[]at(-1.3,4.2)[]{T};
				\node[below]at(0,0){$0$};
				\node[]at(3,1.5)[]{II};
				\node[]at(1.3,4.2)[]{II};
				\node[]at(2.5,3.5)[]{I};
				\node[]at(0.1,4.5)[]{III};
				\node[]at(-3,2.2)[]{V};
				\fill[white](-1,0)--(1,0)arc[start angle=0,end angle=180,radius=1]--cycle;
				\draw (1,0) arc[start angle=0, end angle=180, radius=1];
				\draw[very thick,-latex] (-4.3,0) -- (4.3,0) node[right] {$x$};
				\draw[very thick,-latex] (0,0) -- (0,6.3) node[above] {$t$};
			\end{tikzpicture}
			\caption{$U_0>0$}
			\label{x0t-plane-U0>0}
		\end{subfigure}
		\begin{subfigure}{0.45\textwidth}
			\begin{tikzpicture}[scale=0.8]
				\fill[blue!5] (0,0)--(4,0)--(4,6)--(0,6);
				\fill[blue!45](-0,0)--(-4,0)--(-4,6)--(0,6);
				\fill[blue!30](-0.2,0)--(-4,0)--(-4,5.7);
				\draw[domain=-2:0,fill=blue!60]plot({\x},{1.5*\x*\x});
				\fill[blue!60](0,0)--(0,6)--(-2.01,6);
				\draw[domain=-1:1,fill=blue!15]plot({\x},{6*\x*\x});
				\draw[-](-0.2,0)--(-4,5.7);
				\node[]at(-2.1,3.75)[]{III};
				\node[]at(-1.3,4.2)[]{T};
				\node[below]at(0,0){$0$};
				\node[]at(2.5,3)[]{I};
				\node[]at(0,4.5)[]{II};
				\node[]at(-3,2.2)[]{IV};
				\fill[white](-1,0)--(1,0)arc[start angle=0,end angle=180,radius=1]--cycle;
				\draw (1,0) arc[start angle=0, end angle=180, radius=1];
				\draw[very thick,-latex] (-4.3,0) -- (4.3,0) node[right] {$x$};
				\draw[very thick,-latex] (0,0) -- (0,6.3) node[above] {$t$};
			\end{tikzpicture}
			\caption{$U_0<0$}
			\label{x0t-plane-U0<0}
		\end{subfigure}
		\caption{Regions for the long-time asymptotics of the KdV equation.}
		\label{x0t-plane}	
	\end{figure}
	
	\begin{theorem}
		\label{theorem-1} \rm
		For $U_0>0$, let $\epsilon>0$ be small enough and $l>0$ be an integer, then as $t\to \infty$, the long-time asymptotic behaviors of the initial value problem $(\ref{InitialProblem})$ are formulated in the following forms (see Figure \ref{x0t-plane-U0>0})
		\begin{itemize}
			\item[(i)] In the soliton region (Region I), i.e., $|x/t-U_0^2|<\epsilon$ and $x/t>C$ for some $C>0$, the long-time asymptotic solution is dominated by the simple soliton as
			\begin{equation}
				u(x,t)=-\frac{U_0^2}{2} \operatorname{sech}^2\left(\frac{1}{2}(U_0x-U_0^3t+\log 2)\right)+O(t^{-l}).
			\end{equation}
			\item[(ii)] In the decay region (Region II), i.e., $|x/t-U_0^2|\geq\epsilon$ and $x/t>C$ for some $C>0$, one has
			\begin{equation}
				u(x,t)=O(t^{-l}).
			\end{equation}
			\item[(iii)] In self-similar region (Region III), i.e., $\tau=tk_0^3<C$ with $k_0=\sqrt{-\frac{x}{12t}}$ for some $C>0$, the long-time asymptotic solution is related with the Painlev\'e II transcendent by the form
			\begin{equation}\label{iii-self-similar}
				u(x,t)=\frac{1}{(3t)^{2/3}}\left( y^{2}\left( \frac{x}{(3t)^{1/3}}\right) +y'\left( \frac{x}{(3t)^{1/3}}\right) \right) +O(t^{-2/3}),
			\end{equation}
			where $y(s)$ with $s=\frac{x}{(3t)^{1/3}}$ is the Ablowitz-Segur solution \cite{Segur-Ablowitz-1981} to Painlev\'e II equation
			\begin{equation}\label{PII-E}
				y''(s)-sy(s)-2y^3(s)=0,
			\end{equation}
			with Stokes constants $(1,-1,0)$. Notice that the leading-order term in asymptotic solution (\ref{iii-self-similar}) is analogous to the Miura transformation (\ref{Miura-mKdV}), which is reasonable because the self-similar region in the long-time asymptotics of the mKdV equation (\ref{mKdV})  subjected to Schwartz class initial condition can be effectively described by the solution of Painlev\'e II equation.
			
			\item[(iv)] In the collisionless shock region (Region IV), i.e., $x<0,\ C^{-1}<\frac{-x}{(3t)^{1/3}(\log t)^{2//3}}<C$ for some $C>1$, the long-time asymptotic solution is approximated as
			\begin{equation}
				u(x,t)\sim -\frac{2x}{3t}(A(\alpha)+B(\alpha)\operatorname{cn}^2(2K(\alpha)\theta+\theta_0;\alpha)),
			\end{equation}
			where $\operatorname{cn}(\cdot;\alpha)$ is the Jacobi cnoidal function of modulus $\alpha$ and the other parameters are
			\[\alpha=1-\frac{a^2}{b^2},\ A=\frac{1}{4}(b^2-1),\ B=\frac{1}{2}(1-b^2),\ a^2+b^2=2,\]
			\[\theta=\frac{12\tau}{\pi}\int_{0}^{a}\sqrt{(p^2-a^2)(p^2-b^2)}\dd p,\]
			\[\theta_0=K(\alpha)-\int_{1}^{\sqrt{b/a}}((p^2-1)(1-(a/b)^2p^2))^{-1/2}\dd p-\frac{1}{2\pi}\int_{-1}^{1}\frac{\log(2\gamma a^2p^2)}{((p^2-1)((a/b)^2p^2)-1)^{1/2}}\dd p,\]
			and $a,b$ are modulated by the following implicit equation
			\[\frac{\log k_0^2}{\tau}=-24\int_{a}^{b}\sqrt{(p^2-a^2)(b^2-p^2)}\dd p.\]
			
			\item[(v)] In the dispersive wave region (Region V), i.e., $x/t\leq -C$ for some $C>0$, the long-time asymptotic solution is expressed by
			\begin{equation}				u(x,t)=\sqrt{\frac{4\nu(k_0)k_0}{3t}}\sin(16tk_0^3-\nu(k_0)\log(192tk_0^3)+\phi(k_0))+O(t^{-\alpha}),
			\end{equation}
			where $1/2<\alpha<1$ and
			\begin{equation}				\phi(k_0)=\frac{\pi}{4}-\arg(r(k_0))+\arg(\Gamma(\ii\nu(k_0)))-2\ii\chi(k_0)+4\arctan\left(\frac{U_0}{2k_0} \right).
			\end{equation}
		\end{itemize}
	\end{theorem}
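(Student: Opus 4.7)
The plan is to apply the Deift-Zhou nonlinear steepest descent method region by region to the RH problem derived in Section 3. A crucial simplification afforded by the explicit rational reflection coefficient $r(k)=\ii U_0/(\ii U_0-2k)$ is that no analytic continuation via $\bar\partial$-extensions or rational approximation is required: $r(k)$ is meromorphic on $\C$ with a single simple pole at $k=\ii U_0/2$. For $U_0>0$ this pole lies in the upper half-plane and encodes the unique bound state $\kappa=U_0/2$ that produces the soliton. In every region the starting point is a fixed factorization of the jump matrix, combined with the phase $\theta(k)=4k^3+(x/t)k$ whose stationary points $k_0=\pm\sqrt{-x/(12t)}$ govern the contour deformation.

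For part (i) I would conjugate the RH problem by a triangular transformation that trades the residue condition at $k=\ii U_0/2$ for a jump on a small circle whose entries, under $|x/t-U_0^2|<\epsilon$ and $x/t>C$, reduce to essentially those of the one-soliton model problem; solving that model explicitly and reading off the $k^{-1}$-coefficient of the large-$k$ expansion via the KdV reconstruction formula yields the stated $\operatorname{sech}^2$ profile with the phase shift $\log 2$, while the jumps on $\R$ contribute only $O(t^{-l})$ because $\ee^{\pm 2\ii t\theta(k)}$ decays exponentially on the deformed contour for every $k\in\R$ once $x/t>C$. Part (ii) is then immediate: outside a neighborhood of the soliton velocity but still with $x/t>C$, the same conjugation leaves an RH problem whose jumps are all exponentially close to the identity, so the reconstruction formula gives $O(t^{-l})$.

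For part (iii) I would introduce the self-similar variable $s=x/(3t)^{1/3}$, rescale $k=\zeta/(3t)^{1/3}$, and show that after opening lenses at the coalesced stationary point $k=0$ the RH problem converges, uniformly for $s$ in compact sets, to the model problem characterizing the Ablowitz-Segur solution of Painlev\'e II with Stokes data $(1,-1,0)$; the value $r(0)=1$ read off from the explicit reflection coefficient matches these Stokes data exactly, and the Miura transformation recasts the resulting mKdV-like leading term as $y^2+y'$. Part (v) is the standard dispersive-wave analysis: the two real stationary points $\pm k_0$ are well separated, a Szeg\H{o}-type $\delta$-function built from $\log(1-|r|^2)$ on $(-k_0,k_0)$ diagonalizes the jump, lenses are opened, and two parabolic cylinder parametrices are matched, whose explicit asymptotics assemble into the $t^{-1/2}$-oscillation formula; the extra term $4\arctan(U_0/(2k_0))$ in $\phi(k_0)$ arises from the Blaschke-type factor $((k-\ii U_0/2)/(k+\ii U_0/2))^2$ one inserts to pre-remove the pole of $r$ before the steepest descent deformation.

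The main obstacle is part (iv), the collisionless shock region, where neither the Painlev\'e scaling nor the dispersive-wave scaling is valid and the logarithmic width $-x\asymp(3t)^{1/3}(\log t)^{2/3}$ must be captured. Here I would deploy the $g$-function mechanism: introduce a scalar $g(k)$ holomorphic off a pair of branch cuts ending at points $\pm a,\pm b$, chosen so that $\operatorname{Re}(2\ii t(\theta+g))$ is constant on each cut and strictly negative off the cuts in the relevant direction, and fix $a,b$ through the modulation equation $(\log k_0^2)/\tau=-24\int_a^b\sqrt{(p^2-a^2)(b^2-p^2)}\,\dd p$ together with $a^2+b^2=2$. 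The outer parametrix is then built from Riemann theta (equivalently Jacobi elliptic) functions on the associated genus-one curve, which after a standard reduction assumes the $\operatorname{cn}^2$ form with the stated $A(\alpha),B(\alpha),\theta$ and $\theta_0$, while four local Airy parametrices at the branch points match the outer model to the identity. Verifying the sign inequalities for $g$ uniformly across the thin shock window and extracting $\theta_0$ in closed form from the theta-function asymptotics is the most technical step; the remaining error reduces to a small-norm RH problem of subleading order.
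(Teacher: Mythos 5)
Your proposal follows essentially the same route as the paper for parts (i), (ii), (iii) and (v): exploiting the meromorphy of $r(k)=\ii U_0/(\ii U_0-2k)$ to skip analytic continuation, trading the residue conditions at $\pm\ii U_0/2$ for jumps on small circles and conjugating by the Blaschke-type factor $T(k)$ to obtain the one-soliton model and the $O(t^{-l})$ decay estimates, rescaling $k\mapsto (3t)^{-1/3}k$ and matching with the Painlev\'e II parametrix with Stokes data $(p,q,r)=(r(0),-r(0),0)=(1,-1,0)$, and running the standard two-stationary-point analysis with a Szeg\H{o} function and parabolic cylinder parametrices, the Blaschke factor contributing the $4\arctan(U_0/(2k_0))$ phase shift. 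Two remarks. First, in part (iii) you should say explicitly how the pole is disposed of: the paper first conjugates by the explicit one-soliton solution $M^{(\mathrm{sol})}$ and shows its contribution is $O(\ee^{-ct})$ in the self-similar region before rescaling; without some such step the rescaled RH problem still carries residue conditions. (Also, the $y^2+y'$ structure falls out directly from the $k^{-1}$ coefficient of the Painlev\'e parametrix via the reconstruction formula; no appeal to the Miura map is needed.) Second, and more substantively, the paper states part (iv) \emph{without proof} (see the remark after Theorem \ref{theorem-2}), deferring to the collisionless-shock literature, whereas you sketch a genuine $g$-function argument with an elliptic outer parametrix and Airy local parametrices. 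That sketch points in the right direction (it is the Deift--Venakides--Zhou mechanism), but as written it is only an outline: the existence of a $g$-function satisfying the sign conditions uniformly over the logarithmically thin window $-x\asymp (3t)^{1/3}(\log t)^{2/3}$, and the closed-form extraction of $\theta_0$ from the theta-function asymptotics, are exactly the hard steps and are not carried out, so part (iv) remains unproven in your proposal just as it does in the paper.
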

	
	When the parameter $U_0<0$, it can be shown that the Schr\"{o}dinger operator $\mathcal{L}=\frac{\partial^2}{\partial x^2}-u$ with $u=-U_0\delta(x)$ does not support discrete spectrum and the soliton region is absent in this case. Thus the following theorem holds.
	
	\begin{theorem}\label{theorem-2} \rm
		For $U_0<0$, let $l>0$ be an integer, then as $t\to \infty$, the long-time asymptotic behaviors of the initial value problem $(\ref{InitialProblem})$ are formulated in the following forms (see Figure \ref{x0t-plane-U0<0})
		\begin{itemize}
			\item[(i)] In the decay region (Region I), $x/t>C$ for some $C>0$, one has
			\begin{equation}
				u(x,t)=O(t^{-l}).
			\end{equation}
			\item[(ii)] In self-similar region (Region II), i.e., $\tau=tk_0^3<C$ with $k_0=\sqrt{-\frac{x}{12t}}$ for some $C>0$, the long-time asymptotic solution is related with the Painlev\'e II transcendent by the form
			\begin{equation}
				u(x,t)=\frac{1}{(3t)^{2/3}}\left( y^{2}\left( \frac{x}{(3t)^{1/3}}\right) +y'\left( \frac{x}{(3t)^{1/3}}\right) \right) +O(t^{-2/3}),
			\end{equation}
			where $y(s)=y(s;1,-1,0)$ is the Ablowitz-Segur solution \cite{Segur-Ablowitz-1981} to the Painlv\'e II equation (\ref{PII-E}).
			\item[(iii)] In the collisionless shock region (Region III), i.e., $x<0,\ C^{-1}<\frac{-x}{(3t)^{1/3}(\log t)^{2//3}}<C$ for some $C>1$, the long-time asymptotic solution is approximated as
			\begin{equation}
				u(x,t)\sim -\frac{2x}{3t}(A(\alpha)+B(\alpha)\operatorname{cn}^2(2K(\alpha)\theta+\theta_0;\alpha)),
			\end{equation}
			where $\operatorname{cn}(\cdot;\alpha)$ is the Jacobi cnoidal function of modulus $\alpha$ the other parameters are
			\[\alpha=1-\frac{a^2}{b^2},\ A=\frac{1}{4}(b^2-1),\ B=\frac{1}{2}(1-b^2),\ a^2+b^2=2,\]
			\[\theta=\frac{12\tau}{\pi}\int_{0}^{a}\sqrt{(p^2-a^2)(p^2-b^2)}\dd p,\]
			\[\theta_0=K(\alpha)-\int_{1}^{\sqrt{b/a}}((p^2-1)(1-(a/b)^2p^2))^{-1/2}\dd p-\frac{1}{2\pi}\int_{-1}^{1}\frac{\log(2\gamma a^2p^2)}{((p^2-1)((a/b)^2p^2)-1)^{1/2}}\dd p,\]
			and $a,b$ are modulated by the following implicit equation
			\[\frac{\log k_0^2}{\tau}=-24\int_{a}^{b}\sqrt{(p^2-a^2)(b^2-p^2)}\dd p.\]
			\item[(iv)] In the dispersive wave region (Region IV), i.e., $x/t\leq -C$ for some $C>0$, the long-time asymptotic solution is expressed by
			\begin{equation}				u(x,t)=\sqrt{\frac{4\nu(k_0)k_0}{3t}}\sin(16tk_0^3-\nu(k_0)\log(192tk_0^3)+\phi(k_0))+O(t^{-\alpha}),
			\end{equation}
			where
			\begin{equation}
				\phi(k_0)=\frac{\pi}{4}-\arg(r(k_0))+\arg(\Gamma(\ii\nu(k_0)))-2\ii\chi(k_0).
			\end{equation}
		\end{itemize}
	\end{theorem}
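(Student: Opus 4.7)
The plan is to apply the Deift-Zhou nonlinear steepest descent method to the RH problem that Section 3 associates with the initial datum $-U_0\delta(x)$, $U_0<0$, adapting the four region-by-region deformations behind Theorem \ref{theorem-1} to the simpler setting here. The defining feature of the present case is that the Schr\"odinger operator $\mathcal{L}$ with potential $-U_0\delta(x)$ has no bound states when $U_0<0$, so the discrete spectrum is empty and the whole scattering data reduce to the reflection coefficient $r(k)=\ii U_0/(\ii U_0-2k)$ in (\ref{r-k-one-delta}). Its only singularity $k=\ii U_0/2$ lies in the \emph{lower} half-plane, hence $r(k)$ is meromorphic on $\C$ and analytic on the closed upper half-plane; every contour deformation into the upper half-plane can therefore be carried out with the explicit formula for $r(k)$, so the analytic-continuation step needed for Schwartz-class reflection coefficients is entirely bypassed. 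Conceptually, Theorem \ref{theorem-2} is Theorem \ref{theorem-1} with the soliton region and its adjacent decay region deleted, since the bound state that generates the soliton is exactly the upper-half-plane pole of $r(k)$ that is absent here.

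With KdV phase $\theta(k;x,t)=4k^3+(x/t)k$, the saddles $\pm k_0$ with $k_0=\sqrt{-x/(12t)}$ are purely imaginary when $x>0$ and real when $x<0$, and the four regions in Figure \ref{x0t-plane-U0<0} capture the qualitatively distinct saddle configurations. For Region I, I would conjugate the RH problem by a lower-triangular factor and deform $\R$ upward into the half-plane where $\ee^{2\ii t\theta}$ decays; analyticity of $r(k)$ there makes the resulting jump exponentially small and yields the $O(t^{-l})$ estimate for every $l$. For Region IV, I would open lenses along the steepest-descent contours through $\pm k_0$, conjugate by the Deift-Zhou $\delta$-function that absorbs the diagonal factor $(1-\abs{r}^2)$ on $(-k_0,k_0)$, localize near each saddle, and solve the two model problems by parabolic cylinder parametrices; the stated oscillatory formula then follows. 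The additional $4\arctan(U_0/(2k_0))$ term appearing in Theorem \ref{theorem-1}(v) is absent here because no Blaschke factor is needed to remove a bound state.

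For Region II, the saddles collide at the origin and the natural rescaling is $k=\kappa/(3t)^{1/3}$. Invoking the Miura transformation (\ref{Miura-mKdV}) as suggested in the introduction reduces the local analysis to the self-similar model for the mKdV equation (\ref{mKdV}); this model problem is solved by the Ablowitz-Segur Painlev\'e II transcendent, and the Stokes triple $(1,-1,0)$ is determined by the boundary value of $r(k)$ at the origin obtained from (\ref{r-k-one-delta}). For Region III, I would implement the $g$-function mechanism of Deift-Venakides-Zhou: construct a genus-one $g$-function whose branch points $\pm a,\pm b$ are pinned by the modulation relation $\log k_0^2/\tau=-24\int_a^b\sqrt{(p^2-a^2)(b^2-p^2)}\,\dd p$, conjugate the RH problem so that the new jump is purely oscillatory on the band $[a,b]$ and exponentially close to the identity elsewhere, and solve the resulting model problem via Jacobi theta functions. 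The logarithmic phase shift $\theta_0$ and the $\log t$ scaling of the region itself emerge from matching local Airy parametrices at the four band edges against the outer theta-function parametrix.

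The main technical obstacle will be Region III, where the genus-one modulation, the four Airy parametrices at $\pm a,\pm b$, and the $\log t$ corrections must be synchronized to extract the elliptic leading order. Regions I, II and IV are expected to follow from routine adaptations of the steepest-descent arguments already developed for Theorem \ref{theorem-1}, simplified further by the absence of discrete spectrum and by the rational form of $r(k)$, which trivializes every analytic continuation that would otherwise require a Cauchy integral or a deformation in an exponentially decaying direction.
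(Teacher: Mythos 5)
Your proposal is correct in substance and, for the regions the paper actually proves, follows essentially the same route: the paper's treatment of $U_0<0$ consists precisely of observing that $\tilde M$ has no poles off $\R$, dropping the Blaschke factor $\frac{k+k_p}{k-k_p}$ from $T(k)$ in $(\ref{T(k)})$, replacing the jumps on $\Gamma_\pm$ by the identity, and rerunning the $U_0>0$ analysis of Section \ref{section-proofs}; your remark that the $4\arctan\left(\frac{U_0}{2k_0}\right)$ phase disappears exactly because no Blaschke factor is needed is the paper's own conclusion. Two differences are worth flagging. First, for the self-similar region you propose to pass through the Miura transformation $(\ref{Miura-mKdV})$ to an mKdV model; the paper's actual proof never does this --- it rescales $k\mapsto(3t)^{-1/3}k$ in the KdV RH problem directly and matches the resulting jumps to the Painlev\'e II model problem of Theorem \ref{Painleve-II model} with Stokes data $(p,q,r)=(r(0),-r(0),0)=(1,-1,0)$, the Miura formula appearing only as an a posteriori interpretation of the leading term. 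Using Miura as a genuine proof device would require producing mKdV initial data whose Miura image is $-U_0\delta(x)$ and controlling that singular correspondence, which is avoidable; your own description of the rescaling and of the Stokes constants read off from $r(0)=1$ already contains the direct argument, so you should drop the Miura detour. Second, you sketch a full Deift--Venakides--Zhou $g$-function and theta-function construction for the collisionless shock region, whereas the paper explicitly states that region without proof (see the remark following Theorem \ref{theorem-2}); carrying your Region III plan out would therefore be a genuine addition to the paper rather than a reproduction of anything in it, and it is also where all the real technical work (genus-one modulation, Airy parametrices at the four band edges, the $\log t$ scaling) would lie.
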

	\begin{remark}
		We only present the asymptotic expressions of the collisionless shock region, i.e., the Region IV in Theorem \ref{theorem-1} and Region III in Theorem \ref{theorem-2} without proof. Moreover, so far, there have been no results related to the ``Region T" in Figure \ref{x0t-plane}.
	\end{remark}
	\par
	Figure \ref{U02T50v20} and Figure \ref{U02T50v20-2} show the comparisons of theoretical results in Theorem \ref{theorem-1} with the direct numerical simulations in different regions for parameter $U_0=2$ and time $t=50$. Figure \ref{U02T50v20} gives an overall comparison illustration, while Figure \ref{U02T50v20-2} shows the comparison diagrams in four different regions which are soliton region, self-similar region, collisionless shock region and dispersive wave region. It is observed that the results from asymptotic analysis are in very good agreement with the numerical simulations, which demonstrates the correctness of the theoretical calculations.
	\begin{figure}[H]
		\centering
		\includegraphics[scale=0.7]{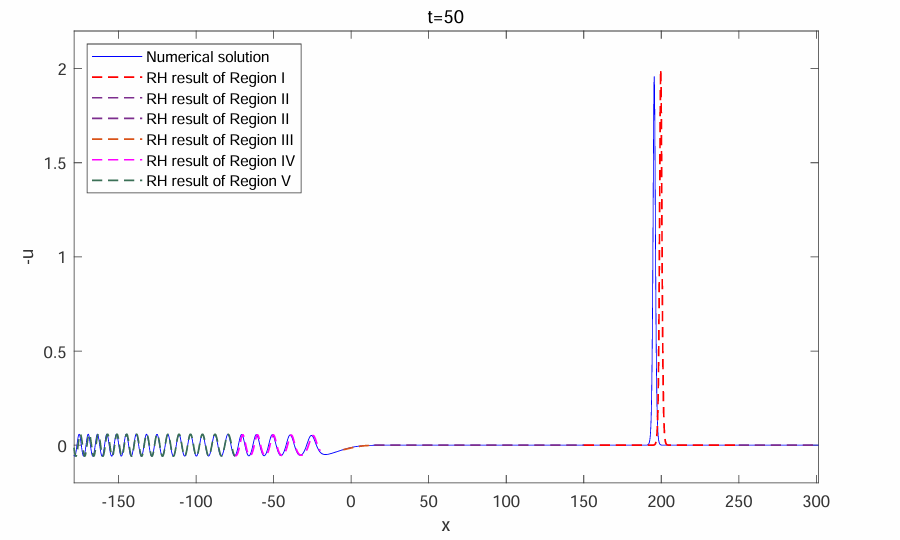}
		\caption{The comparisons of the leading-order terms in Theorem \ref{theorem-1} with the result of numerical simulation for $U_0=2$ and $t=50$, where the solid line represents the result of numerical simulation and the dashed lines represent the result of asymptotic analysis. }
		\label{U02T50v20}
	\end{figure}
	\begin{figure}[H]
		\centering
		\begin{subfigure}{0.45\textwidth}
			\includegraphics[scale=0.4]{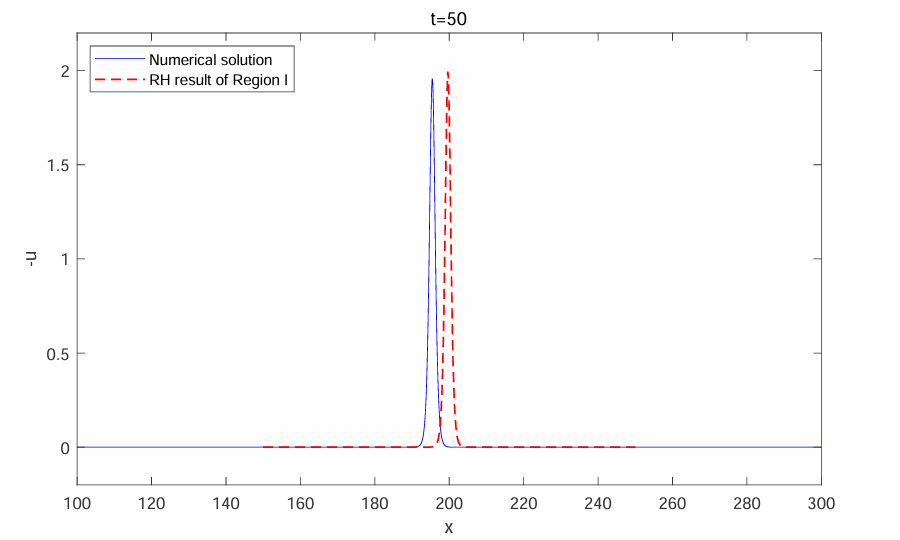}
			\caption{Soliton Region.}
			\label{U02T50Sl}
		\end{subfigure}
		\begin{subfigure}{0.45\textwidth}
			\includegraphics[scale=0.4]{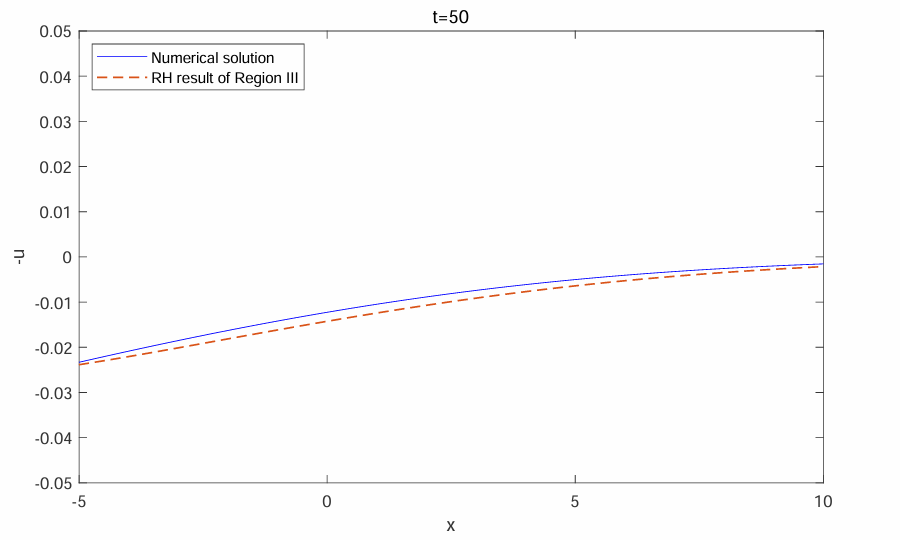}
			\caption{Self-similar Region.}
			\label{}
		\end{subfigure}
		\begin{subfigure}{0.45\textwidth}
			\includegraphics[scale=0.4]{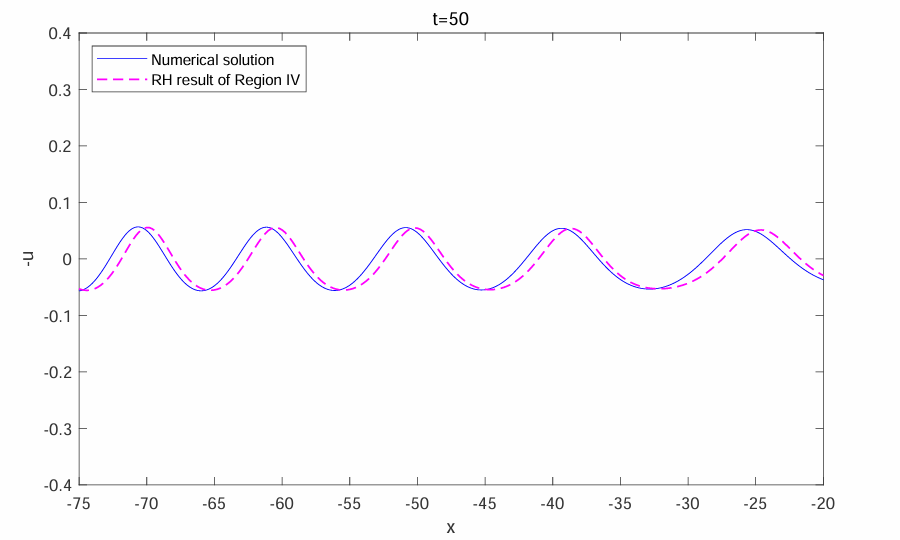}
			\caption{Collisionless shock Region.}
			\label{}
		\end{subfigure}
		\begin{subfigure}{0.45\textwidth}
			\includegraphics[scale=0.4]{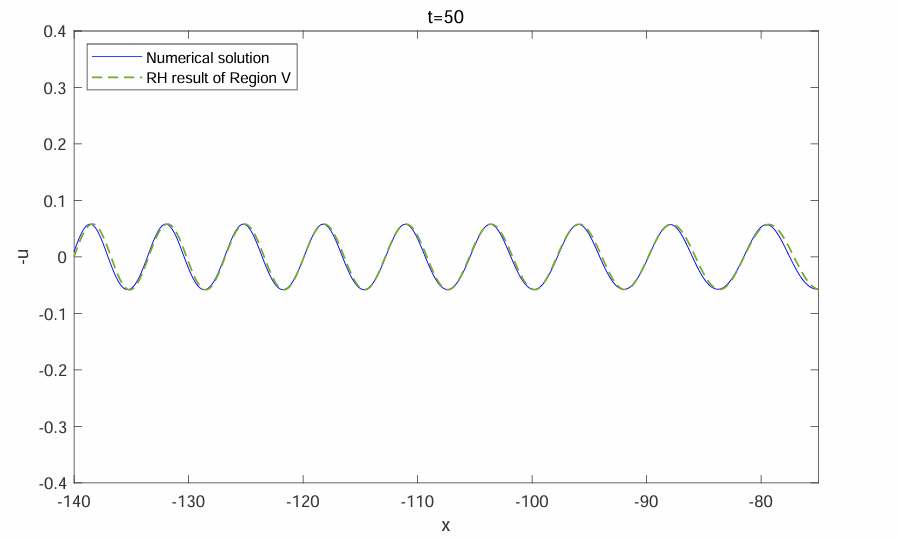}
			\caption{Dispersive wave Region.}
			\label{}
		\end{subfigure}
		\caption{The individual comparisons of the leading-order terms in Theorem \ref{theorem-1} with the result of the numerical simulation in four different regions which are soliton region, self-similar region, collisionless shock region and dispersive wave region, where the parameter is $U_0=2$ and the time is $t=50$. }
		\label{U02T50v20-2}
	\end{figure}
	
	\begin{remark}
		As depicted in Figure \ref{U02T50v20} and Figure \ref{U02T50Sl}, the numerical comparisons within the soliton region exhibit notable discrepancies. This divergence arises primarily from the inherent limitations of computational processing. Specifically, computers are confined to handling finite values, whereas the delta function $\delta(x)$ is characterized by its infinite value at the origin. Consequently, in numerical simulations, it is both practical and reasonable to approximate the delta function $\delta(x)$ using an extremely narrow rectangular potential barrier. This approximation effectively bridges the gap between theoretical constructs and computational feasibility, thereby enhancing the accuracy and reliability of the simulation results.
		
	\end{remark}
	
	\begin{figure}[H]
		\centering
		\includegraphics[scale=0.7]{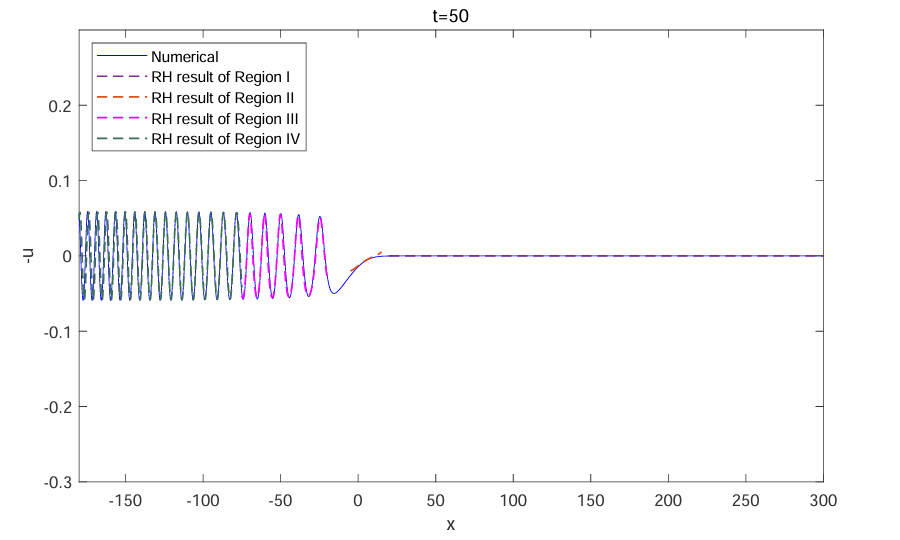}\label{}
		\caption{The comparisons of the leading-order terms in Theorem \ref{theorem-1} with the result of numerical simulation for $U_0=-2$ and $t=50$, where the solid line represents the result of numerical simulation and the dashed lines represent the result of asymptotic analysis.}
	\end{figure}
	\begin{figure}[H]
		\centering
		\begin{subfigure}{0.45\textwidth}
			\includegraphics[scale=0.4]{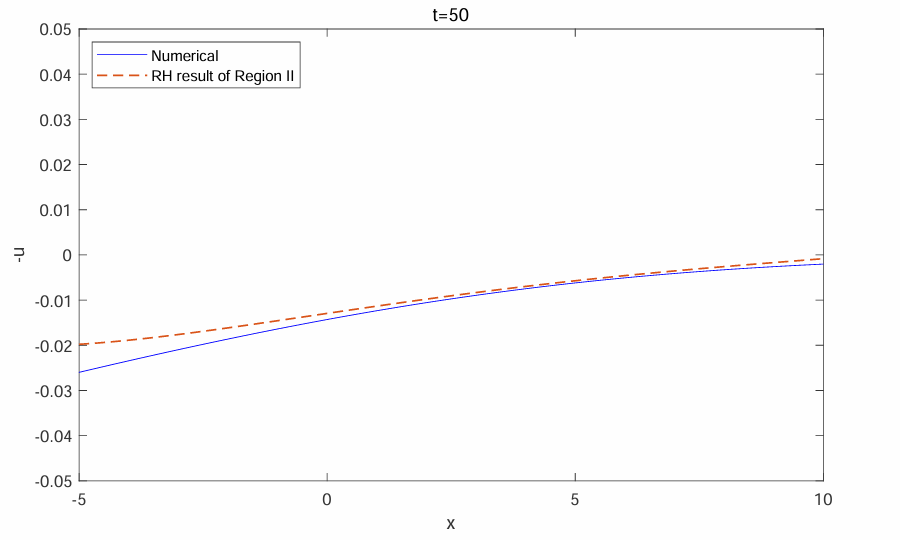}
			\caption{Self-similar Region.}
			\label{}
		\end{subfigure}
		\begin{subfigure}{0.45\textwidth}
			\includegraphics[scale=0.4]{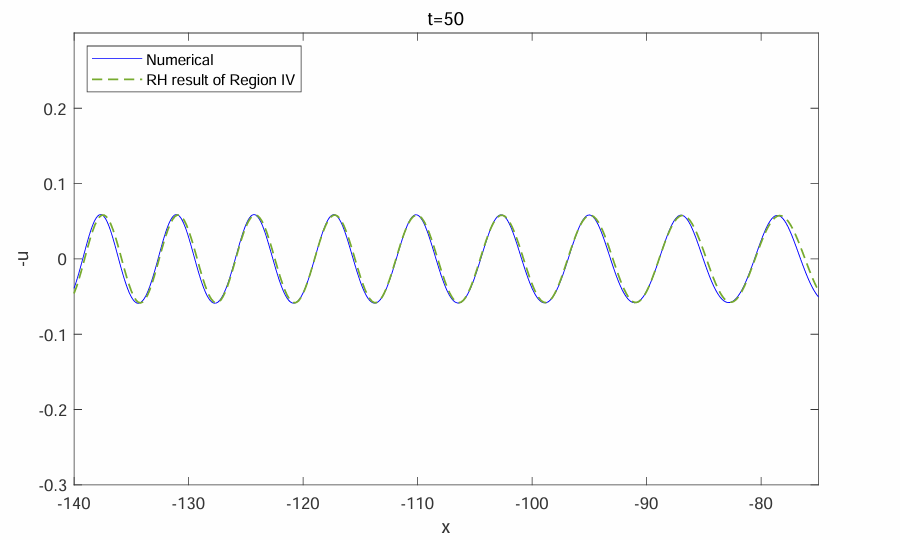}
			\caption{Dispersive wave Region.}
			\label{}
		\end{subfigure}
		\caption{The individual comparisons of the leading-order terms in Theorem \ref{theorem-1} with the result of the numerical simulation in four different regions which are dispersive wave region and self-similar region, where the parameter is $U_0=-2$ and the time is $t=50$.}
	\end{figure}

	\section{Inverse Scattering Transform and RH Problem}
	
	The KdV equation in initial problem (\ref{InitialProblem}) admits the linear spectrum problem, i.e., Lax pair
	\begin{equation}\label{Lax-pair}
		\begin{cases}
			\Psi_x=\tilde{L}\Psi,\\
			\Psi_t=\tilde{A}\Psi,
		\end{cases}
	\end{equation}
	where the matrices $\tilde{L}$ and $\tilde{A}$ are
	\begin{equation}
		\tilde{L}=\begin{pmatrix}
			0&1\\
			\lambda+u&0
		\end{pmatrix},\quad
		\tilde{A}=\begin{pmatrix}
			-u_x&2u-4\lambda\\
			-u_{xx}-(\lambda+u)(4\lambda-2u)&u_x
		\end{pmatrix}.
	\end{equation}
	The compatibility condition $\tilde{L}_t-\tilde{A}_x+[\tilde{L},\tilde{A}]=0$ yields the KdV equation.
	\par
	Let $\lambda=-k^2$ and take
	\begin{equation}
		J=\begin{pmatrix}
			\ii&-\ii\\
			k & k
		\end{pmatrix},
	\end{equation}
	then $\det(J)=2\ii k$. When $k\neq0$, let $\Psi=J\Phi$ then the Lax pair (\ref{Lax-pair}) becomes
	\begin{equation}
		\begin{cases}
			\Phi_x=L\Phi,\\
			\Phi_t=A\Phi,
		\end{cases}
		\label{lax-Phi}
	\end{equation}
	where $L:=J^{-1}\tilde{L}J$ and $A:=J^{-1}\tilde{A}J$. Direct calculations follow that
	\begin{equation}
		\begin{cases}
			L=-\ii k\sigma_3+k^{-1}L_{-1},\\
			A=-4\ii k^3\sigma_3+kA_1+A_0+k^{-1}A_{-1},
		\end{cases}
	\end{equation}
	where
	\begin{equation}
		L_{-1}=\frac{\ii u}{2}Q,\quad A_{1}=2u\sigma_2,\quad
		A_{0}=u_x\sigma_1,\quad
		A_{-1}=(\ii u^2-\frac{\ii}{2} u_{xx})Q,
	\end{equation}
	and
	\begin{equation}
		Q=\begin{pmatrix}
			1&-1\\
			1&-1
		\end{pmatrix},\quad
		\sigma_1=\begin{pmatrix}
			0&1\\
			1&0
		\end{pmatrix},\quad
		\sigma_2=\begin{pmatrix}
			0&-\ii\\
			\ii&0
		\end{pmatrix},\quad
		\sigma_3=\begin{pmatrix}
			1&0\\
			0&-1
		\end{pmatrix}.
	\end{equation}
	\par
	Define the Jost function
	\begin{equation}
		N(x,t,k)=\Phi(x,t,k) \ee^{(\ii kx+4\ii k^3t)\sigma_3},
	\end{equation}
	then $N(x,t,k)\to I$ as $|x|\to \infty$, and $N$ satisfies the new linear spectrum problem
	\begin{equation}
		\begin{cases}
			N_x+\ii k[\sigma_3,N]=k^{-1}L_{-1}N=:UN,\\
			N_t+4\ii k^3[\sigma_3,N]=(kA_1+A_0+k^{-1}A_{-1})N=:VN,
		\end{cases}
	\end{equation}
	which indicates that
	\begin{equation}
		\dd (\ee^{(\ii kx+4\ii k^3t)\hat{\sigma}_3}N(x,t,k))={\ee}^{(\ii kx+4\ii k^3t)\hat{\sigma}_3}((U\dd x+V\dd t)N(x,t,k)).
	\end{equation}
	Integrating this total derivative along $(-\infty,x)$ and $(x,\infty)$ and using the asymptotics of the function $N(x,t,k)$, yield the Volterra integral equations
	\begin{equation}
		\begin{aligned}
			N_l(x,t,k)&=I+\int_{-\infty}^{x}{\ee}^{-\ii k(x-s)\hat{\sigma}_3}(U(s,t,k)N_l(s,t,k))\dd s,\\
			N_r(x,t,k)&=I-\int_{x}^{\infty}{\ee}^{-\ii k(x-s)\hat{\sigma}_3}(U(s,t,k)N_r(s,t,k))\dd s,
		\end{aligned}
		\label{volterra-integral-equations}
	\end{equation}
	which can be solved in the following proposition.
	
	\subsection{The Jost function for $u(x,0)=-U_0\delta(x)$}
	
	In this subsection, the Jost functions $N_l$ and $N_r$ are solved with initial data $u(x,0)=-U_0\delta(x)$ by using the Volterra integral equations (\ref{volterra-integral-equations}). The more general case of $u(x,0)=-\sum_{n=1}^LU_n\delta_n(x)$ in (\ref{L-delta-intial}) will be considered in Section \ref{section-general}.
	\begin{Proposition}
		For the initial condition $u(x,0)=-U_0\delta(x)$, the functions $N_l(x,0,k)$ and $N_r(x,0,k)$ in (\ref{volterra-integral-equations}) can be solved by
		\begin{equation}\label{volterra-solution-1}
			N_l(x,0,k)=\begin{cases}
				I,&x\leq 0,\\
				I-\frac{\ii U_0}{2k}{\ee}^{-\ii kx\hat{\sigma}_3}Q,&x>0.
			\end{cases}
		\end{equation}
		\begin{equation}\label{volterra-solution-2}
			N_r(x,0,k)=\begin{cases}
				I+\frac{\ii U_0}{2k}{\ee}^{-\ii kx\hat{\sigma}_3}Q,&x<0,\\
				I,&x\geq0.
			\end{cases}
		\end{equation}
		\label{t=0-jost-function}
	\end{Proposition}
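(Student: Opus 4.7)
The plan is to exploit the fact that at $t=0$ the matrix coefficient in the Volterra kernel is a distribution supported only at $s=0$, so that the integral equations (\ref{volterra-integral-equations}) collapse to elementary boundary evaluations by the sifting property of the delta function.

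First I substitute $u(x,0)=-U_0\delta(x)$ into the spatial coefficient from (\ref{volterra-integral-equations}),
\[
U(s,0,k)=k^{-1}L_{-1}(s,0)=\frac{\ii u(s,0)}{2k}Q=-\frac{\ii U_0\delta(s)}{2k}Q,
\]
so that the integrand becomes $-\frac{\ii U_0\delta(s)}{2k}\,\ee^{-\ii k(x-s)\hat{\sigma}_3}(Q\,N_l(s,0,k))$. For $N_l$ the integration range is $(-\infty,x)$: if $x\leq 0$ the support $s=0$ lies outside this range and the integral vanishes, giving $N_l(x,0,k)=I$; if $x>0$ the delta activates at $s=0$ and the integral collapses to $-\frac{\ii U_0}{2k}\,\ee^{-\ii kx\hat{\sigma}_3}(Q\,N_l(0,0,k))$. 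Since the Volterra iteration for $N_l$ advances in increasing $s$, the boundary value to insert is the left limit $N_l(0^-,0,k)=I$, which produces the second branch of (\ref{volterra-solution-1}).

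The argument for $N_r$ is the mirror image. When $x\geq 0$ the range $(x,\infty)$ excludes the support of the delta and $N_r(x,0,k)=I$; when $x<0$ the delta activates, and using the right limit $N_r(0^+,0,k)=I$ closes the computation to yield the first branch of (\ref{volterra-solution-2}). A direct substitution back into (\ref{volterra-integral-equations}) then verifies that both piecewise expressions solve the integral equations.

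The only subtle point---and hence the main obstacle---is the interpretation of the delta function precisely at the endpoint $s=0$. The Volterra structure forces a one-sided convention (left limit for $N_l$, right limit for $N_r$), and this is exactly what produces the jump of the Jost functions across the origin, reflecting the standard boundary conditions for a point interaction in the Schr\"odinger operator $\partial_x^2-u$. Once this convention is fixed, all remaining manipulations are purely algebraic.
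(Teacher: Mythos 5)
Your proposal is correct and follows essentially the same route as the paper: substitute the delta potential into the Volterra equations, observe that the integral vanishes when the support of $\delta$ lies outside the integration range, and collapse it by the sifting property otherwise, with the value $N_l(0,0,k)=I$ being the one delicate point. Where you fix that endpoint value by appealing to the one-sided convention forced by the Volterra structure, the paper makes the identical choice explicit by integrating by parts against the Heaviside function $H$ with $H(0)=0$, so the boundary term at $s=0$ drops out and $N_l(0,0,k)=I$ follows; the two justifications are equivalent.
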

	\begin{proof}
		When $x=0$, integrating by parts we have
		\begin{equation}
			\begin{aligned}
				N_l(0,0,k)&=I+\int_{-\infty}^{0}{\ee}^{\ii ks\hat{\sigma}_3}(U(s,0,k)N_l(s,0,k))\dd s\\
				&=I-\frac{\ii U_0}{2k}\int_{-\infty}^{0}\delta(s){\ee}^{\ii ks\hat{\sigma}_3}(QN_l(s,0,k))\dd s\\
				&=I-\frac{\ii U_0}{2k}\left. H(s){\ee}^{\ii ks\hat{\sigma}_3}(QN_l(s,0,k))\right|_{s=-\infty}^{s=0}+\frac{\ii U_0}{2k}\int_{-\infty}^{0}H(s)\dd({\ee}^{\ii ks\hat{\sigma}_3}(QN_l(s,0,k)))\\
				&=I,
			\end{aligned}
		\end{equation}
		where $H(s)$ is Heaviside function satisfying $H(s)=0,x\leq0,H(s)=1,x>0$, and $H^{\prime}(s)=\delta(s)$.
		\par	
		When $x>0$, reminding the properties of the delta function $\delta(x)$, it is found that
		\begin{equation}
			\begin{aligned}
				N_l(x,0,k)&=I+\int_{-\infty}^{x}{\ee}^{-\ii k(x-s)\hat{\sigma}_3}(U(s,0,k)N_l(s,0,k))\dd s\\
				&=I+\int_{-x}^{x}{\ee}^{-\ii k(x-s)\hat{\sigma}_3}(U(s,0,k)N_r(s,0,k))\dd s\\
				&=I-\frac{\ii U_0}{2k}\int_{-x}^{x}\delta(s){\ee}^{-\ii k(x-s)\hat{\sigma}_3}(QN_l(s,0,k))\dd s\\
				&=I-\left.\frac{\ii U_0}{2k}{\ee}^{-\ii k(x-s)\hat{\sigma}_3}(QN_l(s,0,k))\right|_{s=0}\\
				&=I-\frac{\ii U_0}{2k}{\ee}^{-\ii kx\hat{\sigma}_3}(QN_l(0,0,k))\\
				&=I-\frac{\ii U_0}{2k}{\ee}^{-\ii kx\hat{\sigma}_3}Q.
			\end{aligned}
		\end{equation}
		\par	
		When $x<0$, it follows that
		\begin{equation}
			\begin{aligned}
				N_l(x,0,k)&=I+\int_{-\infty}^{x}{\ee}^{-\ii k(x-s)\hat{\sigma}_3}(U(s,0,k)N_l(s,0,k))\dd s\\
				&=I-\frac{\ii U_0}{2k}\int_{-\infty}^{x}\delta(s){\ee}^{-\ii k(x-s)\hat{\sigma}_3}(QN_l(s,0,k))\dd s\\
				&=I.
			\end{aligned}
		\end{equation}
		\par
		In this similar way, the solution $N_r(x,0,k)$ in (\ref{volterra-solution-2}) of the Volterra integral equation can also be derived.
	\end{proof}
	\par	
	Since $\Phi_l(x,t,k)=N_l(x,t,k)\ee^{-(\ii kx+4\ii k^3t)\sigma_3}$ and $\Phi_r(x,t,k)=N_r(x,t,k)\ee^{-(\ii kx+4\ii k^3t)\sigma_3}$ are two linearly dependent solutions of the Lax pair $(\ref{lax-Phi})$, there exists a matrix $S(t,k)=(s_{ij}(t,k))_{2\times2}$ for $t\geq 0$ and $k\in\C\backslash\{0\}$, which is independent of $x$, such that
	\begin{equation}
		\Phi_l(x,t,k)=\Phi_r(x,t,k)S(t,k),
	\end{equation}
	where $S(t,k)$ is expressed by
	\begin{equation}
		S(t,k)={\ee}^{\ii (kx+4k^3t)\hat{\sigma}_3}(N_r^{-1}(x,t,k)N_l(x,t,k)).
		\label{scattering-matrax}
	\end{equation}
	\par
	Define the reflection coefficient
	\[r(t,k)=\frac{s_{21}(t,k)}{s_{11}(t,k)},\]
	then the scattering matrix $S(t,k)$ and the reflection coefficient $r(t,k)$ for $t=0$ can be obtained in the following proposition explicitly.
	
	\begin{Proposition}
		[Scattering data]
		For the initial condition $u(x,0)=-U_0\delta(x)$, it is immediate that
		\begin{equation}
			S(0,k)=I-\frac{\ii U_0}{2k}Q,\quad k\in\C\backslash\{0\},
		\end{equation}
		\begin{equation}
			r(0,k)=\frac{\ii U_0}{\ii U_0-2k} ,\quad k\in\C\backslash\{k_p\},
			\label{r(k)}
		\end{equation}
		where the pole of the reflection coefficient $r(0,k)$ is $k_p=\frac{\ii U_0}{2}.$
	\end{Proposition}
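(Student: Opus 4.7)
The plan is to substitute the explicit Jost functions obtained in Proposition \ref{t=0-jost-function} into the definition (\ref{scattering-matrax}) of the scattering matrix at $t=0$. Because $S(0,k)$ is independent of $x$ by construction, I am free to evaluate the right-hand side at any convenient $x$; the cleanest choice is $x>0$, where $N_r(x,0,k)=I$ requires no inversion and $N_l(x,0,k)=I-\frac{\ii U_0}{2k}\ee^{-\ii kx\hat{\sigma}_3}Q$. Plugging these into (\ref{scattering-matrax}) and using that the outer conjugation $\ee^{\ii kx\hat{\sigma}_3}$ precisely cancels the inner factor $\ee^{-\ii kx\hat{\sigma}_3}$ acting on $Q$, the $x$-dependence drops out and one immediately reads off $S(0,k)=I-\frac{\ii U_0}{2k}Q$.

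As a consistency check, the same calculation can be performed on the opposite side $x<0$, where now $N_l=I$ while $N_r(x,0,k)=I+\frac{\ii U_0}{2k}\ee^{-\ii kx\hat{\sigma}_3}Q$ must be inverted. The key observation here is that $Q$ is nilpotent, $Q^2=0$, a property preserved under similarity transformation by $\ee^{-\ii kx\sigma_3}$; hence $N_r^{-1}=I-\frac{\ii U_0}{2k}\ee^{-\ii kx\hat{\sigma}_3}Q$, and the same $S(0,k)$ is recovered after the conjugations collapse. This both verifies the formula and internally cross-checks the piecewise expressions in Proposition \ref{t=0-jost-function}.

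Finally, writing out the $2\times 2$ matrix $I-\frac{\ii U_0}{2k}Q$ explicitly from the definition of $Q$, one finds $s_{11}(0,k)=1-\frac{\ii U_0}{2k}=\frac{2k-\ii U_0}{2k}$ and $s_{21}(0,k)=-\frac{\ii U_0}{2k}$, so that $r(0,k)=s_{21}/s_{11}$ simplifies to $\frac{\ii U_0}{\ii U_0-2k}$ with a unique simple pole at $k_p=\ii U_0/2$. The only real obstacle is careful bookkeeping of the adjoint action of $\hat{\sigma}_3$ and a correct use of the nilpotency of $Q$; no genuine analytical difficulty arises. As a sanity remark, $k_p$ lies in the upper half-plane exactly when $U_0>0$, which is consistent with the presence (respectively absence) of a bound state, and hence of a soliton region, in Theorem \ref{theorem-1} (respectively Theorem \ref{theorem-2}).
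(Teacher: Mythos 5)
Your proposal is correct and follows essentially the same route as the paper, whose proof is simply the instruction to combine the definition (\ref{scattering-matrax}) with Proposition \ref{t=0-jost-function} at $t=0$; evaluating at $x>0$ so that $N_r=I$, letting the conjugations by $\ee^{\pm\ii kx\hat{\sigma}_3}$ cancel, and reading off $s_{11}$, $s_{21}$ is exactly the intended computation. The additional cross-check at $x<0$ via the nilpotency $Q^2=O$ is a nice (and correct) bonus but not needed.
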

	\begin{proof} This proposition can be proved by using the definition of scattering matrix $S(k)$ in (\ref{scattering-matrax}) and the Proposition $\ref{t=0-jost-function}$ for $t=0$.
	\end{proof}
	\par
	For simplicity, denote $S(k)=S(0,k)$ and $r(k)=r(0,k).$
	
	\begin{Proposition}[Time evolution of the scattering data]
		For the initial condition $u(x,0)=-U_0\delta(x)$, the time evolutions of scattering data are given by
		\begin{equation}
			S(t,k)={\ee}^{-4\ii k^3t\hat{\sigma}_3}S(k),
		\end{equation}
		\begin{equation}
			r(t,k)={\ee}^{8\ii k^3t}r(k).
		\end{equation}
	\end{Proposition}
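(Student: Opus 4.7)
The plan is to derive an ODE in $t$ for $S(t,k)$ from the $t$-part of the Lax pair and then integrate it. Since $S(t,k)=\Phi_r^{-1}(x,t,k)\Phi_l(x,t,k)$ is $x$-independent, I can evaluate the $t$-derivative at any convenient $x$; the right choice is the asymptotic regime $x\to+\infty$, where $V\to 0$ (because $u,u_x,u_{xx}$ all decay) and the Lax operator $A$ reduces to $-4\ii k^3\sigma_3$. In that regime the scattering relation together with $N_r|_{+\infty}=I$ gives the explicit asymptotic form $N_l(x,t,k)\to e^{-(\ii kx+4\ii k^3 t)\hat{\sigma}_3}(S(t,k))$.

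Substituting this asymptotic form into the $t$-equation $N_t+4\ii k^3[\sigma_3,N]=VN$ at $x\to+\infty$ and carrying out the standard AKNS-type differentiation of the conjugated object $e^{-(\ii kx+4\ii k^3 t)\hat{\sigma}_3}(S(t,k))$ yields, after the commutator from the explicit $t$-dependence of the phase factor is tracked carefully, the autonomous matrix ODE
\[\partial_t S(t,k)=-4\ii k^3[\sigma_3,S(t,k)].\]
This ODE integrates explicitly: conjugation by $e^{-4\ii k^3 t\sigma_3}$ is precisely the one-parameter group generated by $-4\ii k^3[\sigma_3,\cdot]$ on $2\times 2$ matrices, so
\[S(t,k)=e^{-4\ii k^3 t\sigma_3}S(0,k)e^{4\ii k^3 t\sigma_3}=e^{-4\ii k^3 t\hat{\sigma}_3}(S(k)),\]
with $S(k)=S(0,k)$ from Proposition 3.4. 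Componentwise the diagonal entries are preserved, while the $(2,1)$ entry acquires the phase $e^{8\ii k^3 t}$, so $s_{11}(t,k)=s_{11}(k)$ and $s_{21}(t,k)=e^{8\ii k^3 t}s_{21}(k)$; consequently $r(t,k)=s_{21}(t,k)/s_{11}(t,k)=e^{8\ii k^3 t}r(k)$, which is the second claim.

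The main obstacle in this plan is the rigorous derivation of the ODE for $S$ in light of the distributional initial profile $u(x,0)=-U_0\delta(x)$. Standard derivations of the time evolution of scattering data assume a smooth rapidly-decaying potential so that the Lax pair compatibility and the asymptotic analysis are classical; here, by contrast, $N_l(x,0,k)$ is explicitly discontinuous at $x=0$ (see Proposition 3.3), and the equation $N_t+4\ii k^3[\sigma_3,N]=VN$ must be interpreted in a suitable distributional or limiting sense near that point. One can either verify the evolution directly from the explicit piecewise form of $N_l(x,0,k)$ combined with the known free propagation for $x\neq 0$, or approximate $\delta(x)$ by a sequence of smooth bumps, apply the classical derivation on each approximation, and pass to the limit; both routes are expected to yield the ODE above and hence the stated time evolution.
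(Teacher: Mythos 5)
The paper states this proposition without proof, so there is no argument of the authors to compare yours against; your overall strategy --- extract an ODE for the scattering matrix from the $t$-part of the Lax pair at spatial infinity and integrate it --- is the standard one, and your final integration and component-reading are fine. The genuine problem is the step that produces the ODE. With the asymptotic form $N_l(x,t,k)\to \ee^{-(\ii kx+4\ii k^3t)\hat{\sigma}_3}(S(t,k))$, which is what the paper's definition $S(t,k)=\ee^{\ii(kx+4k^3t)\hat{\sigma}_3}(N_r^{-1}N_l)$ together with $N_r\to I$ forces on you, one has, writing $w=\ii kx+4\ii k^3t$,
\[
\partial_t\left[\ee^{-w\hat{\sigma}_3}(S)\right]=-4\ii k^3\left[\sigma_3,\ee^{-w\hat{\sigma}_3}(S)\right]+\ee^{-w\hat{\sigma}_3}(\partial_tS),
\]
so the commutator coming from the $t$-dependence of the phase exactly cancels the term $4\ii k^3[\sigma_3,N]$ on the left of $N_t+4\ii k^3[\sigma_3,N]=VN$; sending $x\to+\infty$ (where $VN\to0$) gives $\partial_tS=0$, not $\partial_tS=-4\ii k^3[\sigma_3,S]$. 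The ODE you assert, and hence the formula $S(t,k)=\ee^{-4\ii k^3t\hat{\sigma}_3}S(k)$, is what comes out for the scattering matrix in the \emph{time-independent} normalization $\ee^{\ii kx\hat{\sigma}_3}(N_r^{-1}N_l)$, whose phase factor contributes no cancelling commutator. As written, your two displayed steps are mutually inconsistent: you must either redo the computation in that normalization, or accept $\partial_tS=0$ for the paper's $S$ and note that the factor $\ee^{8\ii k^3t}$ is then already carried by $\ee^{2\ii t\theta}$ in the jump matrix (which is exactly how RH problem 3.1 is written, and is the only version consistent with the rest of the paper). This normalization tension originates in the paper itself, but a proof has to commit to one convention and cannot reach the stated formula from the stated substitution.

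Two smaller remarks. First, the distributional initial profile is not the real obstacle here: the time evolution concerns $t>0$, where the KdV flow has already smoothed the solution, and the delta function enters only through the explicit $t=0$ scattering data computed in the preceding propositions. Second, the step you do need to justify --- and currently only gesture at as ``standard AKNS-type'' --- is that the Jost functions defined by the $x$-Volterra equations at each fixed $t$ are compatible with the $t$-equation at all, i.e.\ that the normalization at $x\to\mp\infty$ is preserved by the flow; that is where the decay of $u(\cdot,t)$ for $t>0$ is actually used.
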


	\subsection{Construct the Riemann-Hilbert problem}
	\par
	In this subsection, the Jost functions are used to construct Riemann-Hilbert problem.
	First of all, the asymptotic behaviors of the Jost function $N(x,t,k)$ for $k\to\infty$ and $k\to0$ can be given in the proposition below.
	
	\begin{Proposition} The Jost function $N(x,t,k)$ admits the following asymptotic expansions for $k\to\infty$ and $k\to0$, respectively, i.e.,
		\par
		(1) (Asymptotic behavior as $k\to\infty$) $N(x,t,k)\to I+O(\frac{1}{k})$ as $\ k\to\infty$.
		\par
		(2) (Asymptotic behavior as $k\to0$) $N(x,t,k)\to\alpha(x)Qk^{-1}+O(1)$ as $\ k\to0$, where $\alpha(x)$ is a real valued function.
	\end{Proposition}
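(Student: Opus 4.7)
The plan is to start from the Volterra integral equations (\ref{volterra-integral-equations}) and exploit the factorization $U(x,t,k)=\frac{\ii u(x,t)}{2k}Q$: a scalar factor of order $k^{-1}$ multiplied by the matrix $Q$, which is nilpotent in the sense that $Q^{2}=0$. This single algebraic identity, together with the entrywise boundedness of $\ee^{-\ii k(x-s)\hat{\sigma}_3}Q$ for $k$ in the appropriate half-plane, controls both asymptotic regimes.

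For part (1), I would iterate the Volterra equation for $N_l$ (the case of $N_r$ is parallel). The $n$-fold iterate carries a prefactor $k^{-n}$ coming from $n$ copies of $U$, multiplied by an iterated spatial integral whose convergence follows from the integrability of $u(\cdot,t)$. Isolating the $n=0$ term $I$ and bounding the remainder in the standard way yields $N=I+O(k^{-1})$ as $k\to\infty$.

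For part (2), a direct Neumann iteration is unavailable because the kernel is singular at $k=0$. The plan is to peel off the leading singularity by setting $N=\bigl(I+k^{-1}\alpha(x,t)Q\bigr)\widetilde{N}$ and substituting into the spatial equation $N_x+\ii k[\sigma_3,N]=\frac{\ii u}{2k}QN$. The $k^{-2}$ relation is satisfied automatically because $Q^{2}=0$, while the $k^{-1}$ relation combined with the boundary condition $\alpha(-\infty,t)=0$ determines $\alpha$ by the ODE $2\alpha_x=\ii u$, giving
\begin{equation*}
\alpha(x,t)=\frac{\ii}{2}\int_{-\infty}^{x}u(s,t)\,\dd s.
\end{equation*}
That the residue at $k=0$ is proportional to $Q$ rather than a more general matrix annihilated by $Q$ from the left is forced by $\det N\equiv 1$, which makes the trace of the residue vanish. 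With $\alpha$ so chosen, the residual $\widetilde{N}$ satisfies a Volterra equation whose kernel is regular at $k=0$, and a second Neumann argument gives $\widetilde{N}=I+O(k)$, hence $N=\alpha(x,t)Qk^{-1}+O(1)$. The stated reality of $\alpha$ (up to the overall $\ii$ implicit in the normalization) then follows from the symmetry $\overline{N(x,t,-\bar k)}=N(x,t,k)$, itself a consequence of the reality of $u$ and the structure of the Lax matrices.

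The principal obstacle is the rigorous justification of the gauge-and-match step at $k=0$: one must verify that after subtracting $\alpha Q/k$ the new Volterra equation has a kernel that is genuinely bounded on a full neighborhood of $k=0$ uniformly in $x$. Because the initial profile is a distribution rather than a classical function, I would perform this verification directly on the explicit Jost functions of Proposition \ref{t=0-jost-function}, where the small-$k$ expansion of $I-\frac{\ii U_0}{2k}\ee^{-\ii kx\hat{\sigma}_3}Q$ can be read off in closed form, and then propagate to $t>0$ using the known time evolution of the scattering data established earlier in this section.
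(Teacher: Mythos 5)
The paper states this proposition without any proof, so there is nothing of the authors' to compare your argument against; I can only assess the proposal on its own terms. Part (1) is the standard Neumann-series estimate (the same machinery the paper uses in its analyticity proposition) and is fine. For part (2), your gauge-and-match computation produces the right answer: checking against the explicit $t=0$ Jost functions of Proposition \ref{t=0-jost-function}, the coefficient of $Qk^{-1}$ in $N_l(x,0,k)=I-\frac{\ii U_0}{2k}\ee^{-\ii kx\hat{\sigma}_3}Q$ is $-\frac{\ii U_0}{2}H(x)=\frac{\ii}{2}\int_{-\infty}^{x}u(s,0)\,\dd s$, exactly your $\alpha$. Two soft spots deserve attention. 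First, your claim that $\det N\equiv 1$ forces the residue $R$ at $k=0$ to be traceless is not right as stated: writing $N=R/k+N_0+O(k)$, the $k^{-1}$ coefficient of $\det N$ is $R_{11}(N_0)_{22}+R_{22}(N_0)_{11}-R_{12}(N_0)_{21}-R_{21}(N_0)_{12}$, not $\operatorname{tr}R$. The airtight route is the symmetry $N(x,t,k)=\sigma_1 N(x,t,-k)\sigma_1$ (stated in the paper and invoked by you only for the reality question), which gives $R=-\sigma_1 R\sigma_1$, i.e. $R_{11}=-R_{22}$ and $R_{12}=-R_{21}$; combined with the equal-rows structure $R=\alpha Q\widetilde N_0$ coming out of your gauge factorization, this pins $R\propto Q$ with no appeal to the determinant. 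Second, your $\alpha=\frac{\ii}{2}\int_{-\infty}^{x}u$ is purely imaginary for real $u$, so the proposition's assertion that $\alpha(x)$ is real-valued is not literally satisfied by what you (correctly) derive; both your formula and the explicit $t=0$ expressions indicate the statement should be normalized as $N\to\ii\alpha(x)Qk^{-1}+O(1)$ with $\alpha$ real, which is in fact the convention the authors adopt two lines later when they set $\tilde M=(I-\frac{\ii y(x,t)}{k}Q)M$. You are right not to paper over this. Finally, the invertibility and regularity of the factor $\widetilde N$ at $k=0$ — needed so that $\alpha_xQ\widetilde N_0=\frac{\ii u}{2}Q\widetilde N_0$ actually yields $2\alpha_x=\ii u$ — is precisely the step you defer to verification on the explicit Jost functions; that verification must be carried out (and, since $N_l$ and $N_r$ are normalized at opposite ends, note that they acquire different functions $\alpha$, differing by $\frac{\ii}{2}\int_{\R}u$), as it is the only place the argument could genuinely fail.
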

	\par
	The solution of the initial problem (\ref{InitialProblem}) for KdV equation is reconstructed through the formula
	\begin{equation}\label{reconstructed-formula}
		u(x,t)=-2\ii\frac{\partial}{\partial x}\lim_{k\to\infty}k(N(x,t,k)-I)_{11}.
	\end{equation}
	
	\begin{Proposition}[Analyticity] Assume $u(\cdot,t)\in\L^1(\R)$ for all $t>0$ and let $N_j=(N_{j1},N_{j2})$ for $j\in\{l,r\}$, then the Jost function possesses the following properties:
		\par
		(1) $N_{l1}(x,t,k)$ is analytic for $k\in\C_{+}$, $N_{l2}(x,t,k)$ is analytic for $k\in\C_{-}$, and both of them are continuous up to $k\in \R\backslash\{0\}$.
		\par
		(2) $N_{r1}(x,t,k)$ is analytic for $k\in\C_{-}$, $N_{r2}(x,t,k)$ is analytic for $k\in\C_{+}$, and both of them are continuous up to $k\in \R\backslash\{0\}$.
		\par
		(3) $s_{11}(k)$ is analytic for $k\in\C_{+}$, $s_{22}(k)$ is analytic for $k\in\C_{-}$, while $s_{12}(k)$ and $s_{21}(k)$ are continuous up to $k\in \R$.
		\label{analyticity}
	\end{Proposition}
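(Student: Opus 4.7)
The plan is to deduce all three parts from the Volterra integral equations (\ref{volterra-integral-equations}) by standard Picard iteration, with the half-plane selection governed by a direct computation of $e^{-\ii k(x-s)\hat{\sigma}_3}$ acting column-by-column. First I would record the elementary identity
\[
e^{-\ii k(x-s)\hat{\sigma}_3}A=\begin{pmatrix} a_{11} & e^{-2\ii k(x-s)}a_{12}\\ e^{2\ii k(x-s)}a_{21} & a_{22}\end{pmatrix}
\]
for any $A=(a_{ij})$. Consequently, the first column of each integrand only carries the factor $e^{2\ii k(x-s)}$ in its off-diagonal slot, while the second column only carries $e^{-2\ii k(x-s)}$. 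This is what isolates the correct half-plane for each column of $N_l$ and $N_r$.

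For the first column of $N_l$, the integration ranges over $s\in(-\infty,x]$, so $x-s\geq 0$ and $|e^{2\ii k(x-s)}|\leq 1$ whenever $\operatorname{Im} k\geq 0$. Combined with $u(\cdot,t)\in L^1(\R)$ and the boundedness of the matrix $Q$ inside $U=\tfrac{\ii u}{2k}Q$, this yields a uniform bound on the iterated kernels on any compact subset of $\overline{\C}_+\setminus\{0\}$. The resulting Neumann series $N_{l,1}=\sum_{n\geq 0}N_{l,1}^{(n)}$ therefore converges uniformly on such compacta. Each iterate's $k$-dependence lives in the exponentials and in a factor $k^{-n}$ from the $n$ copies of $U$, so every $N_{l,1}^{(n)}$ is meromorphic in $\C\setminus\{0\}$ and analytic throughout $\C_+$; Weierstrass's theorem then propagates analyticity in $\C_+$ and continuity up to $\R\setminus\{0\}$ to the sum. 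The remaining three columns $N_{l,2}$, $N_{r,1}$, $N_{r,2}$ are handled by exactly the same argument, with the half-plane flipped according to the sign in the exponential and the direction of the integration interval (for $N_r$ the integration is over $s\geq x$, which flips the sign of $x-s$).

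For part (3) I would invoke $\det N_l\equiv \det N_r\equiv 1$, which follows from Abel's identity because the coefficient matrix in (\ref{lax-Phi}) is traceless. Then $N_r^{-1}=\operatorname{adj}(N_r)$, and identity (\ref{scattering-matrax}) expresses each entry $s_{ij}(k)$ as an explicit bilinear combination of entries of $N_l$ and $N_r$. A direct reading shows that $s_{11}$ involves only the column $N_{l,1}$ (analytic in $\C_+$) and the column $N_{r,2}$ (analytic in $\C_+$), so $s_{11}$ is analytic in $\C_+$; symmetrically $s_{22}$ is analytic in $\C_-$, while $s_{12}$ and $s_{21}$ pair opposite columns and therefore only inherit continuity on $\R\setminus\{0\}$. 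Since $S(t,k)$ is $x$-independent, this identification may be read off at any convenient $x$.

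The principal technical obstacle is the $k^{-1}$ factor in $U$, which forbids extending continuity to $k=0$ and in fact forces a pole of $N$ at the origin, compatible with the $k\to 0$ asymptotic recorded in the preceding Proposition. A secondary subtlety, specific to the present paper, is that the $\delta$-function initial datum in (\ref{InitialProblem}) does not literally lie in $L^1(\R)$; for the $t=0$ slice this is bypassed by reading the required analyticity directly off the closed-form Jost functions of Proposition \ref{t=0-jost-function}, which are manifestly meromorphic in $k\in\C\setminus\{0\}$, and then propagating it to $t>0$ via the explicit exponential time evolution of the scattering data.
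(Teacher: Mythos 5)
Your proposal is correct and its engine is the same as the paper's: a Picard/Neumann iteration for each column of the Volterra equations (\ref{volterra-integral-equations}), with the half-plane for each column dictated by which of $\ee^{\pm2\ii k(x-s)}$ appears and by the sign of $x-s$ on the integration range. The differences are worth noting. For continuity up to $\R\backslash\{0\}$ you argue by uniform convergence of the series on compacta of $\overline{\C_+}\backslash\{0\}$ together with continuity of each iterate, whereas the paper runs a separate Gronwall plus dominated-convergence estimate on $\|N_{l1}(x,t,k)-N_{l1}(x,t,k_0)\|$; both work, and yours is the more economical route provided you actually verify continuity of each iterate at boundary points (dominated convergence in the $s$-integral does this). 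For part (3) you supply a genuine proof --- $\det N_l=\det N_r=1$ from tracelessness of $L$, then $N_r^{-1}=\operatorname{adj}(N_r)$ so that $s_{11}$ is a bilinear combination of the two $\C_+$-analytic columns --- which the paper omits entirely, so this is added value. Your observation that the $\delta$-datum is not in $\L^1(\R)$ and that the $t=0$ analyticity must instead be read off the explicit formulas of Proposition \ref{t=0-jost-function} is also a legitimate point the paper glosses over. One small inaccuracy: your claim that each iterate $N_{l1}^{(n)}$ is \emph{meromorphic on} $\C\backslash\{0\}$ is unjustified for general $u(\cdot,t)\in\L^1(\R)$, since the integrals defining the iterates diverge when $\operatorname{Im}k<0$ (the factor $\ee^{2\ii k(x-s)}$ grows there); only analyticity on $\C_+$ and continuity up to $\R\backslash\{0\}$ survive, which is all the statement needs, so this is a harmless overstatement rather than a gap.
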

	\begin{proof}
		1. In what follows, we only prove the analyticity of $N_{l1}(x,t,k)$, and the proofs of the analyticity for other Jost functions are similar. Equation $(\ref{volterra-integral-equations})$ indicates that
		\begin{equation}
			N_{l1}(x,t,k)=I+\int_{-\infty}^{x}\operatorname{diag}(1,\ee^{2\ii k(x-s)})U(s,t,k)N_{l1}(s,t,k)\dd s.
			\label{N_l1-intergral-equation}
		\end{equation}
		\par
		Construct a sequence of functions in the forms
		\begin{equation}
			\begin{aligned}
				&N^{(0)}_{l1}=(1,0)^T,\\
				&N^{(j+1)}_{l1}(x,t,k)=\int_{-\infty}^{x}\operatorname{diag}(1,\ee^{2\ii k(x-s)})U(s,t,k)N^{(j)}_{l1}(s,t,k)\dd s,\quad j\geq0.
			\end{aligned}
			\label{N_l1-recursion-formula}
		\end{equation}
		For any $\epsilon>0$, letting $\C_{+}^{\epsilon}=\C\backslash D(0,\epsilon)=\C\backslash\{k\in\C: |k|<\epsilon\}$ and denoting
		\[\rho(x,t):=\frac{4}{\epsilon}\int_{-\infty}^{x}|u(s,t)|\dd s,\]
		by induction on $\C_{+}^{\epsilon}$, it can be obtained that
		\begin{equation}
			||N^{(j)}_{l1}(x,t,k)||\leq \frac{(\rho(x,t))^j}{j!},\quad j=0,1,2,\cdots,
		\end{equation}
		where $||M||$ represents the sum of the absolute values of each element of the matrix $M$. Consequently, the integrals defined by $(\ref{N_l1-recursion-formula})$ converge and are bounded. From the analyticity of $N^{(0)}_{l1}$, it is known that $N^{(j)}_{l1}(x,t,k)$ is analytic for $k\in\C_{+}$.
		\par
		Fix $a\in\R$, then $\rho(x,t)\leq\rho(a,t)$ for $x\leq a$. Thus, for $x\leq a$ and $k\in\C^{\epsilon}_{+}$, one has
		\[||N^{(j)}_{l1}(x,t,k)||\leq \frac{(\rho(a,t))^j}{j!},\quad j=0,1,2,\cdots.\]
		\par
		Therefore, the series $\sum_{j=0}^{\infty}N_{l1}(x,t,k)$ converges absolutely and uniformly for $x\leq a$ and $k\in\C^{\epsilon}_{+}$, and it is analytic on $\C^{\epsilon}_{+}$.
		\par
		From the recursion formula and the uniform convergence above, it is obvious that $\sum_{j=0}^{\infty}N_{l1}(x,t,k)$ satisfies the integral equation in $(\ref{N_l1-intergral-equation})$. Finally, due to the arbitrariness of $\epsilon$, it follows that the Jost function $N_{l1}(x,t,k)$ is analytic for $\operatorname{Im}k>0$.
		\par
		2. Now we prove that $N_{l1}(x,t,k)$ is continuous up to $\R\backslash\{0\}$ for $k\in\overline{{\C}_{+}}$. Assuming $\epsilon\in(0,k_0/2)$ and denoting $K(s,t,k):=\operatorname{diag}(1,\ee^{2\ii k(x-s)})U(s,t,k)$, then for $k_0\in\R\backslash\{0\}, \operatorname{Im}k>0$, we have
		\begin{equation}
			\begin{aligned}
				||N_{l1}(x,t,k)-N_{l1}(x,t,k_0)||\leq& \int_{-\infty}^{x}||K(s,t,k)||||N_{l1}(s,t,k)-N_{l1}(s,t,k_0)||\dd s\\
				&+\int_{-\infty}^{x}||N_{l1}(s,t,k_0)||||K(s,t,k)-K(s,t,k_0)||\dd s.
			\end{aligned}
			\label{continuity-estimate1}
		\end{equation}

		Since $\operatorname{Im}k>0$, it can be seen that
		\begin{equation}
			||K(s,t,k)||\leq2||U(s,t,k)||\leq2\left\| \frac{\ii u(s,t)}{2k}Q\right\| \leq \frac{4}{|k|}|u(s,t)|,
			\label{continuity-estimate2}
		\end{equation}
		so one has
		\begin{equation}
			\begin{aligned}
				||N_{l1}(x,t,k)||&\leq1+\int_{-\infty}^{x}||K(s,t,k)||||N_{l1}(s,t,k)||\dd s\\
				&\leq1+\frac{4}{|k|}\int_{-\infty}^{x}|u(s,t)|||N_{l1}(s,t,k)||\dd s.
			\end{aligned}
		\end{equation}
		\par
		The Grownwall's inequality shows that
		\begin{equation}
			\begin{aligned}
				||N_{l1}(x,t,k)||&\leq1+\int_{-\infty}^{x}\frac{4}{|k|}|u(s,t)|\exp\left(\int_{s}^{x}\frac{4}{|k|}|u(r,t)|\dd r \right)\dd s \\
				&\leq1+\frac{4}{|k|}\int_{\R}|u(s,t)|\dd s\exp\left(\frac{4}{|k|}\int_{\R}|u(s,t)|\dd s\right) =:1+c\ee^c<\infty,
			\end{aligned}
			\label{continuity-estimate3}
		\end{equation}
		where $c=\frac{4}{|k|}\int_{\R}|u(s,t)|\dd s$.
		
		From Eqs. $(\ref{continuity-estimate2})$ and $(\ref{continuity-estimate3})$, it is known that
		\begin{equation}
			||N_{l1}(\cdot,t,k_0)||||K(\cdot,t,k)-K(\cdot,t,k_0)||\in\L^1(\R).
		\end{equation}
		By the Lebesgue dominated convergence theorem, there exists a constant $\eta_1=\eta_1(\epsilon)>0$ such that
		\begin{equation}
			\int_{-\infty}^{x}||N_{l1}(s,t,k_0)||||K(s,t,k)-K(s,t,k_0)||\dd s<\epsilon,
			\label{continuity-estimate4}
		\end{equation}
		for $|k-k_0|<\eta_1$.
		\par
		Substituting the equations (\ref{continuity-estimate2}), (\ref{continuity-estimate3}) and (\ref{continuity-estimate4}) into (\ref{continuity-estimate1}) yields
		\begin{equation}
			||N_{l1}(x,t,k)-N_{l1}(x,t,k_0)||\leq \frac{4}{|k|}\int_{-\infty}^{x}|u(s,t)|||N_{l1}(s,t,k)-N_{l1}(s,t,k_0)||\dd s+\epsilon.
		\end{equation}
		\par
		Thus the Grownwall's inequality again indicates that
		\begin{equation}
			||N_{l1}(x,t,k)-N_{l1}(x,t,k_0)||\leq\epsilon c,
		\end{equation}
		further	taking $\eta=\min\{\epsilon,\eta_1\}$, then for $|k-k_0|<\eta$, one obtains
		\begin{equation}
			||N_{l1}(x,t,k)-N_{l1}(x,t,k_0)||\leq\epsilon\frac{2}{|k_0|}||u(\cdot,t)||_{\L^1(\R)}.
		\end{equation}
		Finally, due to the arbitrariness of $\epsilon$, it follows that $N_{l1}(x,t,k)$ is continuous with respect to $\R\backslash\{0\}$ in $k\in\overline{{\C}_{+}}$.
	\end{proof}
	\begin{Proposition}[Symmetry] The Jost function and scattering matrix satisfy the following symmetries:
		$$
		N(x,t,k)=\sigma_1\overline{N(x,t,\bar{k})}\sigma_1=\sigma_1N(x,t,-k)\sigma_1,
		~S(t,k)=\sigma_1\overline{S(t,\bar{k})}\sigma_1=\sigma_1S(t,-k)\sigma_1.
		$$
	\end{Proposition}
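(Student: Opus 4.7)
The plan is to prove both symmetries for $N$ directly from the Volterra integral equations (\ref{volterra-integral-equations}), and then deduce the symmetries for $S$ from its defining formula (\ref{scattering-matrax}). The key idea is that each symmetry transformation sends the integral equation for $N_l$ (resp.\ $N_r$) to the same integral equation, so by uniqueness of the Volterra solution the transformed Jost function must coincide with the original.

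First I would record two algebraic identities that control the conjugation by $\sigma_1$. A direct computation shows $\sigma_1 Q \sigma_1 = -Q$, and since $\sigma_1 \sigma_3 \sigma_1 = -\sigma_3$ one gets the operator identity $\sigma_1\, e^{i\alpha\hat{\sigma}_3}(X)\, \sigma_1 = e^{-i\alpha\hat{\sigma}_3}(\sigma_1 X \sigma_1)$ for every matrix $X$ and scalar $\alpha$. With these in hand, I would treat the two symmetries in parallel. Starting from the equation
\begin{equation*}
N_l(x,t,k) = I + \int_{-\infty}^{x} e^{-ik(x-s)\hat{\sigma}_3}\bigl(U(s,t,k)N_l(s,t,k)\bigr)\,\mathrm{d}s,\qquad U=\tfrac{iu}{2k}Q,
\end{equation*}
and using that $u$ is real together with $U(s,t,-k)=-U(s,t,k)$ and $\overline{U(s,t,\bar k)}=-U(s,t,k)$, I would replace $k$ by $-k$ (respectively substitute $\bar k$ for $k$ and conjugate), then conjugate the whole equation by $\sigma_1$. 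The sign flips coming from $\sigma_1 Q\sigma_1=-Q$ cancel the sign flips from $U(s,t,-k)=-U(s,t,k)$, and the conjugation by $\sigma_1$ converts $e^{+ik(x-s)\hat{\sigma}_3}$ back into $e^{-ik(x-s)\hat{\sigma}_3}$. The outcome is that both $\sigma_1 N_l(x,t,-k)\sigma_1$ and $\sigma_1\overline{N_l(x,t,\bar k)}\sigma_1$ satisfy exactly the Volterra equation for $N_l(x,t,k)$; uniqueness then gives the desired identities, and the case of $N_r$ is treated identically.

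For the scattering matrix I would substitute the two symmetries for $N_l$ and $N_r$ into
$S(t,k)=e^{i(kx+4k^3t)\hat{\sigma}_3}\bigl(N_r^{-1}(x,t,k)N_l(x,t,k)\bigr).$
Because the phase $kx+4k^3t$ is odd in $k$ and real for $k\in\mathbb{R}$, the factor $e^{i(kx+4k^3t)\hat{\sigma}_3}$ also transforms correctly under $k\to-k$ and under conjugation with $k\to\bar k$ (using the same $\sigma_1\sigma_3\sigma_1=-\sigma_3$ identity). Since $\sigma_1$-conjugation and complex conjugation both commute with matrix inversion, the symmetries for $N_l$ and $N_r$ propagate to the product $N_r^{-1}N_l$, yielding the stated symmetries of $S(t,k)$.

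The only nontrivial bookkeeping, and the main obstacle, is the careful handling of the adjoint action notation $\hat{\sigma}_3$ and the interplay between complex conjugation, the sign of $k$, and conjugation by $\sigma_1$; once the two algebraic identities above are in place the rest is routine verification. A minor technical point is that everything is stated on $\mathbb{C}\setminus\{0\}$ because $U(s,t,k)$ has a pole at $k=0$, but this causes no trouble since both sides of each identity are analytic on the same domain (by Proposition \ref{analyticity}), so equality on the real axis extends by analytic continuation wherever both sides are defined.
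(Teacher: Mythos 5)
The paper states this Symmetry proposition without any proof, so there is no in-paper argument to compare against; your proposal is correct and supplies the standard argument one would expect here. The two algebraic identities you isolate do hold ($\sigma_1 Q\sigma_1=-Q$ since both rows of $Q$ are equal, and $\sigma_1\ee^{\ii\alpha\hat{\sigma}_3}(\cdot)\sigma_1=\ee^{-\ii\alpha\hat{\sigma}_3}(\sigma_1\cdot\sigma_1)$ from $\sigma_1\sigma_3\sigma_1=-\sigma_3$), the sign cancellation with $U(s,t,-k)=-U(s,t,k)=\overline{U(s,t,\bar k)}$ works as you describe, and uniqueness of the Volterra solution (which the paper's Neumann-series construction in Proposition \ref{analyticity} already provides) closes the argument for $N$, after which the claim for $S$ follows from (\ref{scattering-matrax}) exactly as you say.
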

	\par
	\par
	Now it is ready to construct the Riemann-Hilbert problem associated with the initial problem (\ref{InitialProblem}) of KdV equation.
	\par
	According to Proposition $\ref{analyticity}$, define a sectionally analytic matrix-valued function
	\begin{equation}
		M(x,t,k)=\begin{cases}
			\begin{pmatrix}
				\frac{N_{l1}(x,t,k)}{s_{11}(k)}&N_{r2}(x,t,k)
			\end{pmatrix},\ \operatorname{Im}k>0,\\
			\begin{pmatrix}
				N_{r1}(x,t,k)&\frac{N_{l2}(x,t,k)}{s_{22}(k)}
			\end{pmatrix},\ \operatorname{Im}k<0.
		\end{cases}
	\end{equation}
	It should be noted that for $U_0>0$, the function $M(x,t,k)$ has two poles $k_p,-k_p$ on $\C\backslash\R$, while for $U_0<0$, $M(x,t,k)$ does not have any pole on $\C\backslash\R$. Without loss of generality, we only give the RH problem associated with the initial problem (\ref{InitialProblem}) for $U_0>0$.
	
	Denote the phase function
	\begin{equation}
		\label{theta}
		\theta(x,t,k):=k\frac{x}{t}+4k^3.
	\end{equation}
	\begin{RHP}\label{RHP-3.1}
		The function $M(x,t,k)$ satisfies the following properties:
		\par
		(1) $M(x,t,\cdot):\C\backslash\R\to \C^{2\times2}$ is analytic;
		\par
		(2) $M(x,t,k)=\sigma_1\overline{M(x,t,\bar{k})}\sigma_1=\sigma_1M(x,t,-k)\sigma_1$;
		\par
		(3) $M(x,t,k)\to I,\  k\to\infty,k\notin\R$;
		\par
		(4) $M(x,t,k)\to\alpha(x)Qk^{-1},\ k\to0,\ k\notin\R$, where $\alpha(x)$ is a real-valued function;
		\par
		(5) $M_{+}(x,t,k)=M_{-}(x,t,k)V(x,t,k)$ for $k\in\R\backslash\{0\}$, where the jump matrix is
		\begin{equation}
			V(x,t,k)=\begin{pmatrix}
				1-|r(k)|^2&-\overline{r(k)}{\ee}^{-2\ii t\theta(x,t)}\\
				r(k){\ee}^{2\ii t\theta(x,t)}&1
			\end{pmatrix};
		\end{equation}
		\par
		(6) For $U_0>0$, $M(x,t,k)$ satisfies the residue conditions at $k_p,-k_p$
		\begin{equation}
			\Res_{k=k_p}M(x,t,k)=\lim_{k\to k_p}M(x,t,k)
			\begin{pmatrix}
				0&0\\
				\gamma_{p}\ee^{2\ii t\theta(k_p)}&0
			\end{pmatrix},
			\label{res_M_at_kp}
		\end{equation}
		\begin{equation}
			\Res_{k=-k_p}M(x,t,k)=\lim_{k\to -k_p}M(x,t,k)
			\begin{pmatrix}
				0&-\gamma_{p}\ee^{2\ii t\theta(k_p)}\\
				0&0
			\end{pmatrix},
			\label{res_M_at_-kp}
		\end{equation}
		where $\gamma_{p}:=\frac{s_{21}(k_p)}{s^{\prime}_{11}(k_p)}$. In fact, it can be known that $\gamma_{p}=-k_{p}=-\frac{\ii U_0}{2}$.
	\end{RHP}
	To remove the first-order pole of $M(x,t,k)$ at $k=0$ (see item (4) in the RH problem \ref{RHP-3.1}), define $\tilde{M}(x,t,k)$ as
	\begin{equation}
		\tilde{M}(x,t,k)=\left(I-\frac{\ii y(x,t)}{k}Q \right)M(x,t,k) ,
	\end{equation}
	where
	\begin{equation}
		y(x,t)=-\ii \lim_{k\to\infty}k\tilde{M}_{21}(x,t,k).
	\end{equation}
	
	Then the matrix-valued function $\tilde{M}(x,t,k)$ satisfies the following Riemann-Hilbert problem:
	
	\begin{RHP} The function
		$\tilde{M}(x,t,k)$ satisfies the following properties:
		\par
		(1) $\tilde{M}(x,t,\cdot): ~\C\backslash\R\to \C^{2\times2}$ is analytic;
		\par
		(2) $\tilde{M}(x,t,k)=\sigma_1\overline{\tilde{M}(x,t,\bar{k})}\sigma_1=\sigma_1\tilde{M}(x,t,-k)\sigma_1$;
		\par
		(3) $\tilde{M}(x,t,k)\to I,\quad k\to\infty,k\notin\R$;
		\par
		(4) $\tilde{M}_{+}(x,t,k)=\tilde{M}_{-}(x,t,k)\tilde{V}(x,t,k)$ for $k\in\R\backslash\{0\}$, where the jump matrix is
		\begin{equation}
			\tilde{V}(x,t,k)=\begin{pmatrix}
				1-|r(k)|^2&-\overline{r(k)}{\ee}^{-2\ii t\theta(x,t)}\\
				r(k){\ee}^{2\ii t\theta(x,t)}&1
			\end{pmatrix};
		\end{equation}
		\par
		(5) For $U_0>0$, $\tilde{M}(x,t,k)$ satisfies the residue conditions ($\ref{res_M_at_kp}$) and ($\ref{res_M_at_-kp}$) at $k_p,-k_p.$
		\label{RHP:tildeM}
	\end{RHP}
	
	\begin{theorem}
		The solution of the KdV equation is given by
		\begin{equation}\label{u=F(tildeM)}
			u(x,t)=-2\ii\frac{\partial}{\partial x}\lim_{k\to\infty}(k(\tilde{M}_{11}(x,t,k)-1))-2\ii\frac{\partial}{\partial x}\lim_{k\to\infty}k\tilde{M}_{21}(x,t,k).
		\end{equation}
	\end{theorem}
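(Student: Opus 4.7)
The plan is to reduce the claim to the already-stated reconstruction formula \eqref{reconstructed-formula} together with the explicit relationship between $M$ and $\tilde M$. The key algebraic input is the nilpotency of the matrix $Q$: a direct check gives $Q^{2}=0$, so the gauge factor
\[
G(x,t,k):=I-\frac{\ii y(x,t)}{k}Q
\]
is invertible with the clean inverse $G^{-1}=I+\frac{\ii y(x,t)}{k}Q$. Hence from the definition $\tilde M=GM$ we immediately obtain $M=\bigl(I+\frac{\ii y}{k}Q\bigr)\tilde M$, which is the identity that drives all subsequent computations.

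First I would upgrade the reconstruction formula from the Jost function level to the $M$ level. Because $M_{11}=N_{l1,1}/s_{11}$ in the upper half plane and $s_{11}(k)=1+O(1/k)$ is independent of $x$ and $t$, the difference between $\lim_{k\to\infty}k(N_{l1,1}-1)$ and $\lim_{k\to\infty}k(M_{11}-1)$ is an $x$-independent quantity, which is annihilated by $\partial_x$. Therefore \eqref{reconstructed-formula} can be rewritten as
\[
u(x,t)=-2\ii\,\frac{\partial}{\partial x}\lim_{k\to\infty}k\bigl(M_{11}(x,t,k)-1\bigr).
\]

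Next, using $M=(I+\frac{\ii y}{k}Q)\tilde M$ and the explicit shape of $Q=\bigl(\begin{smallmatrix}1&-1\\1&-1\end{smallmatrix}\bigr)$, the $(1,1)$-entry is
\[
M_{11}=\tilde M_{11}+\frac{\ii y(x,t)}{k}\bigl(\tilde M_{11}-\tilde M_{21}\bigr).
\]
From the normalization $\tilde M(x,t,k)\to I$ as $k\to\infty$, we have $\tilde M_{11}\to 1$ and $\tilde M_{21}\to 0$, so
\[
\lim_{k\to\infty}k\bigl(M_{11}-1\bigr)=\lim_{k\to\infty}k\bigl(\tilde M_{11}-1\bigr)+\ii\,y(x,t)\cdot\lim_{k\to\infty}\bigl(\tilde M_{11}-\tilde M_{21}\bigr)=\lim_{k\to\infty}k\bigl(\tilde M_{11}-1\bigr)+\ii\,y(x,t).
\]

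Finally, the definition $y(x,t)=-\ii\lim_{k\to\infty}k\tilde M_{21}$ gives $\ii y(x,t)=\lim_{k\to\infty}k\tilde M_{21}(x,t,k)$, so substituting and applying $-2\ii\,\partial_x$ to both sides yields the claimed decomposition. The main technical point, and the only step requiring care, is the justification in the first paragraph that one may replace $N_{l1,1}$ by $M_{11}$ in \eqref{reconstructed-formula}; this relies on the fact that $s_{11}(k)$ does not depend on $(x,t)$ and that its $1/k$-expansion coefficient therefore contributes nothing after differentiation in $x$. The remaining manipulations are purely algebraic and use only $Q^{2}=0$ together with the prescribed asymptotic normalizations of $\tilde M$.
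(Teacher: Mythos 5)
Your proposal is correct and follows essentially the same route as the paper: invert the gauge factor using $Q^{2}=0$, express $M_{11}=\bigl(1+\tfrac{\ii y}{k}\bigr)\tilde M_{11}-\tfrac{\ii y}{k}\tilde M_{21}$, pass to the limit using the normalization of $\tilde M$, and identify $\ii y$ with $\lim_{k\to\infty}k\tilde M_{21}$. The only difference is that you explicitly justify replacing $N_{l1,1}$ by $M_{11}$ in the reconstruction formula via the $x$-independence of $s_{11}$, a step the paper leaves implicit.
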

	\begin{proof}
		Denoting $\partial_x=\frac{\partial}{\partial x}$ and reminding the property $\tilde{M}(x,t,k)\to I$ as $k\to\infty$, it follows that
		\[\begin{aligned}
			u(x,t)&=-2\ii\partial_x\lim_{k\to\infty}\left[k\left(\left( 1+\frac{\ii y}{k}\right) \tilde{M}_{11}-\frac{\ii y}{k}\tilde{M}_{21}-1\right) \right] \\
			&=-2\ii\partial_x\lim_{k\to\infty}\left[(k+\ii y)\tilde{M}_{11}-\ii y\tilde{M}_{21}-k\right]\\
			&=-2\ii\partial_x\lim_{k\to\infty}\left[k(\tilde{M}_{11}-1)+\ii y\tilde{M}_{11}-\ii y\tilde{M}_{21}\right]\\
			&=-2\ii\partial_x\left[\lim_{k\to\infty}k(\tilde{M}_{11}-1)+\lim_{k\to\infty}(k\tilde{M}_{21})\right].
		\end{aligned}\]
	\end{proof}

	\section{Proofs of the Main Results}
	\label{section-proofs}
	In this section, utilizing the reconstruction formula (\ref{u=F(tildeM)}), the long-time asymptotic behaviors of the solution to the initial problem (\ref{InitialProblem}) of KdV equation in Theorem \ref{theorem-1} and Theorem \ref{theorem-2} are analyzed by deforming the RH problem \ref{RHP:tildeM} through Deift-Zhou nonlinear steepest descent method \cite{Deift-Zhou-1993}.

	\subsection{\bf Case I. The parameter $U_0>0$}
	
	It will be seen that for parameter $U_0>0$ there are five regions in the initial problem (\ref{InitialProblem}) of KdV equation, which are soliton region, decay region, self-similar region, collisionless shock region and dispersive wave region. In what follow, we analyze these regions in detail except for the collisionless shock region.

	\subsubsection{The soliton region and decay region}
	
	When $x/t\geq C$ for $C>0$, the pole $k_p$ lies on the imaginary axis. Due to the relative positions of the pole $k_p$ in the signature of the ${\rm Re}(\ii\theta)$, different scenarios need to be discussed.
	\par
	In order to convert the residue condition into the jump condition, take the transformation for small $\epsilon>0$ of the form
	\begin{equation}
		M^{(0)}(x,t,k)=\begin{cases}
			\tilde{M}(x,t,k)\begin{pmatrix}
				1&0\\
				-\dfrac{\gamma_{p}\ee^{2\ii t\theta(k_p)}}{k-k_p}&1
			\end{pmatrix},&\quad k\in U_+,\\\\
			\tilde{M}(x,t,k)\begin{pmatrix}
				1&\dfrac{\gamma_{p}\ee^{2\ii t\theta(k_p)}}{k+k_p}\\
				0&1
			\end{pmatrix},&\quad k\in U_-,\\
			\tilde{M}(x,t,k),\quad \text{else},
		\end{cases}
	\end{equation}
	where $U_+=D(k_p,\epsilon)$ and $U_-=D(-k_p, \epsilon)$ are two disks with radius $\epsilon$. Further, define $\Gamma_{\pm}=\partial U_{\pm}$.
	
	Then the function $M^{(0)}(x,t,k)$ satisfies the following RH problem:
	\begin{RHP}
		Find a $2\times2$ matrix-valued function $M^{(0)}(x,t,k)$ with the following properties:\\
		(1) $M^{(0)}(x,t,\cdot)$ is sectionally analytic in $\C\backslash(\R\cup\Gamma_+\cup\Gamma_-)$;\\
		(2) $M^{(0)}(x,t,k)=\sigma_1M^{(0)}(x,t,-k)\sigma_1$;\\
		(3) $M^{(0)}(x,t,k)\to I,\quad k\to\infty,k\notin\R$;\\
		(4) $M^{(0)}_{+}(x,t,k)=M^{(0)}_{-}(x,t,k)V^{(0)}(x,t,k)$,
		where
		\begin{equation}
			V^{(0)}=
			\begin{cases}
				\begin{pmatrix}
					1&0\\
					-\dfrac{\gamma_p\ee^{2\ii t\theta(k_p)}}{k-k_p}&1
				\end{pmatrix},&\quad k\in\Gamma_+,\\\\
				\begin{pmatrix}
					1&-\dfrac{\gamma_p\ee^{2\ii t\theta(k_p)}}{k+k_p}\\
					0&1
				\end{pmatrix},&\quad k\in\Gamma_-,\\
				\tilde{V}(x,t,k),&\quad k\in\R.
			\end{cases}
		\end{equation}
		\begin{figure}[H]
			\centering
			\begin{tikzpicture}
				\draw[very thick,black!40!green,-latex](-3,0)--(-1.5,0);
				\draw[very thick,black!40!green,-latex](-1.7,0)--(1.5,0);
				\draw[very thick,black!40!green,-](1.3,0)--(3,0);
				
				\filldraw [black] (0,0) circle (1pt)node[below]{$0$};
				\filldraw [black] (3.2,0) circle (0pt)node[]{$\R$};
				
				\draw[black!20!blue, very thick] (0,1) circle (0.35);
				\filldraw [black] (0,1) circle (1pt);
				\draw[black!20!blue, very thick] (0,-1) circle (0.35);
				\filldraw [black] (0,-1) circle (1pt);
				
				\node[]at(0.7,1){$\Gamma_+$};
				\node[]at(0.7,-1){$\Gamma_-$};
				\node[]at(-0.5,1){$k_p$};
				\node[]at(-0.5,-1){$-k_p$};
				
				\draw[very thick,black!40!blue,-latex](0,0.65)--(-0.15,0.65);
				\draw[very thick,black!40!blue,-latex](0,-0.65)--(-0.15,-0.65);
			\end{tikzpicture}
			\caption{The contour for the RH problem with solution $M^{(0)}(x,t,k)$.}
		\end{figure}
		\label{RHP:pole_to_circle}
	\end{RHP}
	
	Letting $\theta^{\prime}(k)=0$, two stationary phase points $\pm k_0$ are obtained, where
	\begin{equation}
		k_0=\sqrt{-\frac{x}{12t}},
	\end{equation}
	in which $k_0\in\R$	for $x<0$ and $k_0\in\ii\R$ for $x>0$. Since
	\begin{equation} \operatorname{Re}(\ii\theta(k))=\operatorname{Im}k(4(\operatorname{Im}k)^2-12(\operatorname{Re}k)^2-\frac{x}{t}),
	\end{equation}
	the sign of $\operatorname{Re}(\ii\theta(k))$ is different for $x>0$ and $x<0$. For $x>0$, denote
	\begin{equation}
		\kappa_0=\sqrt{\frac{x}{4t}}.
	\end{equation}
	
	\begin{figure}[H]
		\centering
		\begin{subfigure}{0.45\textwidth}
			\begin{tikzpicture}
				\fill[blue!10](-3,0)--(3,0)--(3,2.78)--(-3,2.78);
				\fill[red!10](-3,0)--(3,0)--(3,-2.78)--(-3,-2.78);
				\draw[domain=-1.5:1.5,very thick,fill=red!10]plot({\x},{pow(3*\x*\x+1,0.5)});
				\draw[domain=-1.5:1.5,very thick,fill=blue!10]plot({\x},{-pow(3*\x*\x+1,0.5)});

				\draw[very thick,->](-3.15,0)--(3.15,0);
				
				\node[]at(0,2){$+$};
				\node[]at(0,0.5){$-$};
				\node[]at(0,-0.5){$+$};
				\node[]at(0,-2){$-$};
				\node[]at(2.5,0.3){$\operatorname{Re}k$};
				
				\filldraw [black](0,1)circle(1.8pt)node[above]{$\ii\kappa_0$};
				\filldraw [black](0,-1)circle(1.8pt)node[below]{$-\ii\kappa_0$};
			\end{tikzpicture}
			\caption{$k_0\in\ii \R$}
			\label{signReiTheta_a}
		\end{subfigure}
		\begin{subfigure}{0.45\textwidth}
			\begin{tikzpicture}
				\fill[red!10](-3,0)--(3,0)--(3,2.78)--(-3,2.78);
				\fill[blue!10](-3,0)--(3,0)--(3,-2.78)--(-3,-2.78);
				\fill[red!10](-1,0)--(-3,0)--(-3,-2.78)--(-1.9,-2.78);
				\draw[domain=-1.6:0,very thick,fill=red!10]plot({-pow(\x*\x+1,0.5)},{1.732*\x});
				\fill[blue!10](-1,0)--(-3,0)--(-3,2.78)--(-1.9,2.78);
				\draw[domain=0:1.6,very thick,fill=blue!10]plot({-pow(\x*\x+1,0.5)},{1.732*\x});
				\fill[blue!10](1,0)--(3,0)--(3,2.78)--(1.9,2.78);
				\draw[domain=-1.6:0,very thick,fill=red!10]plot({pow(\x*\x+1,0.5)},{1.732*\x});
				\fill[red!10](1,0)--(3,0)--(3,-2.78)--(1.9,-2.78);
				\draw[domain=0:1.6,very thick,fill=blue!10]plot({pow(\x*\x+1,0.5)},{1.732*\x});
				
				\draw[very thick,->](-3.15,0)--(3.15,0);
				
				\node[]at(0,1){$+$};
				\node[]at(0,-1){$-$};
				\node[]at(2,1){$-$};
				\node[]at(2,-1){$+$};
				\node[]at(-2,1){$-$};
				\node[]at(-2,-1){$+$};
				\node[]at(2.5,0.3){$\operatorname{Re}k$};
				
				\filldraw [black](1,0)circle(1.8pt)node[below]{$k_0$};
				\filldraw [black](-1,0)circle(1.8pt)node[below]{$-k_0$};
			\end{tikzpicture}
			\caption{$k_0\in\R$}
			\label{signReiTheta_b}
		\end{subfigure}
		\caption{The signatures of $\operatorname{Re}(\ii \theta)$ for different values of the stationary phase point $k_0\in \ii\R$ and $k_0\in\R$, respectively.}
	\end{figure}
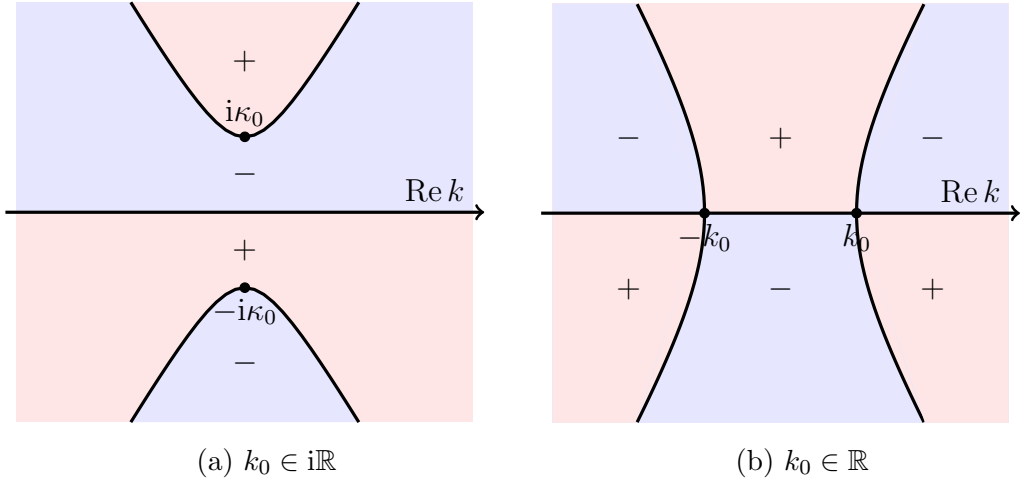

	If $\operatorname{Im}(k_p)>\kappa_0$, denote
	\[D_0(k)=
	\begin{pmatrix}
		T(k)^{-1}&0\\
		0&T(k)
	\end{pmatrix},\]
	where $T(k)=\dfrac{k+k_p}{k-k_p}$. If $0<\operatorname{Im}(k_p)<\kappa_0$, denote $D_0(k)=I$. So define $D$ by
	\begin{equation}
		D(k)=
		\begin{cases}
			\begin{pmatrix}
				1&-\dfrac{k-k_p}{\gamma_p\ee^{2\ii t\theta(k_p)}}\\
				\dfrac{\gamma_p\ee^{2\ii t\theta(k_p)}}{k-k_p}&0
			\end{pmatrix}D_0(k),&\quad k\in U_+,\\
			\begin{pmatrix}
				0&-\dfrac{\gamma_p\ee^{2\ii t\theta(k_p)}}{k+k_p}\\
				\dfrac{k+k_p}{\gamma_p\ee^{2\ii t\theta(k_p)}}&1
			\end{pmatrix}D_0(k),&\quad k\in U_-,\\
			D_0(k),&\quad\text{else}.
		\end{cases}
	\end{equation}
	for $\operatorname{Im}k_p>\kappa_0$, and define $D(k)=I$ for $0<\operatorname{Im}k_p<\kappa_0$. Obviously, $D(k)\to I$ as $k\to \infty$.
	\par
	Take the transformation
	$$
	M^{(1)}(x,t,k)=M^{(0)}(x,t,k)D(k),
	$$
	then the function $M^{(1)}(x,t,k)$ satisfies the following RH problem:
	\begin{RHP}
		Find a $2\times2$ matrix-valued function $M^{(1)}(x,t,k)$ with the following properties:\\
		(1) $M^{(1)}(x,t,\cdot)$ is sectionally analytic on $\C\backslash(\R\cup\Gamma_+\cup\Gamma_-)$ and it has countinus boundary value;\\
		(2) $M^{(1)}(x,t,k)=\sigma_1M^{(1)}(x,t,-k)\sigma_1$;\\
		(3) $M^{(1)}(x,t,k)\to I,\quad k\to\infty,k\notin\R$;\\
		(4) $M^{(1)}_{+}(x,t,k)=M^{(1)}_{-}(x,t,k)V^{(1)}(x,t,k)$ for $k\in \R\cup \Gamma_{+}\cup \Gamma_{-}$. When $\operatorname{Im}k_p>\kappa_0$, the jump matrix $V^{(1)}(x,t,k)$ is
		\begin{equation}
			V^{(1)}=
			\begin{cases}
				\begin{pmatrix}
					1&-\overline{r(k)}\ee^{-2\ii t\theta}T^2(k)\\
					0&1
				\end{pmatrix}
				\begin{pmatrix}
					1&0\\
					r(k)\ee^{2\ii t\theta}T^{-2}(k)&1
				\end{pmatrix}
				=:(b^{(1)}_{-})^{-1}b^{(1)}_{+},&\quad k\in\R,\\
				\begin{pmatrix}
					1&-\dfrac{k-k_p}{\gamma_p\ee^{2\ii t\theta}}T^{2}\\
					0&1
				\end{pmatrix},&\quad k\in\Gamma_{+},\\
				\begin{pmatrix}
					1&0\\
					-\dfrac{k+k_p}{\gamma_p\ee^{2\ii t\theta}}T^{-2}&1
				\end{pmatrix},&\quad k\in\Gamma_{-}.
			\end{cases}
		\end{equation}
		When $0<\operatorname{Im}k_p<\kappa_0$, the jump matrix $V^{(1)}(x,t,k)$ is
		\begin{equation}
			V^{(1)}(k)=V^{(0)}(k).
		\end{equation}
	\end{RHP}
	
	If $\operatorname{Im}k_p=\kappa_0$, since for all $\epsilon>0$, the jump matrixs on $\Gamma_{\pm}$ do not decay, retain the pole condition in this case:
	\begin{equation}
		\Res_{k=k_p}M^{(1)}(x,t,k)=\lim_{k\to k_p}M^{(1)}(x,t,k)
		\begin{pmatrix}
			0&0\\
			\gamma_{p}\ee^{2\ii t\theta(k_p)}&0
		\end{pmatrix},
		\label{res_M^{(1)}_at_kp}
	\end{equation}
	\begin{equation}
		\Res_{k=-k_p}M^{(1)}(x,t,k)=\lim_{k\to -k_p}M^{(1)}(x,t,k)
		\begin{pmatrix}
			0&-\gamma_{p}\ee^{2\ii t\theta(k_p)}\\
			0&0
		\end{pmatrix}.
		\label{res_M^{(1)}_at_-kp}
	\end{equation}
	\begin{Proposition}
		If the function $M^{(sol)}(x,t,k)$ solves the items (2), (3) and (5) in the RH problem $\ref{RHP:tildeM}$, it satisfies
		\begin{equation}
			M^{(sol)}(k)=I+\frac{1}{k-k_p}
			\begin{pmatrix}
				\alpha&0\\
				\beta&0
			\end{pmatrix}
			-\frac{1}{k_p+k}
			\begin{pmatrix}
				0&\beta\\
				0&\alpha
			\end{pmatrix},
			\label{Msol}
		\end{equation}
		where
		\[\alpha=\frac{\ii U_0\ee^{-2U_0x+2U_0^3t}}{\ee^{-2U_0x+2U_0^3t}-4},\quad \beta=\frac{2\ii U_0\ee^{-U_0x+U_0^3t}}{\ee^{-2U_0x+2U_0^3t}-4}.\]
		Then the simple soliton solution of the KdV equation is derived by the reconstruction formula in (\ref{u=F(tildeM)}) as
		\begin{equation}
			u_{sol}(x,t)=-\frac{U_0^2}{2} \operatorname{sech}^2\left(\frac{1}{2}(U_0x-U_0^3t+\log 2)\right).
		\end{equation}
	\end{Proposition}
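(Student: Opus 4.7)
The plan is to leverage the fact that $M^{(sol)}$ is pinned down by analyticity, symmetry, the normalization at infinity, and two residue conditions, all of which are algebraic. First I would observe that the properties (2), (3), (5) of RH problem \ref{RHP:tildeM} for $M^{(sol)}$, combined with the absence of any jump on $\R$ that is implicit in this soliton-only ansatz, force $M^{(sol)}$ to be meromorphic on $\C$ with simple poles only at $\pm k_p$. Together with $M^{(sol)}(k)\to I$ as $k\to\infty$, an application of Liouville's theorem produces the rational ansatz
\[
M^{(sol)}(k)=I+\frac{A}{k-k_p}+\frac{B}{k+k_p},\qquad A,B\in\C^{2\times 2}.
\]
Matching singular parts in the symmetry $M^{(sol)}(k)=\sigma_1 M^{(sol)}(-k)\sigma_1$ then gives $B=-\sigma_1 A\sigma_1$, so the whole problem reduces to determining $A$.

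Because the right-hand side of the residue condition (\ref{res_M_at_kp}) has vanishing second column, the second column of $A$ must vanish as well. Writing $A=\left(\begin{smallmatrix}\alpha & 0 \\ \beta & 0\end{smallmatrix}\right)$ then gives $B=-\left(\begin{smallmatrix}0 & \beta \\ 0 & \alpha\end{smallmatrix}\right)$, which is precisely the structure claimed in (\ref{Msol}). Reading off the regular parts $M^{(sol)}_{12}(k_p)=-\beta/(2k_p)$ and $M^{(sol)}_{22}(k_p)=1-\alpha/(2k_p)$ from the ansatz and substituting into (\ref{res_M_at_kp}) produces the scalar linear system
\[
\alpha=-\frac{\gamma_p\ee^{2\ii t\theta(k_p)}}{2k_p}\,\beta,\qquad \beta=\gamma_p\ee^{2\ii t\theta(k_p)}\Bigl(1-\frac{\alpha}{2k_p}\Bigr).
\]
Plugging in $\gamma_p=-\ii U_0/2$, $2k_p=\ii U_0$ and $2\ii t\theta(k_p)=-U_0 x+U_0^3 t$, eliminating $\beta$ yields $\alpha\bigl(4-\ee^{-2U_0 x+2U_0^3 t}\bigr)=-\ii U_0\,\ee^{-2U_0 x+2U_0^3 t}$, which reproduces the stated formula for $\alpha$; substituting back gives $\beta$.

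Finally, I would feed $M^{(sol)}$ into the reconstruction formula (\ref{u=F(tildeM)}). Expanding $M^{(sol)}(k)=I+(A+B)/k+O(1/k^{2})$, the two limits pick up the $(1,1)$ and $(2,1)$ entries of $A+B$, i.e.\ an explicit linear combination of $\alpha$ and $\beta$. The main obstacle I anticipate is the last algebraic step, turning this rational expression in the exponential $\ee^{U_0 x-U_0^3 t}$ into a $\sech^2$ form. The key is the factorization $\ee^{-2U_0 x+2U_0^3 t}-4=(\ee^{-U_0 x+U_0^3 t}-2)(\ee^{-U_0 x+U_0^3 t}+2)$, which cancels a common factor between numerator and denominator and reduces the combined expression to a rational function with a single linear factor. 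Taking the $x$-derivative and shifting the argument by $\log 2$, the identity $\sech^2(\eta)=4/(\ee^\eta+\ee^{-\eta})^2$ packages the result as $-(U_0^2/2)\sech^2\bigl((U_0 x-U_0^3 t+\log 2)/2\bigr)$, completing the proof.
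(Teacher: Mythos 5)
The paper states this Proposition without any proof, so there is no in-text argument to compare against; your strategy (no jump plus normalization forces a rational ansatz via Liouville, the symmetry reduces it to one matrix $A$, the residue condition at $k_p$ gives a $2\times 2$ linear system, and the reconstruction formula finishes) is exactly the natural one, and your derivation of the system and of the stated coefficients is correct: with $c:=\gamma_p\ee^{2\ii t\theta(k_p)}=-\tfrac{\ii U_0}{2}E$ and $E:=\ee^{-U_0x+U_0^3t}$ one gets $\alpha=\tfrac{E}{2}\beta$, $\beta=c+\tfrac{E}{2}\alpha$, hence $\alpha=\ii U_0E^2/(E^2-4)$ and $\beta=2\ii U_0E/(E^2-4)$, matching \eqref{Msol}.

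The gap is in your final step, which you assert rather than carry out, and it does not go through. With these values, $\alpha+\beta=\ii U_0E(E+2)/\bigl((E-2)(E+2)\bigr)=\ii U_0E/(E-2)$: the factor that cancels is $(E+2)$, not $(E-2)$, so the surviving denominator is $E-2$ and the reconstruction formula gives
\begin{equation*}
u_{sol}=-2\ii\,\partial_x(\alpha+\beta)=\frac{4U_0^2E}{(E-2)^2}=\frac{U_0^2}{2}\operatorname{csch}^2\Bigl(\tfrac12(U_0x-U_0^3t+\log 2)\Bigr),
\end{equation*}
which is singular at the soliton center rather than the bounded $-\tfrac{U_0^2}{2}\operatorname{sech}^2$ claimed. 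In other words, the Proposition as printed is internally inconsistent: the displayed $\alpha,\beta$ and the displayed $u_{sol}$ are incompatible under \eqref{u=F(tildeM)}. The stated soliton is recovered only if $\beta$ carries the opposite sign (equivalently if $c=+\tfrac{\ii U_0}{2}E$, i.e.\ $\gamma_p=+k_p$), for then $\alpha+\beta=\ii U_0E/(E+2)$, the factor $(E-2)$ cancels, and $u_{sol}=-4U_0^2E/(E+2)^2=-\tfrac{U_0^2}{2}\operatorname{sech}^2(\cdots)$. You should actually perform this last computation and flag the sign discrepancy (locating it in $\gamma_p$, the residue condition, or $\beta$); as written, your closing sentence claims a $\operatorname{sech}^2$ identity that is false for the $\alpha,\beta$ you correctly derived.
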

	
	Then deform the RH problem for function $M^{(1)}(x,t,k)$ by the transformation
	\begin{equation}
		M^{(2)}(x,t,k)=
		\begin{cases}
			M^{(1)}(x,t,k)(b^{(1)}_{+}(k))^{-1},&0<\operatorname{Im}k<\epsilon,\\
			M^{(1)}(x,t,k)(b^{(1)}_{-}(k))^{-1},&-\epsilon<\operatorname{Im}k<0,\\
			M^{(1)}(x,t,k),&\text{others}.
		\end{cases}
	\end{equation}
	\par
	Then the function $M^{(2)}(x,t,k)$ satisfies the following RH problem:
	\begin{RHP}
		Find a $2\times2$ matrix-valued function $M^{(2)}(x,t,k)$ with the following properties:\\
		(1) $M^{(2)}(x,t,\cdot)$ is sectionally analytic on $\C\backslash(\Sigma_+\cup\Sigma_-\cup\Gamma_+\cup\Gamma_-)$ and it has countinus boundary value;\\
		(2) $M^{(2)}(x,t,k)=\sigma_1M^{(2)}(x,t,-k)\sigma_1$;\\
		(3) $M^{(2)}(x,t,k)\to I,\quad k\to\infty,\ k\notin\R$;\\
		(4) $M^{(2)}_{+}(x,t,k)=M^{(2)}_{-}(x,t,k)V^{(2)}(x,t,k)$ for $k\in \Sigma_+\cup\Sigma_-\cup\Gamma_+\cup\Gamma_-$ in Figure \ref{counter4.3}, where for $\operatorname{Im}k_p>\kappa_0$ the jump matrix is
		\begin{equation}
			V^{(2)}=
			\begin{cases}
				b^{(1)}_{+}(k),&k\in\Sigma_+,\\
				(b^{(1)}_{-}(k))^{-1}&k\in\Sigma_-,\\
				I,&k\in\R,\\
				V^{(1)},&k\in\Gamma_{\pm}.
			\end{cases}
		\end{equation}
	\end{RHP}
	\begin{figure}[H]
		\centering
		\begin{subfigure}{0.45\textwidth}
			\begin{tikzpicture}
				\draw[very thick,black!40!green,-latex](-3,0)--(0,0);
				\draw[very thick,black!40!green,-](-0.2,0)--(3,0);
				\draw[very thick,black!40!green,-latex](-3,0.5)--(0,0.5);
				\draw[very thick,black!40!green,-](-0.2,0.5)--(3,0.5);
				\draw[very thick,black!40!green,-latex](-3,-0.5)--(0,-0.5);
				\draw[very thick,black!40!green,-](-0.2,-0.5)--(3,-0.5);
				
				\draw[very thick,gray,dashed,domain=-1.5:1.5]plot({\x},{-pow(3*\x*\x+1,0.5)});
				\draw[very thick,gray,dashed,domain=-1.5:1.5]plot({\x},{pow(3*\x*\x+1,0.5)});
				
				\draw[black!20!blue, very thick] (0,1.5) circle (0.35);
				\draw[very thick,black!40!blue,-latex](0,1.15)--(-0.15,1.15);
				\filldraw [black] (0,1.5) circle (1pt);
				\draw[black!20!blue, very thick] (0,-1.5) circle (0.35);
				\draw[very thick,black!40!blue,-latex](0,-1.15)--(-0.15,-1.15);
				\filldraw [black] (0,-1.5) circle (1pt);
				
				\node[]at(3,0.7){$\Sigma_+$};
				\node[]at(3,-0.85){$\Sigma_-$};
				\node[]at(0,2){$\Gamma_+$};
				\node[]at(0,-2.2){$\Gamma_-$};
			\end{tikzpicture}
			\caption{$\operatorname{Im}k_p>\kappa_0$}
		\end{subfigure}
		\begin{subfigure}{0.45\textwidth}
			\begin{tikzpicture}
				\draw[very thick,black!40!green,-latex](-3,0)--(0,0);
				\draw[very thick,black!40!green,-](-0.2,0)--(3,0);
				\draw[very thick,black!40!green,-latex](-3,0.3)--(0,0.3);
				\draw[very thick,black!40!green,-](-0.2,0.3)--(3,0.3);
				\draw[very thick,black!40!green,-latex](-3,-0.3)--(0,-0.3);
				\draw[very thick,black!40!green,-](-0.2,-0.3)--(3,-0.3);
				
				\draw[very thick,gray,dashed,domain=-1.5:1.5]plot({\x},{-pow(3*\x*\x+1.5,0.5)});
				\draw[very thick,gray,dashed,domain=-1.5:1.5]plot({\x},{pow(3*\x*\x+1.5,0.5)});
				
				\draw[black!20!blue, very thick] (0,0.8) circle [radius=0.35];
				\draw[very thick,black!40!blue,-latex](0,0.45)--(-0.15,0.45);
				\filldraw [black] (0,0.8) circle (1pt);
				\draw[black!20!blue, very thick] (0,-0.8) circle [radius=0.35];
				\draw[very thick,black!40!blue,-latex](0,-0.45)--(-0.15,-0.45);
				\filldraw [black] (0,-0.8) circle (1pt);
				
				\node[]at(3,0.7){$\Sigma_+$};
				\node[]at(3,-0.85){$\Sigma_-$};
				\node[]at(0.8,0.8){$\Gamma_+$};
				\node[]at(0.8,-0.8){$\Gamma_-$};
			\end{tikzpicture}
			\caption{$0<\operatorname{Im}k_p<\kappa_0$}
		\end{subfigure}
		\caption{The contour for the RH problem with solution $M^{(2)}(x,t,k)$.}
		\label{counter4.3}
	\end{figure}
	
	\begin{theorem}
		Let $\epsilon>0$ be sufficiently small and let $l>0$ be an integer, then the long-time asymptotics of the solution to the initial problem (\ref{InitialProblem}) in the region of $x/t\geq C$ for some $C>0$ is given below:
		\par
		If $|\frac{x}{t}-U_0^2|<\epsilon$, one has
		\begin{equation}
			u(x,t)=-\frac{U_0^2}{2} \operatorname{sech}^2\left(\frac{1}{2}(U_0x-U_0^3t+\log 2)\right)+O(t^{-l}).
		\end{equation}
		\par
		If $|\frac{x}{t}-U_0^2|\geq\epsilon$, one has
		\begin{equation}
			u(x,t)=O(t^{-l}).
		\end{equation}
	\end{theorem}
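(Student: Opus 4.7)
The plan is a standard nonlinear steepest descent/small-norm argument applied to $M^{(2)}$, carried out separately in three sub-regions of the quadrant $x/t\geq C$ distinguished by the position of the pole $k_p=\ii U_0/2$ relative to the imaginary-axis zero $\ii\kappa_0$ of $\operatorname{Re}(\ii\theta)$. The critical threshold $\operatorname{Im}k_p=\kappa_0$ is exactly $x/t=U_0^2$, which separates the soliton region from the two decay regions. In every case I would first fix the lenses $\Sigma_\pm$ at a small positive distance from the real axis, inside the shaded sign-regions of Figure \ref{signReiTheta_a} and away from $\pm k_p$; this guarantees uniformly for $x/t\geq C$ that $\operatorname{Re}(\mp 2\ii\theta(k))\leq -c\,t$ on $\Sigma_\pm$, so that the $\Sigma_\pm$-jumps of $M^{(2)}$ satisfy $\|V^{(2)}-I\|_{L^\infty\cap L^2(\Sigma_\pm)}=O(\ee^{-ct})$.

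For the upper decay region $x/t>U_0^2+\epsilon$ one has $\operatorname{Im}k_p<\kappa_0-c(\epsilon)$, hence $\operatorname{Re}(\ii\theta(k_p))<-c(\epsilon)$, so the prefactor $\gamma_p\ee^{2\ii t\theta(k_p)}$ appearing on the $\Gamma_\pm$-jumps in the construction of $M^{(1)}$ with $D_0=I$ is itself $O(\ee^{-c(\epsilon)t})$. Every jump of $M^{(2)}$ is then exponentially close to $I$ and the small-norm theorem gives $M^{(2)}=I+O(\ee^{-ct})$; pulling this back through $M^{(1)}\to M^{(0)}\to\tilde M$ and inserting into \eqref{u=F(tildeM)} yields $u(x,t)=O(t^{-l})$ for every $l$. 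The lower decay region $C\leq x/t<U_0^2-\epsilon$ is handled by the mirror construction: $\operatorname{Re}(\ii\theta(k_p))>c(\epsilon)$ so $\ee^{2\ii t\theta(k_p)}$ grows exponentially, and one must activate the Blaschke factor $T(k)=(k+k_p)/(k-k_p)$ in $D_0$ as in the construction of $M^{(1)}$; after this, the $\Gamma_\pm$-jumps of $M^{(2)}$ carry the reciprocal $1/(\gamma_p\ee^{2\ii t\theta(k_p)})=O(\ee^{-c(\epsilon)t})$ and the same small-norm conclusion applies, giving $u(x,t)=O(t^{-l})$.

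The soliton region $|x/t-U_0^2|<\epsilon$ is the main obstacle, because there $\ee^{2\ii t\theta(k_p)}=\ee^{U_0(U_0^2t-x)}$ is bounded and bounded away from $0$, so the $\Gamma_\pm$-jumps of $M^{(2)}$ are $O(1)$ and the pole genuinely contributes. The plan is to use the explicit soliton parametrix $M^{(sol)}(x,t,k)$ of \eqref{Msol} as the model: by the preceding proposition it solves items (2), (3), (5) of the RH problem for $\tilde M$ (no real-axis jump) and produces exactly the simple soliton under the reconstruction formula. Applying the same $\tilde M\to M^{(0)}$ circle-deformation and the same $D$ to $M^{(sol)}$ yields a model $M^{(sol,2)}$ satisfying every condition of the $M^{(2)}$-problem except for the $\Sigma_\pm$-jumps. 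Then $M^{(err)}(x,t,k):=M^{(2)}(x,t,k)\,M^{(sol,2)}(x,t,k)^{-1}$ is analytic across $\Gamma_\pm$, tends to $I$ at infinity, and its only jump lives on $\Sigma_\pm$ and equals $M^{(sol,2)}_-(b^{(1)}_\pm-I)(M^{(sol,2)}_-)^{-1}$, hence is exponentially small provided $M^{(sol,2)}$ is uniformly bounded on $\Sigma_\pm$. This uniform bound is the genuine technical point: the rational formula \eqref{Msol} has movable singularities at the soliton peak $U_0(U_0^2t-x)=\log 4$, so on the side of the critical curve $x/t=U_0^2$ where those singularities intrude one must also apply the Blaschke dressing to $M^{(sol,2)}$, producing a bounded parametrix that still reconstructs the same soliton. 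With this uniform $L^\infty$-control in place the small-norm theorem yields $M^{(err)}=I+O(\ee^{-ct})$, hence $M^{(2)}=M^{(sol,2)}+O(\ee^{-ct})$, and the reconstruction formula produces $u(x,t)=u_{sol}(x,t)+O(t^{-l})$ with $u_{sol}$ the simple soliton stated in the theorem. The chief technical care across the three sub-regions is ensuring the constants $c,c(\epsilon)$ are uniform up to the boundaries $x/t=U_0^2\pm\epsilon$, which is automatic from the strict inequalities built into the statement.
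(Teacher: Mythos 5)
Your proposal is correct and is essentially the approach the paper itself takes: the paper offers no argument of its own beyond the chain of deformations $\tilde M\to M^{(0)}\to M^{(1)}\to M^{(2)}$ and a citation of the appendix of \cite{Grunert-Teschl-2009}, and your three-case small-norm analysis --- exponential decay of the $\Gamma_\pm$- and $\Sigma_\pm$-jumps on either side of $x/t=U_0^2$, with the conjugation by $T(k)=(k+k_p)/(k-k_p)$ activated when $\operatorname{Im}k_p>\kappa_0$, and the one-soliton parametrix $M^{(sol)}$ plus an error Riemann--Hilbert problem in the band $|x/t-U_0^2|<\epsilon$ --- is exactly what that appendix carries out, built here on the paper's own objects. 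The only quibbles are minor: in that band $\ee^{2\ii t\theta(k_p)}=\ee^{U_0(U_0^2t-x)}$ is \emph{not} bounded and bounded away from $0$ (its exponent ranges over $(-U_0\epsilon t,U_0\epsilon t)$), so the dichotomy $\operatorname{Im}k_p\gtrless\kappa_0$ must be retained inside the soliton region as well --- which you in effect do by dressing the parametrix with the same Blaschke factor --- and the apparent singularity of \eqref{Msol} at the soliton peak is an artifact of a sign in the printed formula (with the correctly normalized residue constant the one-soliton matrix is regular for all real $(x,t)$), so your proposed workaround is harmless but not actually needed.
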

	\begin{proof}
		The proof can be seen in the appendix of Ref. \cite{Grunert-Teschl-2009}.
	\end{proof}
	\subsubsection{Dispersive wave region}
	\label{U0>0:DispersiveReigon}
	When $x/t\geq -C$ for some $C>0$, the pole $k_p$ lies on the real axis. In this case, the signature of $\operatorname{Re}(\ii\theta)$ is shown in Figure $\ref{signReiTheta_b}$, thus introduce $T(k)$ by
	\begin{equation}
		T(k)=\frac{k+k_p}{k-k_p}\exp\left(\frac{1}{2\pi\ii}\int_{-k_0}^{k_0}\frac{\log(1-|r(s)|^2)}{s-k}\dd s \right)
		\label{T(k)}.
	\end{equation}
	Using the Plemelj's formulas, it can be checked that
	\[T_{+}(k)=T_{-}(k)(1-|r(k)|^2),\quad |k|<k_0.\]
	And denote
	\begin{equation}
		D(k)=
		\begin{cases}
			\begin{pmatrix}
				1&-\dfrac{k-k_p}{\gamma_p\ee^{2\ii t\theta(k_p)}}\\
				\dfrac{\gamma_p\ee^{2\ii t\theta(k_p)}}{k-k_p}&0
			\end{pmatrix}D_0(k),&\quad k\in U_{+},\operatorname{Im} k_p>\kappa_0,\\
			\begin{pmatrix}
				0&\dfrac{\gamma_p\ee^{2\ii t\theta(k_p)}}{k+k_p}\\
				-\dfrac{k+k_p}{\gamma_p\ee^{2\ii t\theta(k_p)}}&1
			\end{pmatrix}D_0(k),&\quad k\in U_{-},\operatorname{Im} k_p>\kappa_0,\\
			D_0(k),&\quad \quad\text{others},
		\end{cases}
	\end{equation}
	where $D_0(k)=(T(k))^{-\sigma_3}$. Take the transformation
	$$
	M^{(1)}(x,t,k)=M^{(0)}(x,t,k)D(k),
	$$
	then the function $M^{(1)}(x,t,k)$ satisfies the following RH Problem:
	
	\begin{RHP}
		Find a $2\times2$ matrix-valued function $M^{(1)}(x,t,k)$ with the following properties:\\
		(1) $M^{(1)}(x,t,\cdot)$ is sectionally analytic on $\C\backslash(\R\cup\Gamma_+\cup\Gamma_-)$ and it has countinus boundary value;\\
		(2) $M^{(1)}(x,t,k)=\sigma_1M^{(1)}(x,t,-k)\sigma_1$;\\
		(3) $M^{(1)}(x,t,k)\to I,\quad k\to\infty,k\notin\R$;\\
		(4) $M^{(1)}_{+}(x,t,k)=M^{(1)}_{-}(x,t,k)V^{(1)}(x,t,k)$ for $k\in \R\cup\Gamma_+\cup\Gamma_-$, where for $\operatorname{Im}k_p>\kappa_0$, the jump matrix is
		\begin{equation}
			V^{(1)}(k)=
			\begin{cases}
				\begin{pmatrix}
					1&\dfrac{k-k_p}{\gamma_p\ee^{2\ii t\theta(k_p)}}T^2\\
					0&1
				\end{pmatrix},&\quad k\in \Gamma_+,\\
				\begin{pmatrix}
					1&0\\
					-\dfrac{k+k_p}{\gamma_p\ee^{2\ii t\theta(k_p)}}T^{-2}&1
				\end{pmatrix},&\quad k\in \Gamma_-,\\
				(b^{(1)}_-(k))^{-1}b^{(1)}_{+}(k),&\quad |k|>k_0,\\
				(B^{(1)}_-(k))^{-1}B^{(1)}_{+}(k),&\quad |k|<k_0,
			\end{cases}
		\end{equation}
		where
		\[b^{(1)}_{-}(k)=\begin{pmatrix}
			1&\overline{r(k)}\ee^{-2\ii t\theta}T^2(k)\\
			0&1
		\end{pmatrix},\quad
		b^{(1)}_{+}(k)=\begin{pmatrix}
			1&0\\
			r(k)\ee^{2\ii t\theta}T^{-2}(k)&1
		\end{pmatrix},
		\]
		and
		\[B^{(1)}_{+}(k)=\begin{pmatrix}
			1&-\frac{\overline{r(k)}}{1-|r(k)|^2}\ee^{-2\ii t\theta(x,t)}T^2_{+}(k)\\
			0&1
		\end{pmatrix},\quad
		B^{(1)}_{-}(k)=\begin{pmatrix}
			1&0\\
			\frac{r(k)}{1-|r(k)|^2}\ee^{2\ii t\theta(x,t)}T^{-2}_{-}(k)&1
		\end{pmatrix},\]
		while for $\operatorname{Im}0<k_p<\kappa_0$, the jump matrix is
		\begin{equation}
			V^{(1)}(k)=
			\begin{cases}
				\begin{pmatrix}
					1&0\\
					-\dfrac{\gamma_p\ee^{2\ii t\theta(k_p)}}{k-k_p}T^{-2}&1
				\end{pmatrix},&\quad k\in \Gamma_+,\\\\
				\begin{pmatrix}
					1&-\dfrac{\gamma_p\ee^{2\ii t\theta(k_p)}}{k+k_p}T^{2}\\
					0&1
				\end{pmatrix},&\quad k\in \Gamma_-,\\
				(b^{(1)}_-(k))^{-1}b^{(1)}_{+}(k),&\quad |k|>k_0,\\
				(B^{(1)}_-(k))^{-1}B^{(1)}_{+}(k),&\quad |k|<k_0.
			\end{cases}
		\end{equation}
		\label{RH:usingTk}
	\end{RHP}
	
	Further, deform the RH problem with solution $M^{(1)}(x,t,k)$ by the following transformation
	\begin{equation}
		M^{(2)}(x,t,k)=
		\begin{cases}
			M^{(1)}(x,t,k)(b^{(1)}_{+}(k))^{-1},&\text{$k$ between $\R$ and $\Sigma^1_{+}$},\\
			M^{(1)}(x,t,k)(b^{(1)}_{-}(k))^{-1},&\text{$k$ between $\R$ and $\Sigma^1_{-}$},\\
			M^{(1)}(x,t,k)(B^{(1)}_{+}(k))^{-1},&\text{$k$ between $\R$ and $\Sigma^2_{+}$},\\
			M^{(1)}(x,t,k)(B^{(1)}_{-}(k))^{-1},&\text{$k$ between $\R$ and $\Sigma^2_{-}$},\\
			M^{(1)}(x,t,k),&	\text{others}.
		\end{cases}
	\end{equation}
	
	\begin{figure}[H]
		\centering
		\begin{tikzpicture}
			\draw[very thick,black!20!blue,->](-3.2,0)--(3.2,0);
			\draw[very thick,gray,dashed,domain=-1.633:1.633]plot({-pow(\x*\x+1,0.5)},{1.732*\x});
			\draw[very thick,gray,dashed,domain=-1.633:1.633]plot({pow(\x*\x+1,0.5)},{1.732*\x});
			
			\draw[very thick,black!40!green,-latex](-3,0.5)--(-2.25,0.5);
			\draw[very thick,black!40!green,-latex](-2.4,0.5)--(-1.5,0.5)--(-0.5,-0.5)--(0,-0.5);
			\draw[very thick,black!40!green,-latex](-0.2,-0.5)--(0.5,-0.5)--(1.5,0.5)--(2.25,0.5);
			\draw[very thick,black!40!green,-](2.05,0.5)--(3,0.5);
			\draw[very thick,black!40!green,-latex](-3,-0.5)--(-2.25,-0.5);
			\draw[very thick,black!40!green,-latex](-2.4,-0.5)--(-1.5,-0.5)--(-0.5,0.5)--(0,0.5);
			\draw[very thick,black!40!green,-latex](-0.5,0.5)--(0.5,0.5)--(1.5,-0.5)--(2.25,-0.5);
			\draw[very thick,black!40!green,-](2.05,-0.5)--(3,-0.5);
			
			\filldraw [black](1,0)circle(1.8pt)node[below]{$k_0$};
			\filldraw [black](-1,0)circle(1.8pt)node[below]{$-k_0$};
			
			\draw[black!20!blue, very thick] (0,1.8) circle [radius=0.35];
			\draw[very thick,black!40!blue,-latex](0,1.45)--(-0.15,1.45);
			\filldraw [black] (0,1.8) circle (1pt);
			\draw[black!20!blue, very thick] (0,-1.8) circle [radius=0.35];
			\draw[very thick,black!40!blue,-latex](0,-1.45)--(-0.15,-1.45);
			\filldraw [black] (0,-1.8) circle (1pt);
			
			\node[]at(2.25,0.8){$\Sigma^1_{+}$};
			\node[]at(2.25,-0.9){$\Sigma^1_{-}$};
			\node[]at(-2.25,0.8){$\Sigma^1_{+}$};
			\node[]at(-2.25,-0.9){$\Sigma^1_{-}$};
			\node[]at(0,0.8){$\Sigma^2_{+}$};
			\node[]at(0,-0.9){$\Sigma^2_{-}$};
			\node[]at(0,2.3){$\Gamma_+$};
			\node[]at(0,-2.5){$\Gamma_-$};
		\end{tikzpicture}
		\caption{The contour for RH problem with solution $M^{(2)}(x,t,k)$.}
		\label{counter-4.5}
	\end{figure}
	\begin{RHP}
		Find a $2\times2$ matrix-valued function $M^{(2)}(x,t,k)$ with the following properties:\\
		(1) $M^{(2)}(x,t,\cdot)$ is sectionally analytic on $\C\backslash(\R\cup\Gamma_+\cup\Gamma_-)$ and it has countinus boundary value;\\
		(2) $M^{(2)}(x,t,k)=\sigma_1M^{(2)}(x,t,-k)\sigma_1$;\\
		(3) $M^{(2)}(x,t,k)\to I,\quad k\to\infty,k\notin\R$;\\
		(4) $M^{(2)}_{+}(x,t,k)=M^{(2)}_{-}(x,t,k)V^{(2)}(x,t,k)$ for $k\in \R\cup\Gamma_+\cup\Gamma_-$ in Figure \ref{counter-4.5},
		where the jump matrix is
		\begin{equation}
			V^{(2)}(k)=
			\begin{cases}
				b^{(1)}_{+}(k),&k\in\Sigma^1_{+},\\
				(b^{(1)}_{-}(k))^{-1},&k\in\Sigma^1_{-},\\
				B^{(1)}_{+}(k),&k\in\Sigma^2_{+},\\
				(B^{(1)}_{-}(k))^{-1},&k\in\Sigma^2_{-},\\
				V^{(1)}(k),&	\text{others}.
			\end{cases}
		\end{equation}
	\end{RHP}
	
	Denote the contours $\Sigma^c(\pm k_0)=(\Sigma^1_{+}\cup\Sigma^1_{-}\cup\Sigma^2_{+}\cup\Sigma^2_{-})\cap D(\pm k_0, \epsilon)$ for some small $\epsilon>0$. Let $M^{(\pm k_0)}(x,t,k)$ be two functions satisfying RH problem on contours $\Sigma^c(\pm k_0)$ with the same jump matrices as $M^{(2)}(x,t,k)$, then the functions $M^{(\pm k_0)}(x,t,k)$ solve the following RH problem:
	\begin{RHP}\label{RHP-4.6}
		Find a $2\times2$ matrix-valued function $M^{(\pm k_0)}(x,t,k)$ with the following properties:\\
		(1) $M^{(\pm k_0)}(x,t,\cdot)$ is sectionally analytic on $\C\backslash(\R\cup\Gamma_+\cup\Gamma_-)$ and it has continuous boundary value;\\
		(2) $M^{(\pm k_0)}(x,t,k)\to I,\quad k\to\infty,k\notin\R$;\\
		(3) $M^{(\pm k_0)}_{+}(x,t,k)=M^{(\pm k_0)}_{-}(x,t,k)V^{(\pm k_0)}(x,t,k)$ for $k\in \R\cup\Gamma_+\cup\Gamma_-$, where the jump matrix is
		\begin{equation}
			V^{(\pm k_0)}=
			\begin{cases}
				V^{(2)}(k),&k\in\Sigma^c(\pm k_0),\\
				I,&\text{others}.
			\end{cases}
		\end{equation}
	\end{RHP}
	
	In order to solve the RH problem \ref{RHP-4.6}, take the coordinate transformation
	\[\zeta=\sqrt{48k_0}(k-k_0),\quad k=k_0+\frac{\zeta}{\sqrt{48k_0}},\]
	then we have
	\[\ee^{2\ii t\theta(k)}=t\left( -16\ii k_0^3+\frac{\ii}{2}\zeta^2+\frac{8\ii\zeta^3}{48k_0\sqrt{48k_0}}\right), \]
	and
	\begin{equation}
		r(k)\ee^{2\ii t\theta}T(k)^{-2}=\ee^{2\ii t\theta(k(\zeta))}\zeta^{-2\ii\nu}r(k(\zeta))(\zeta+2k_0\sqrt{48k_0})^{2\ii\nu}\tilde{T}(k(\zeta)).
	\end{equation}
	where $\nu=-\frac{1}{2\pi}\log(1-|r(k_0)|^2)$.
	\par
	Rewrite $T(k)$ in Eq. $(\ref{T(k)})$ as
	\begin{equation}
		T(k)=\left(\frac{k-k_0}{k+k_0} \right)^{\ii\nu}\tilde{T}(k),
	\end{equation}
	where
	\begin{equation} \tilde{T}(k)=\frac{k+k_p}{k-k_p}\exp\left(\frac{1}{2\pi\ii}\int_{-k_0}^{k_0}\log\left(\frac{1-|r(s)|^2}{1-|r(k_0)|^2} \right)\frac{\dd s}{s-k}  \right).
	\end{equation}
	\par
	Define
	\begin{equation}
		r_0=r(k_0)\tilde{T}(k_0)^{-2}\ee^{2\ii\nu\log(2k_0\sqrt{48k_0})},
	\end{equation}
	then the following lemma holds:
	\begin{lemma}\label{Lemma-4.3}
		Assume $x/t\leq -C$ for constant $C>0$, then for all $0<\alpha<1$, there exist two constants $\delta_0>0$ and $L=L(C,\alpha,U_0)>0$ such that
		\begin{equation}\label{HolderCondition}
			\left|r(k(\zeta))(\zeta+2k_0\sqrt{48k_0})^{2\ii\nu}\tilde{T}(k(\zeta))-r_0 \right|\leq L|\zeta|^{\alpha} ,\quad |\zeta|<\delta_0.
		\end{equation}
	\end{lemma}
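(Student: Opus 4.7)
The plan is to estimate the left-hand side factor by factor, writing the quantity to be bounded as the difference of a product minus the product of the individual $\zeta=0$ values, then using a telescoping argument. Set
\[
F(\zeta):=r(k(\zeta))\,(\zeta+2k_0\sqrt{48k_0})^{2\ii\nu}\,\tilde T(k(\zeta))^{-2},
\]
so that $F(0)=r_0$ by the very definition of $r_0$; the inequality to prove is $|F(\zeta)-F(0)|\le L|\zeta|^{\alpha}$. (We work with the factor $\tilde T^{-2}$ consistent with the preceding display in the paper; the argument is identical in either convention.) The hypothesis $x/t\le -C$ gives $k_0\ge\sqrt{C/12}$, so $k_0$ is bounded away from $0$, from $-k_0$, and from the pole $k_p=\ii U_0/2$; this uniform separation will be used throughout.

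First I would handle the two easy factors. The map $\zeta\mapsto k(\zeta)=k_0+\zeta/\sqrt{48k_0}$ is affine, and $r$ is rational with its only pole at $\ii U_0/2$, which lies a fixed distance from the segment $k(\zeta)$ for $|\zeta|$ small. Hence $\zeta\mapsto r(k(\zeta))$ is real-analytic, in particular Lipschitz, on a neighbourhood of $0$, giving $|r(k(\zeta))-r(k_0)|\le L_1|\zeta|\le L_1\delta_0^{1-\alpha}|\zeta|^{\alpha}$. Similarly, $(\zeta+2k_0\sqrt{48k_0})^{2\ii\nu}$ is smooth near $\zeta=0$ because $2k_0\sqrt{48k_0}$ is bounded below, so the difference from its $\zeta=0$ value is $O(|\zeta|)$, again controlled by $|\zeta|^{\alpha}$.

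The main obstacle is the Hölder continuity of $\tilde T(k)$ at $k=k_0$. Write $\tilde T(k)=\frac{k+k_p}{k-k_p}\,\ee^{\phi(k)}$ with
\[
\phi(k)=\frac{1}{2\pi\ii}\int_{-k_0}^{k_0}\frac{h(s)}{s-k}\,\dd s,\qquad h(s):=\log\!\frac{1-|r(s)|^2}{1-|r(k_0)|^2}.
\]
The rational prefactor is smooth near $k_0$ since $k_p$ is off the real axis. For $\phi$, I would establish $|\phi(k(\zeta))-\phi(k_0^+)|\le L_2|\zeta|^{\alpha}$ via the standard Plemelj--Privalov programme. The density $h$ is $C^{\infty}$ on $[-k_0,k_0]\setminus\{0\}$ with an integrable $2\log|s|$ singularity at $s=0$ (since $1-|r(s)|^2=4s^2/(U_0^2+4s^2)$), and it vanishes at both endpoints $s=\pm k_0$ because $|r(\pm k_0)|^2=|r(k_0)|^2$. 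The vanishing at $s=k_0$ is what kills the endpoint singularity: write
\[
\phi(k)-\phi(k_0)=\frac{1}{2\pi\ii}\int_{-k_0}^{k_0}h(s)\left(\frac{1}{s-k}-\frac{1}{s-k_0}\right)\dd s
=\frac{k-k_0}{2\pi\ii}\int_{-k_0}^{k_0}\frac{h(s)\,\dd s}{(s-k)(s-k_0)},
\]
and split the integral into a neighbourhood of $s=k_0$ and its complement. On the complement both $|s-k|$ and $|s-k_0|$ are bounded below, so that piece is $O(|k-k_0|)$. Near $s=k_0$, I would use $|h(s)|\le C|s-k_0|$ (from smoothness of $h$ there) to bound the local contribution by $C|k-k_0|\int_{|s-k_0|<\epsilon}\frac{\dd s}{|s-k|}$, which yields the desired $O(|k-k_0|^{\alpha})$ for any $0<\alpha<1$ by the usual logarithmic estimate. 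The logarithmic singularity at $s=0$ sits a fixed distance from $k_0$ and contributes only smooth dependence on $k$.

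Finally, combine the pieces. Hölder continuity of $\phi$ implies the same for $\ee^{\phi}$ (since $\phi$ is locally bounded, giving $|\ee^{\phi(k)}-\ee^{\phi(k_0)}|\le\ee^{\|\phi\|_\infty}|\phi(k)-\phi(k_0)|$), hence for $\tilde T(k(\zeta))$, and by the chain rule also for $\tilde T(k(\zeta))^{-2}$ after using that $\tilde T(k_0)\ne 0$. A telescoping estimate
\[
|F(\zeta)-F(0)|\le\sum_j\Big(\prod_{i<j}|\mathrm{factor}_i(\zeta)|\Big)\cdot\big|\mathrm{factor}_j(\zeta)-\mathrm{factor}_j(0)\big|\cdot\Big(\prod_{i>j}|\mathrm{factor}_i(0)|\Big)
\]
then gives the result, with $\delta_0$ chosen so small that each factor stays bounded away from $0$ and $\infty$, and $L$ depending only on $C$, $\alpha$ and $U_0$ through the uniform bounds on $k_0$, $|r|$, and $\|\phi\|_\infty$ on $|k-k_0|\le\delta_0/\sqrt{48k_0}$.
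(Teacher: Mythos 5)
Your proposal is correct and follows essentially the same route as the paper: a telescoping/triangle-inequality decomposition over the three factors, Lipschitz bounds for $r(k(\zeta))$ and the power $(\zeta+2k_0\sqrt{48k_0})^{2\ii\nu}$ from explicit formulas and the lower bound $k_0\geq\sqrt{C/12}$, and a H\"older estimate $O(|\zeta|\,|\log|\zeta||)=O(|\zeta|^{\alpha})$ for the Cauchy-integral part of $\tilde T$ coming from the vanishing of the density $\log\frac{1-|r(s)|^2}{1-|r(k_0)|^2}$ at the endpoint $s=k_0$. The only cosmetic difference is that you obtain the last estimate by the difference-of-kernels (Plemelj--Privalov) computation where the paper invokes integration by parts, and you correctly flag the $\tilde T$ versus $\tilde T^{-2}$ typo that the paper's own proof also silently corrects.
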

	\begin{proof}
		1. The triangle inequality shows that
		\begin{equation}
			\begin{aligned}
				&\left|r(k(\zeta))(\zeta+2k_0\sqrt{48k_0})^{2\ii\nu}\tilde{T}(k(\zeta))-r_0 \right|\\
				\leq&|\tilde{T}(k(\zeta))^{-2}-\tilde{T}(k_0)^{-2}||(\zeta+2k_0\sqrt{48k_0})^{2\ii\nu}r(k(\zeta))|\\
				&+|r(k(\zeta))\tilde{T}(k_0)^{-2}|\left|(\zeta+2k_0\sqrt{48k_0})^{2\ii\nu}-\ee^{2\ii\nu\log(2k_0\sqrt{48k_0})} \right| \\
				&+|\tilde{T}(k_0)^{-2}\ee^{2\ii\nu\log(2k_0\sqrt{48k_0})}||r(k(\zeta))-r(k_0)|.
			\end{aligned}
			\label{tri ineq}
		\end{equation}
		
		2. It is easily to know that $|\tilde{T}(k_0)|=1,\left| \ee^{2\ii\nu\log(2k_0\sqrt{48k_0})}\right|=1,|r(k(\zeta))|\leq 1$ and
		\begin{equation}
			|(\zeta+2k_0\sqrt{48k_0})^{2\ii\nu}|=|\ee^{-2\nu\arg(\zeta+2k_0\sqrt{48k_0})}|\leq \ee^{-2\nu\pi}\leq L',
			\label{r{k}holder}
		\end{equation}
		where $L'$ depends on constant $C$.
		
		3. Reminding $(\ref{r(k)})$, it is immediate that
		\begin{equation}
			\begin{aligned}
				|r(k(\zeta))-r(k_0)|&=\left|\frac{k_p}{k_p-k_0} \right|\frac{|\zeta|}{\sqrt{48k_0}} \frac{1}{|k_p-k_0-\zeta/\sqrt{48k_0}|}\\
				&\leq C'|\zeta|\frac{1}{||k_p-k_0|-|\zeta|/\sqrt{48k_0}|}.
			\end{aligned}
		\end{equation}
		\par
		Let $\delta_1=\sqrt{12k_0}|k_p-k_0|$, then for $|\zeta|<\delta_1$, one has
		\begin{equation}
			|r(k(\zeta))-r(k_0)|\leq C'|\zeta|\frac{2}{|k_p-k_0|}\leq C'|\zeta|,
			\label{r(k)-r(k0)}
		\end{equation}
		where constant $C'$ depends on $\alpha,\delta_1,U_0$ and $C$.
		\par
		The inequality $(\ref{r{k}holder})$ indicates that
		\begin{equation}
			\begin{aligned}
				&|\ee^{2\ii\nu\log(\zeta+2k_0\sqrt{48k_0})}-\ee^{2\ii\nu\log(2k_0\sqrt{48k_0})}|\\
				\leq&\left( \sup_{0\leq v\leq1}|\ee^{2\ii\nu\log(v\zeta+2k_0\sqrt{48k_0})}|\right) |2\ii\nu(\log(\zeta+2k_0\sqrt{48k_0})-\log(2k_0\sqrt{48k_0}))|\\
				\leq&L'|\log(\zeta+2k_0\sqrt{48k_0})-\log(2k_0\sqrt{48k_0})|\\
				\leq&C'\left(\sup_{0\leq v\leq1}|\log(v\zeta+2k_0\sqrt{48k_0})| \right)|\zeta|\\
				\leq& C'|\zeta| .
			\end{aligned}
		\end{equation}
		\par
		4. Finally, consider the term $|\tilde{T}(k(\zeta))^{-2}-\tilde{T}(k_0)^{-2}|$. Consider the term in $\tilde{T}(k)$ that contains the integral
		\begin{equation}
			\chi(k):=\frac{1}{2\pi\ii}\int_{-k_0}^{k_0}\log\left(\frac{1-|r(s)|^2}{1-|r(k_0)|^2} \right)\frac{\dd s}{s-k}.
		\end{equation}
		\par
		The expression of $r(k)$ in (\ref{r(k)}) shows that $|\chi(k_0)|<\infty$, we have
		\begin{equation}
			\begin{aligned}
				|\ee^{-2\chi(k(\zeta))}-\ee^{-2\chi(k_0)}|\leq&|\ee^{-2(\chi(k(\zeta))-\chi(k_0))}-1|\\
				\leq&\left( \sup_{0\leq v\leq1}\ee^{-2v(\chi(k(\zeta))-\chi(k_0))}\right) |\chi(k(\zeta))-\chi(k_0)|\\
				\leq& L'|\chi(k(\zeta))-\chi(k_0)|.
			\end{aligned}
			\label{e(chi)}
		\end{equation}
		Since
		\begin{equation}
			\log\left(\frac{1-|r(s)|^2}{1-|r(k_0)|^2} \right)\sim s-k_0,\quad s\to k_0,
		\end{equation}
		then integrating by parts yields that there is a $\delta_2>0$ such that
		\begin{equation}\label{inequality-4.37}
			|\chi(k(\zeta)-\chi(k_0))|\leq \frac{|\zeta|}{\sqrt{48k_0}}\left|\log\left| \frac{\zeta}{\sqrt{48k_0}}\right|  \right| ,\quad |\zeta|<\delta_2.
		\end{equation}
		Substituting the inequality (\ref{inequality-4.37}) into $(\ref{e(chi)})$, it follows that
		\begin{equation}
			|\ee^{-2\chi(k(\zeta))}-\ee^{-2\chi(k_0)}|\leq \frac{|\zeta|}{\sqrt{48k_0}}\left|\log\left| \frac{\zeta}{\sqrt{48k_0}}\right|  \right|\leq C'|\zeta|^\alpha,
			\label{e(chi)-e(chi0)}
		\end{equation}
		for all $0<\alpha<1$.
		\par
		Moreover, it is easy to see that $\frac{k(\zeta)-k_p}{k(\zeta)+k_p}$ is bounded in $D(0,\delta_3)$ and satisfies
		\begin{equation}
			\left| \frac{k(\zeta)-k_p}{k(\zeta)+k_p}-\frac{k_0-k_p}{k_0+k_p}\right| <L'|\zeta|,\quad |\zeta|<\delta_3,
			\label{ra(k)-ra(k0)}
		\end{equation}
		for certain small $\delta_3>0$.
		\par
		Then taking $\delta_0=\min\{\delta_1,\delta_2,\delta_3\}$ and substituting $(\ref{r(k)-r(k0)})$$(\ref{e(chi)-e(chi0)})$ and $(\ref{ra(k)-ra(k0)})$ into $(\ref{tri ineq})$, it can be gotten that
		\begin{equation}
			\left|r(k(\zeta))(\zeta+2k_0\sqrt{48k_0})^{2\ii\nu}\tilde{T}(k(\zeta))-r \right|\leq L|\zeta|^{\alpha} ,\quad |\zeta|<\delta_0,
		\end{equation}
		where $L$ depends on the constants $\alpha,C,U_0$ and $\delta_0$. This completes the proof of this lemma.
	\end{proof}
	\par
	By using Lemma \ref{Lemma-4.3} and Theorem \ref{PCmodle} in the appendix, it can be gotten that
	\begin{equation}
		M^{(k_0)}(x,t,k)=I+\frac{1}{\sqrt{48k_0}(k-k_0)}\frac{\ii}{t^{1/2}}
		\begin{pmatrix}
			0&-\beta\\
			\bar{\beta}&0
		\end{pmatrix}+O(t^{-\alpha}),
	\end{equation}
	where $\frac{1}{2}<\alpha<1$ and
	\begin{equation}
		\begin{aligned}
			\beta&=\sqrt{\nu}\ee^{\ii(\pi/4-\arg(r)+\arg(\Gamma(\ii\nu)))}\ee^{-2\ii t\theta(k_0)}t^{-\ii\nu}\\
			&=\sqrt{\nu}\ee^{\ii(\pi/4-\arg(r(k_0))+\arg(\Gamma(\ii\nu)))}(192k_0^3)^{\ii\nu}\tilde{T}(k_0)^2\ee^{-16\ii t k_0^3}t^{-\ii\nu}.
		\end{aligned}
	\end{equation}
	The symmetry $M^{(-k_0)}(x,t,k)=\sigma_1M^{(k_0)}(x,t,-k)\sigma_1$ shows that
	\begin{equation}
		M^{(-k_0)}(x,t,k)=I+\frac{1}{\sqrt{48k_0}(k+k_0)}\frac{\ii}{t^{1/2}}
		\begin{pmatrix}
			0&-\bar{\beta}\\
			\beta&0
		\end{pmatrix}+O(t^{-\alpha}).
	\end{equation}
	\par
	Thus the long-time asymptotics of the solution to the initial problem (\ref{InitialProblem}) can be given in the following theorem.
	
	\begin{theorem}
		In the dispersive wave region, i.e., $x/t\leq -C$ for some $C>0$, the asymptotic solution to the initial problem (\ref{InitialProblem}) is expressed by
		\begin{equation}
			u(x,t)=\sqrt{\frac{4\nu(k_0)k_0}{3t}}\sin(16tk_0^3-\nu(k_0)\log(192tk_0^3)+\phi(k_0))+O(t^{-\alpha}),
		\end{equation}
		where $k_0=\sqrt{-\dfrac{x}{12t}}$, $r(k)=\dfrac{\ii U_0}{\ii U_0-2k}$ and
		\begin{equation}
			\begin{aligned}
				\nu(k_0)&=\frac{1}{\pi}\log(1-|r(k_0)|^2),\\				\phi(k_0)&=\frac{\pi}{4}-\arg(r(k_0))+\arg(\Gamma(\ii\nu(k_0)))+4\arctan\left(\frac{U_0}{2k_0} \right)-2\ii\chi(k_0).
			\end{aligned}
		\end{equation}
	\end{theorem}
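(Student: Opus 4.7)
The plan is to close the nonlinear steepest descent analysis set up earlier by combining the two local parametrices $M^{(\pm k_0)}$ into a global approximation to $M^{(2)}$ via a small-norm Riemann--Hilbert argument, and then to unwind the chain of transformations $M \to M^{(0)} \to M^{(1)} \to M^{(2)}$ to extract the leading asymptotics of $u(x,t)$ through the reconstruction formula \eqref{u=F(tildeM)}.

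First, I would form the error problem $E(x,t,k) := M^{(2)}(x,t,k)\bigl(M^{(k_0)}(x,t,k)M^{(-k_0)}(x,t,k)\bigr)^{-1}$ outside small disks $D(\pm k_0,\epsilon)$ and argue that $E$ solves a RH problem with a jump matrix $V^E$ close to the identity. Away from $\pm k_0$, the jump matrices $b^{(1)}_\pm, B^{(1)}_\pm$ of $M^{(2)}$ decay exponentially in $t$ because of the sign structure of $\operatorname{Re}(\ii\theta)$ in Figure \ref{signReiTheta_b}, so $\|V^E - I\|_{L^\infty\cap L^2} = O(t^{-\alpha})$ for any $\tfrac12 < \alpha < 1$. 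By standard small-norm RH theory this yields
\begin{equation*}
E(x,t,k) = I + \frac{1}{2\pi\ii}\int \frac{\mu(s)(V^E(s)-I)}{s-k}\,\dd s,
\end{equation*}
with $\mu - I = O(t^{-\alpha})$ in $L^2$, and the dominant contribution comes from the boundaries of the disks around $\pm k_0$, where the parametrices $M^{(\pm k_0)}$ have the explicit $t^{-1/2}$ expansions already written down.

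Next, I would extract the large-$k$ coefficients. From the product structure, the $1/k$ coefficient of $M^{(2)}$ receives contributions from $M^{(k_0)}$ and $M^{(-k_0)}$ located at different points, giving
\begin{equation*}
M^{(2)}(x,t,k) = I + \frac{1}{k}\cdot\frac{\ii}{\sqrt{48k_0}\,t^{1/2}}\begin{pmatrix} 0 & -(\beta+\bar\beta) \\ \beta+\bar\beta & 0\end{pmatrix} + O(t^{-\alpha}),
\end{equation*}
after symmetrization. To recover $\tilde M$, I would invert the transformations: on $\C\setminus(\Sigma^1_\pm \cup \Sigma^2_\pm)$ the matrices $M^{(2)}$ and $M^{(1)}$ agree, and $M^{(1)} = M^{(0)} D(k)$ with $D(k) \to I$ as $k\to\infty$ but carrying the $T(k) = \bigl(\tfrac{k-k_0}{k+k_0}\bigr)^{\ii\nu}\tilde T(k)$ factor whose large-$k$ expansion contributes a purely imaginary shift; the residue deformation producing $M^{(0)}$ only modifies $M$ inside the disks $U_\pm$ and hence does not alter the $1/k$ coefficient at infinity. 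Plugging into \eqref{u=F(tildeM)} I obtain
\begin{equation*}
u(x,t) = \frac{4}{\sqrt{48k_0}\,t^{1/2}}\,\partial_x \operatorname{Im}\beta + O(t^{-\alpha}),
\end{equation*}
and substituting the explicit $\beta = \sqrt{\nu}\,e^{\ii(\pi/4 - \arg r(k_0) + \arg\Gamma(\ii\nu))}(192k_0^3)^{\ii\nu}\tilde T(k_0)^2 e^{-16\ii t k_0^3} t^{-\ii\nu}$ yields the oscillatory formula, using that $\partial_x$ hitting the rapidly oscillating exponent $e^{-16\ii t k_0^3}$ produces the dominant factor.

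The main obstacle will be the careful bookkeeping of the phase $\phi(k_0)$, and in particular showing that $\tilde T(k_0)^2$ contributes precisely the term $4\arctan(U_0/(2k_0))$ together with the $-2\ii\chi(k_0)$ piece. This requires computing $\arg\bigl(\tfrac{k_0+k_p}{k_0-k_p}\bigr)^2$ with $k_p = \ii U_0/2$, which gives $2\arg\bigl(\tfrac{k_0+\ii U_0/2}{k_0-\ii U_0/2}\bigr) = 4\arctan(U_0/(2k_0))$, and separately evaluating the Cauchy integral piece $\exp(\tfrac{1}{\pi\ii}\int \cdots)$ at $k_0$, whose argument supplies the $-2\ii\chi(k_0)$ contribution once combined with the constant normalization. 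A secondary technical point is justifying that the residue deformation used to produce $M^{(0)}$ is harmless in this regime: since $U_0 > 0$ implies $k_p$ is purely imaginary and for $x/t \le -C$ one has $k_0 \in \R$ with $\operatorname{Im} k_p = U_0/2$ bounded away from the real axis, the disks $U_\pm$ can be chosen disjoint from $\pm k_0$ and from the contours $\Sigma^{1,2}_\pm$, so the jumps on $\Gamma_\pm$ decay and contribute only to the error $O(t^{-\alpha})$. Once these algebraic identifications are made, combining them with the $t^{-1/2}$ prefactor from the parabolic cylinder model (Theorem \ref{PCmodle}) and the factor $\sqrt{48k_0}$ from the coordinate rescaling gives exactly the prefactor $\sqrt{4\nu(k_0)k_0/(3t)}$ in the statement.
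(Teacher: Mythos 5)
Your proposal follows essentially the same route as the paper: build the local parametrices at $\pm k_0$ from the parabolic-cylinder model, remove them from $M^{(2)}$ to get a small-norm (Beals--Coifman) problem whose jumps are exponentially close to $I$ away from the stationary points and $I+O(t^{-1/2})$ on the disk boundaries, then read off the $1/k$ coefficient and differentiate the oscillatory phase in the reconstruction formula, with the $4\arctan(U_0/(2k_0))$ term coming from $\arg\bigl(\tfrac{k_0+k_p}{k_0-k_p}\bigr)^2$ and the $-2\ii\chi(k_0)$ from the Cauchy-integral part of $\tilde T(k_0)^2$. Apart from a minor bookkeeping slip ($\operatorname{Re}\beta$ versus $\operatorname{Im}\beta$ before the $x$-derivative is applied, which only shifts where the $\pi/2$ enters the phase), this matches the paper's argument.
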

	\begin{proof}
		Introduce the function $M^{s}(x,t,k)$ by
		\begin{equation}
			M^s(x,t,k)=
			\begin{cases}
				M^{(2)}(x,t,k)(M^{(\pm k_0)}(x,t,k))^{-1},&|k\pm k_0|\leq \epsilon,\\
				M^{(2)}(x,t,k),&\text{others},
			\end{cases}
		\end{equation}
		then the jump matrix of the RH problem for $M^s(x,t,k)$ is given by
		\begin{equation}
			V^s(k)=
			\begin{cases}
				(M^{(\pm k_0)}(k))^{-1},&|k\pm k_0|=2\epsilon,\\
				M^{(\pm k_0)}(k)V^{(2)}(k)(M^{(\pm k_0)}(k))^{-1},&k\in \Sigma^{(2)},\epsilon<|k\pm k_0|<2\epsilon,\\
				I,&k\in \Sigma^{(2)},|k\pm k_0|<\epsilon,\\
				V^{(2)}(k),&\text{others}.
			\end{cases}
		\end{equation}
		\par
		It is easy to prove that the jump matrices on $\Sigma^s\backslash (D(k_0,2\epsilon)\cup D(-k_0,\epsilon))$ are exponentially close to $I$ as $t\to \infty$ in sense of $\L^2$ and $\L^\infty$ norms, and close to $I$ at the rate of $t^{-1/2}$ on $D(\pm k_0,2\epsilon)$. The Beals-Coifman theory \cite{Beals-Coifman-1984} shows that
		\begin{equation}
			\begin{aligned}
				M^s(k)&=I+\frac{1}{2\pi\ii}\int_{\Sigma^s}\frac{\mu^s(z) w^s(z)}{z-k}\dd z\\
				&=I+\sum_{j=\pm k_0}\frac{1}{2\pi\ii}\int_{D(j,2\epsilon)}\frac{\mu^s(z)(M^{(j)}(z)^{-1}-I)}{z-k}\dd z+O(\ee^{-ct})\\
				&=I+\frac{1}{t^{1/2}}\frac{1}{2\pi\ii}\sum_{j=\pm k_0}M^{(j)}_1\int_{D(j,2\epsilon)}\frac{1}{z-j}\frac{1}{z-k}+O(t^{-\alpha})\\
				&=I+\frac{1}{t^{1/2}}\left( \frac{M^{(k_0)}_1}{k-k_0}+\frac{M^{(-k_0)}_1}{k+k_0}\right)+ O(t^{-\alpha}).
			\end{aligned}
		\end{equation}
		\par
		Because the deformations of the RH problems above are local, taking the limit in the same direction as the imaginary axis in equation ($\ref{u=F(tildeM)}$) yields the asymptotic solution of the form
		\begin{equation}
			\begin{aligned}
				u(x,t)&=-2\ii\partial_x\lim_{k\to\infty}(kM^s(k))\\
				&=-2\ii\partial_x\left(\frac{1}{\sqrt{48k_0}}\frac{\ii}{t^{1/2}}(\bar{\beta}+\beta) \right)+O(\ee^{-ct})\\
				&=\partial_x\left(\frac{2}{\sqrt{3tk_0}}\operatorname{Re}(\beta) \right) +O(\ee^{-ct})\\				
				&=\sqrt{\frac{4\nu(k_0)k_0}{3t}}\sin(16tk_0^3-\nu(k_0)\log(192tk_0^3)+\phi(k_0))+O(t^{-\alpha}).
			\end{aligned}
		\end{equation}
	\end{proof}

	\subsubsection{Self-similar region}
	
	In the self-similar region, it is aimed to deform the RH problem \ref{RHP:tildeM} into Painlev\'e II model problem in the appendix. Denoting $\tau=tk_0^3$, first consider the case $k_0\in\R$ and $\tau\leq C$ for constant $C>0$, then it is seen that $k_0\leq C^{1/3}t^{-1/3}$.
	\par
	In this case, the presence of pole $k_p$ will make the discussion of the problem more complicated, so one can perform the following transformation to simplify the problem:
	\begin{equation}
		M^{(1)}_P(x,t,k)=\tilde{M}(x,t,k)(M^{(sol)}(x,t,k))^{-1},
	\end{equation}
	where the function $M^{(sol)}(x,t,k)$ is given by ($\ref{Msol}$) with the large $k$ asymptotics
	\[M^{(sol)}(x,t,k)=I+\frac{M^{(sol)}_1}{k}+O(k^{-2}).\]
	
	\begin{RHP}
		Find a $2\times2$ matrix-valued function $M^{(1)}_P(x,t,k)$ with the following properties:\\
		(1) $M^{(1)}_P(x,t,\cdot)$ is sectionally analytic in $\C\backslash\R$;\\
		(2) $M^{(1)}_P(x,t,k)=\sigma_1M^{(1)}_P(x,t,-k)\sigma_1$;\\
		(3) $M^{(1)}_P(x,t,k)\to I,\quad k\to\infty,k\notin\R$;\\
		(4) $M^{(1)}_{P+}(x,t,k)=M^{(1)}_{P-}(x,t,k)V^{(1)}_P(x,t,k)$ for $k\in\R$,
		where the jump matrix is
		\begin{equation}
			V^{(1)}_P(k)=
			\begin{pmatrix}
				1-|r(k)|^2&-\overline{r(k)}{\ee}^{-2\ii t\theta(x,t)}\\
				r(k){\ee}^{2\ii t\theta(x,t)}&1
			\end{pmatrix}.
		\end{equation}
		\begin{figure}[H]
			\centering
			\begin{tikzpicture}
				\draw[very thick,black!40!green,-latex](-3,0)--(-1.5,0);
				\draw[very thick,black!40!green,-latex](-1.7,0)--(1.5,0);
				\draw[very thick,black!40!green,-](1.3,0)--(3,0);
				\filldraw [black] (0,0) circle (1pt)node[below]{$0$};
				\node[]at(3,0.3){$\Sigma^{(1)}_P=\R$};
			\end{tikzpicture}
			\caption{The contour for the RH problem with solution $M^{(1)}_P(x,t,k)$.}
		\end{figure}
		\label{RHP:MP1}
	\end{RHP}
	Thus the solution of KdV equation can be expressed as
	\begin{equation}
		u(x,t)=u_{sol}(x,t)-2\ii\partial_x\left[\lim_{k\to\infty}k(M^{(1)}_{P,11}-1)+\lim_{k\to\infty}(kM^{(1)}_{P,21})\right].
	\end{equation}
	Specifically, since $\tau<C$, we further have
	\begin{equation}
		u(x,t)=-2\ii\partial_x\left[\lim_{k\to\infty}k(M^{(1)}_{P,11}-1)+\lim_{k\to\infty}(kM^{(1)}_{P,21})\right]+\ee^{-ct}.
	\end{equation}
	\par
	The next task is to deform the RH problem \ref{RHP:MP1}. To do so, denote
	\[b^{(1)}_{P+}(k)=\ee^{-\ii t\theta(x,t)\hat{\sigma}_3}
	\begin{pmatrix}
		1&0\\
		r(k)&1
	\end{pmatrix},\quad
	b^{(1)}_{P-}(k)=\ee^{-\ii t\theta(x,t)\hat{\sigma}_3}
	\begin{pmatrix}
		1&r^*(k)\\
		0&1
	\end{pmatrix},\]
	then $V^{(1)}_P(k)=(b^{(1)}_{P-}(k))^{-1}b^{(1)}_{P+}(k)$. Now scale $M^{(2)}_P(k)=M^{(1)}_P((3t)^{-1/3}k)$, then the jump matrix of $M^{(2)}_P(k)$ on $\R$ is $V^{(2)}_P(k)=V^{(1)}_P((3t)^{-1/3}k)$ and it follows that
	\[t\theta((3t)^{-1/3}k)=\frac{4}{3}k^3+sk,\quad s:=(3t)^{-1/3}x,\]
	\[b^{(2)}_{P\pm}(k)=b^{(1)}_{P\pm}((3t)^{-1/3}k).\]
	
	Then in order to open the jump contour outside the points $\pm C^{1/3}$ in Figure \ref{Jump-contour-Mp3}, take the transformation
	\begin{equation}
		M^{(3)}_P(x,t,k)=
		\begin{cases}
			M^{(2)}_P(x,t,k)(b^{(2)}_{P+}(k))^{-1},&k\in\Omega^{(3)}_{P1}\cup\Omega^{(3)}_{P3},\\
			M^{(2)}_P(x,t,k)(b^{(2)}_{P-}(k))^{-1},&k\in\Omega^{(3)}_{P4}\cup\Omega^{(3)}_{P6},\\
			M^{(2)}_P(x,t,k),               &k\in\Omega^{(3)}_{P2}\cup\Omega^{(3)}_{P2}.\\
		\end{cases}
	\end{equation}
	Thus the function $M^{(3)}_P(x,t,k)$ solves the RH problem below:
	\begin{RHP}
		Find a $2\times2$ matrix-valued function $M^{(3)}_P(x,t,k)$ with the following properties:\\
		(1) $M^{(3)}_P(x,t,\cdot)$ is sectionally analytic in $\C\backslash\Sigma^{(3)}_P$;\\
		(2) $M^{(3)}_P(x,t,k)=\sigma_1M^{(3)}_P(x,t,-k)\sigma_1$;\\
		(3) $M^{(3)}_P(x,t,k)\to I$ as $k\to\infty,\ k\notin\R$;\\
		(4) $M^{(3)}_{P+}(x,t,k)=M^{(3)}_{P-}(x,t,k)V^{(3)}_P(x,t,k)$ for $k\in \Sigma^{(3)}_P$, where the jump matrix is
		\begin{equation}
			V^{(3)}_P(k)=
			\begin{cases}
				b^{(2)}_{P+}(k),&k\in\Sigma^{(3)}_{P2}\cup\Sigma^{(3)}_{P3},\\
				(b^{(2)}_{P-}(k))^{-1},&k\in\Sigma^{(3)}_{P1}\cup\Sigma^{(3)}_{P4},\\
				V^{(2)}_P(k),&k\in\Sigma^{(3)}_{P5}.
			\end{cases}
		\end{equation}
	\end{RHP}
	\begin{figure}[H]
		\centering
		\begin{tikzpicture}
			\draw[very thick,black!40!green,-latex](-4,1.5)--(-3.25,0.75);
			\draw[very thick,black!40!green,-](-3.45,0.95)--(-2.5,0);
			\draw[very thick,black!40!green,-latex](-2.5,0)--(0,0);
			\draw[very thick,black!40!green,-](-0.2,0)--(2.5,0);
			\draw[very thick,black!40!green,-latex](2.5,0)--(3.25,0.75);
			\draw[very thick,black!40!green,-](3.05,0.55)--(4,1.5);
			\draw[very thick,black!40!green,-latex](-4,-1.5)--(-3.25,-0.75);
			\draw[very thick,black!40!green,-](-3.45,-0.95)--(-2.5,0);
			\draw[very thick,black!40!green,-latex](2.5,0)--(3.25,-0.75);
			\draw[very thick,black!40!green,-](3.05,-0.55)--(4,-1.5);
			
			\draw[very thick,gray,dashed](2.5,0)--(4.5,0);
			\draw[very thick,gray,dashed](-2.5,0)--(-4.5,0);
			\node[]at(5,0){$\Sigma^{(3)}_P$};
			\node[]at(4.2,-1.7){$\Sigma^{(3)}_{P1}$};
			\node[]at(4.2,1.7){$\Sigma^{(3)}_{P2}$};
			\node[]at(-4.3,1.7){$\Sigma^{(3)}_{P3}$};
			\node[]at(-4.3,-1.7){$\Sigma^{(3)}_{P4}$};
			\node[]at(0,.5){$\Sigma^{(3)}_{P5}$};
			\filldraw [black] (2,0) circle (1.5pt)node[below]{$C^{1/3}$};
			\filldraw [black] (-2,0) circle (1.5pt)node[below]{$-C^{1/3}$};
			\filldraw [black] (1.2,0) circle (1.5pt)node[above]{$t^{1/3}k_0$};
			\filldraw [black] (-1.2,0) circle (1.5pt)node[above]{$-t^{1/3}k_0$};
			\filldraw [black] (0,0) circle (0pt)node[below]{$0$};
			\node[]at(4,0.5){$\Omega^{(3)}_{P1}$};
			\node[]at(0,1.5){$\Omega^{(3)}_{P2}$};
			\node[]at(-4,0.5){$\Omega^{(3)}_{P3}$};
			\node[]at(-4,-0.5){$\Omega^{(3)}_{P4}$};
			\node[]at(0,-1.5){$\Omega^{(3)}_{P5}$};
			\node[]at(4,-0.5){$\Omega^{(3)}_{P6}$};
		\end{tikzpicture}
		\caption{The contour for the RH problem with solution $M^{(3)}_P(x,t,k)$.}
		\label{Jump-contour-Mp3}
	\end{figure}

	Using the expansion $r(k)=r(0)+\int_{0}^{k}r'(\zeta)\dd\zeta$ for $k\in D(0,k_p/2)$ and $||r'||_{\L^{\infty}(D(0,k_p/2))}<\infty$, it can be obtained that
	\begin{equation}
		b^{(2)}_{P+}(k)=\ee^{-\ii (\frac{4}{3}k^3+sk)\hat{\sigma}_3}
		\begin{pmatrix}
			1&0\\
			r(0)&1
		\end{pmatrix}
		+\ee^{-\ii (\frac{4}{3}k^3+sk)\hat{\sigma}_3}
		\begin{pmatrix}
			1&0\\
			O(k/(3t)^{1/3})&1
		\end{pmatrix},
	\end{equation}
	then we have
	\begin{equation}
		\left\|\frac{k}{(3t)^{1/3}}\ee^{2\ii (\frac{4}{3}k^3+sk)} \right\|_{\L^1(\Sigma^{(3)}_{P2}\cup\Sigma^{(3)}_{P3})\cap\L^\infty(\Sigma^{(3)}_{P2}\cup\Sigma^{(3)}_{P3})}\leq ct^{-1/3},
	\end{equation}
	and the estimates for $V^{(3)}_P(k)$ on $\Sigma^{(3)}_{P1}\cup\Sigma^{(3)}_{P4}\cup\Sigma^{(3)}_{P5}$ can be given analogously.
	\begin{RHP}
		Find a $2\times2$ matrix-valued function $M^{(4)}_P(x,t,k)$ with the following properties:\\
		(1) $M^{(4)}_P(x,t,\cdot)$ is sectionally analytic in $\C\backslash\Sigma^{(4)}_P$, where $\Sigma^{(4)}_P=\Sigma^{(3)}_P$;\\
		(2) $M^{(4)}_P(x,t,k)=\sigma_1M^{(4)}_P(x,t,-k)\sigma_1$;\\
		(3) $M^{(4)}_P(x,t,k)\to I$ as $k\to\infty,\ k\notin\R$;\\
		(4) $M^{(4)}_{P+}(x,t,k)=M^{(4)}_{P-}(x,t,k)V^{(4)}_P(x,t,k)$ for $k\in \Sigma^{(4)}_P$, where the jump matrix is
		\begin{equation}
			V^{(4)}_P(k)=
			\begin{cases}
				b^{(4)}_{P+}(k)=\ee^{-\ii (\frac{4}{3}k^3+sk)\hat{\sigma}_3}
				\begin{pmatrix}
					1&0\\
					r(0)&1
				\end{pmatrix},&k\in\Sigma^{(4)}_{P2}\cup\Sigma^{(4)}_{P3},\\
				(b^{(4)}_{P-}(k))^{-1}=\ee^{-\ii (\frac{4}{3}k^3+sk)\hat{\sigma}_3}
				\begin{pmatrix}
					1&-r^*(0)\\
					0&1
				\end{pmatrix},&k\in\Sigma^{(4)}_{P1}\cup\Sigma^{(4)}_{P4},\\
				V^{(4)}_P(k)=\ee^{-\ii (\frac{4}{3}k^3+sk)\hat{\sigma}_3}
				\begin{pmatrix}
					1-|r(0)|^2&-r^*(0)\\
					r(0)&1
				\end{pmatrix},&k\in\Sigma^{(4)}_{P5}.
			\end{cases}
		\end{equation}
	\end{RHP}
	\par
	Take the transformation
	\begin{equation}
		M^{(5)}_P(x,t,k) =
		\begin{cases}
			M^{(4)}_P(x,t,k), & k \in \Omega_{P1}^{(5)} \cup \Omega_{P3}^{(5)} \cup \Omega_{P5}^{(5)} \cup \Omega_{7P}^{(5)}, \\
			M^{(4)}_P(x,t,k) (b_{P+}^{(4)})^{-1}, & k \in \Omega_{P2}^{(5)} \cup \Omega_{P4}^{(5)}, \\
			M^{(4)}_P(x,t,k) (b_{P-}^{(4)})^{-1}, & k \in \Omega_{P6}^{(5)} \cup \Omega_{P8}^{(5)},
		\end{cases}
	\end{equation}
	where the regions $\Omega^{(5)}_{Pj}~(j=1,2,\cdots,8)$ are described in Figure \ref{Contour-MP5}.
	\begin{figure}[H]
		\centering
		\begin{tikzpicture}
			\draw[dashed,very thick,gray,-latex](-4,1.5)--(-3.25,0.75);
			\draw[dashed,very thick,gray,-](-3.45,0.95)--(-2.5,0);
			\draw[dashed,very thick,gray,-latex](-2.5,0)--(0,0);
			\draw[dashed,very thick,gray,-](-0.2,0)--(2.5,0);
			\draw[dashed,very thick,gray,-latex](2.5,0)--(3.45,0.95);
			\draw[dashed,very thick,gray,-](3.25,0.75)--(4,1.5);
			\draw[dashed,very thick,gray,-latex](-4,-1.5)--(-3.25,-0.75);
			\draw[dashed,very thick,gray,-](-3.45,-0.95)--(-2.5,0);
			\draw[dashed,very thick,gray,-latex](2.5,0)--(3.45,-0.95);
			\draw[dashed,very thick,gray,-](3.25,-0.75)--(4,-1.5);
			
			\draw[very thick,black!40!green,-latex](-0.866,-1.5)--(-0.433,-0.75);
			\draw[very thick,black!40!green,-](-0.6,-1.05)--(0,0);
			\draw[very thick,black!40!green,-latex](0,0)--(0.6,1.05);
			\draw[very thick,black!40!green,-](0.433,0.75)--(0.866,1.5);
			\draw[very thick,black!40!green,-latex](-0.866,1.5)--(-0.433,0.75);
			\draw[very thick,black!40!green,-](-0.6,1.05)--(0,0);
			\draw[very thick,black!40!green,-latex](0,0)--(0.6,-1.05);
			\draw[very thick,black!40!green,-](0.433,-0.75)--(0.866,-1.5);
			
			\node[]at(5,0){$\Sigma^{(5)}_P$};
			\node[]at(4.2,1.7){$\Sigma^{(5)}_{P1}$};
			\node[]at(0.866,1.7){$\Sigma^{(5)}_{P2}$};
			\node[]at(-0.866,1.7){$\Sigma^{(5)}_{P3}$};
			\node[]at(-4.3,1.7){$\Sigma^{(5)}_{P4}$};
			\node[]at(-4.3,-1.7){$\Sigma^{(5)}_{P5}$};
			\node[]at(-0.866,-1.7){$\Sigma^{(5)}_{P6}$};
			\node[]at(0.866,-1.7){$\Sigma^{(5)}_{P7}$};
			\node[]at(4.2,-1.7){$\Sigma^{(5)}_{P8}$};
			
			\filldraw [black] (2,0) circle (1.5pt)node[below]{$C^{1/3}$};
			\filldraw [black] (-2,0) circle (1.5pt)node[below]{$-C^{1/3}$};
			\filldraw [black] (1.2,0) circle (1.5pt)node[above]{$t^{1/3}k_0$};
			\filldraw [black] (-1.2,0) circle (1.5pt)node[above]{$-t^{1/3}k_0$};
			\filldraw [black] (0,0) circle (0pt)node[below]{$0$};
			
			\node[]at(3.8,0){$\Omega^{(5)}_{P1}$};
			\node[]at(2,1.5){$\Omega^{(5)}_{P2}$};
			\node[]at(0,1.5){$\Omega^{(5)}_{P3}$};
			\node[]at(-2,1.5){$\Omega^{(5)}_{P4}$};
			\node[]at(-3.8,0){$\Omega^{(5)}_{P5}$};
			\node[]at(-2,-1.5){$\Omega^{(5)}_{P6}$};
			\node[]at(0,-1.5){$\Omega^{(5)}_{7}$};
			\node[]at(2,-1.5){$\Omega^{(5)}_{P8}$};
		\end{tikzpicture}
		\caption{The contour for RH problem of $M^{(5)}_P(x,t,k)$.}
		\label{Contour-MP5}
	\end{figure}
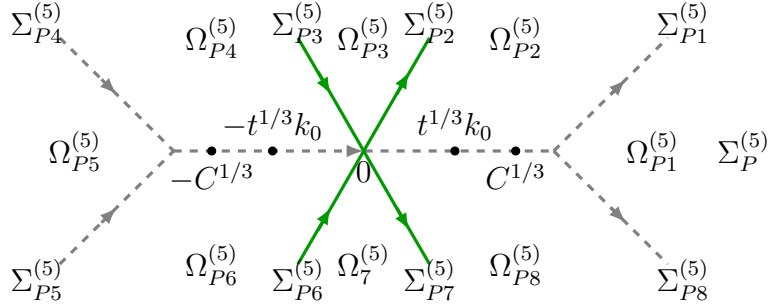
	After some calculations, it is found that the function $M^{(5)}_P(x,t,k)$ only has jumps across the contours $\Sigma^{(5)}_{Pj}~(j=2,3,6,7)$. So the RH problem for function $M^{(5)}_P(x,t,k)$ is obtained as:
	\begin{RHP}
		Find a $2\times2$ matrix-valued function $M^{(5)}_P(x,t,k)$ with the following properties:\\
		(1) $M^{(5)}_P(x,t,\cdot)$ is sectionally analytic in $\C\backslash(\Sigma^{(5)}_{P2}\cup\Sigma^{(5)}_{P3}\cup\Sigma^{(5)}_{P6}\cup\Sigma^{(5)}_{P7})$;\\
		(2) $M^{(5)}_P(x,t,k)=\sigma_1M^{(5)}_P(x,t,-k)\sigma_1$;\\
		(3) $M^{(5)}_P(x,t,k)\to I,\  k\to\infty$;\\
		(4) $M^{(5)}_{P+}(x,t,k)=M^{(5)}_{P-}(x,t,k)V^{(5)}_P(x,t,k)$ for $k\in \Sigma^{(5)}_{P2}\cup\Sigma^{(5)}_{P3}\cup\Sigma^{(5)}_{P6}\cup\Sigma^{(5)}_{P7}$ in Figure \ref{Contour-MP5}, where the jump matrix is
		\begin{equation}
			V^{(5)}_P(k)=
			\begin{cases}
				\ee^{-\ii (\frac{4}{3}k^3+sk)\hat{\sigma}_3}\begin{pmatrix}
					1&0\\
					r(0)&1
				\end{pmatrix},&k\in\Sigma^{(5)}_{P2}\cup\Sigma^{(5)}_{P3},\\\
				\ee^{-\ii (\frac{4}{3}k^3+sk)\hat{\sigma}_3}\begin{pmatrix}
					1&-r^*(0)\\
					0&1
				\end{pmatrix},&k\in\Sigma^{(5)}_{P6}\cup\Sigma^{(5)}_{P7}.\\
			\end{cases}
		\end{equation}
	\end{RHP}
	
	In order to match the jump matrix of the RH problem for $M^{(5)}_P(x,t,k)$ with the jump matrix of Theorem $\ref{Painleve-II model}$ in the appendix, perform the following transformation:
	\begin{equation}
		M^{(6)}_P(x,t,k)=
		\begin{cases}
			M^{(6)}_P(x,t,k),&k\in\Omega^{(6)}_{P1}\cup\Omega^{(6)}_{P2}\cup\Omega^{(6)}_{P4}\cup\Omega^{(6)}_{P5},\\
			M^{(6)}_P(x,t,k)V^{(5)}_P(k),&k\in\Omega^{(6)}_{P3},\\
			M^{(6)}_P(x,t,k)(V^{(5)}_P(k))^{-1},&k\in\Omega^{(6)}_{P6},
		\end{cases}
	\end{equation}
	then the $M^{(6)}(x,t,k)$ satisfies the conditions of Theorem \ref{Painleve-II model} with the parameters given by
	\[p=r(0),\ q=-r(0),\ r=0.\]
	
	Making use of the reconstruction formula in (\ref{u=F(tildeM)}), the long-time asymptotics of the solution to the initial problem (\ref{InitialProblem}) can be given in the following theorem.
	\begin{theorem} In the self-similar region, i.e., $\tau<C$ for some $C>0$, the asymptotic solution to the initial problem (\ref{InitialProblem}) can be described by the Painlev\'e II equation in the form
		\begin{equation}
			u(x,t)=\frac{1}{(3t)^{2/3}}\left( y^{2}\left( \frac{x}{(3t)^{1/3}}\right) +y'\left(\frac{x}{(3t)^{1/3}}\right)\right) +O(t^{-2/3}).
		\end{equation}
	\end{theorem}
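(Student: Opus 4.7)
The plan is to treat the chain of deformations $\tilde M\to M_P^{(1)}\to\cdots\to M_P^{(6)}$ already set up in the excerpt as a reduction to the Painlevé II model RH problem, and then control the error terms produced at each stage. Two points need to be checked before invoking the model problem. First, the factor $M^{(sol)}(x,t,k)^{-1}$ used to define $M_P^{(1)}$ contributes $u_{sol}(x,t)$ to the reconstruction formula, but on the self-similar window $x=s(3t)^{1/3}$ with $\tau=tk_0^3<C$ the argument of the $\operatorname{sech}^2$ tends to $-\infty$ at rate $t$, so $u_{sol}=O(\mathrm{e}^{-ct})$ and can be absorbed into the error. Second, after the rescaling $k\mapsto(3t)^{-1/3}k$ the pole $k_p=\ii U_0/2$ is sent to $(3t)^{1/3}\ii U_0/2\to\infty$, while the stationary phase points $\pm t^{1/3}k_0$ stay in the fixed compact set $[-C^{1/3},C^{1/3}]$, so all interesting structure lives in a bounded region of the $k$-plane.

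Next I would quantify the error from freezing $r(k)$ at $r(0)=1$ on the opened lens contours. Writing $r(k)=r(0)+\int_0^k r'(\zeta)\dd\zeta$ on $D(0,k_p/2)$ with $\|r'\|_\infty$ bounded, the difference $b_{P+}^{(4)}-b_{P+}^{(2)}$ carries a factor $k/(3t)^{1/3}$ dressed with the exponential $\mathrm{e}^{-2\ii(\frac{4}{3}k^3+sk)}$, which decays along the chosen lens rays. A routine $L^1\cap L^2\cap L^\infty$ estimate on $\Sigma_{Pj}^{(3)}$ gives $\|V_P^{(3)}-V_P^{(4)}\|=O(t^{-1/3})$; on the bounded segment $\Sigma_{P5}^{(3)}$ between $\pm t^{1/3}k_0$ the same bound follows from the explicit $r(k)$ and the oscillatory factor. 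Combined with the Beals–Coifman machinery, this yields $M_P^{(4)}=M_P^{(3)}(I+O(t^{-1/3}))$ locally uniformly in $k$, so the leading coefficient read off from $M_P^{(3)}$ equals that read off from $M_P^{(4)}$ up to $O(t^{-2/3})$ after differentiation in $x$.

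The third step is the identification with Painlevé II. After the final $M_P^{(5)}\to M_P^{(6)}$ swap the jump matrices on the four rays are exactly those of Theorem \ref{Painleve-II model} in the appendix with Stokes data
\begin{equation*}
p=r(0)=1,\qquad q=-r(0)=-1,\qquad r=0,
\end{equation*}
the very triple attached to the Ablowitz–Segur solution $y(s;1,-1,0)$. The appendix theorem then identifies the $1/k$ coefficient of $M_P^{(6)}$ as an explicit expression in $y(s)$ and $y'(s)$, with $s=(3t)^{-1/3}x$. Undoing the scaling $k\mapsto(3t)^{1/3}k$ divides this coefficient by $(3t)^{1/3}$, and the reconstruction formula \eqref{u=F(tildeM)} applies $-2\ii\partial_x$ which introduces another $(3t)^{-1/3}$, producing the prefactor $(3t)^{-2/3}$ in the statement.

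Finally, inserting these into the reconstruction formula (and using that $u_{sol}$ is exponentially small) gives
\begin{equation*}
u(x,t)=\frac{1}{(3t)^{2/3}}\Bigl(y^2\bigl(\tfrac{x}{(3t)^{1/3}}\bigr)+y'\bigl(\tfrac{x}{(3t)^{1/3}}\bigr)\Bigr)+O(t^{-2/3}).
\end{equation*}
The main obstacle I anticipate is the uniform-in-$s$ control of the small-norm estimate: because $s$ ranges over $[-C,C]$ and the contours $\Sigma_{Pj}^{(5)}$ must be chosen so that $\operatorname{Re}(\ii(\tfrac{4}{3}k^3+sk))$ has the right sign on every ray independently of $s$, one has to verify that the steepest-descent geometry and the attendant Gaussian decay of the jumps are truly uniform for $s$ in this compact set, and in particular that the pole $k_p$ stays bounded away from the deformed contours for all such $s$. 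Once that uniformity is in place, the Beals–Coifman bound combined with Lemma \ref{Lemma-4.3}-style Hölder estimates delivers the $O(t^{-2/3})$ error cleanly.
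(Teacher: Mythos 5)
Your proposal follows essentially the same route as the paper: the chain of deformations $\tilde M\to M_P^{(1)}\to\cdots\to M_P^{(6)}$, absorption of the exponentially small $u_{sol}$ contribution, the $O(t^{-1/3})$ estimate for replacing $r(k)$ by $r(0)=1$ on the lens contours, identification with the Painlev\'e II model problem with Stokes data $(r(0),-r(0),0)=(1,-1,0)$, and reading off the $(3t)^{-2/3}$ prefactor from the rescaling together with the $\partial_x$ in the reconstruction formula. Your added remarks on the pole $k_p$ escaping to infinity under the rescaling and on uniformity in $s\in[-C,C]$ are sensible technical supplements but do not change the argument.
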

	\begin{proof}
		The Theorem \ref{Painleve-II model} in the appendix indicates that
		\begin{equation}
			M^{(6)}(x,t,k)=I+\frac{\ii}{2k}
			\begin{pmatrix}
				\int_{s_0}^{s}y^2(\xi)\dd\xi&-y(s)\\
				y(s)&-\int_{s_0}^{s}y^2(\xi)\dd\xi
			\end{pmatrix}
			+O(k^{-2}),
		\end{equation}
		where $y(s)=y(s;p,q,r)$ is the solution of Painlev\'e II equation with Stokes constants $(p,q,r)$.   \par
		\par
		Then the reconstruction formula in (\ref{u=F(tildeM)}) shows that
		\begin{equation}
			\begin{aligned}
				u(x,t)&=-2\ii\frac{\partial}{\partial x}\left( \lim_{k\to\infty}(k(\tilde{M}_{11}(x,t,k)-1))+\lim_{k\to\infty}k\tilde{M}_{21}(x,t,k)\right) \\
				&=-2\ii\frac{\partial}{\partial x}\left( \lim_{k\to\infty}(k(M^{(1)}_{P,11}(x,t,k)-1))+\lim_{k\to\infty}kM^{(1)}_{P,21}(x,t,k)\right)+O(\ee^{-ct})\\
				&=-2\ii\frac{1}{(3t)^{1/3}}\frac{\partial}{\partial x}\left( \lim_{k\to\infty}(k(M^{(2)}_{P,11}(x,t,k)-1))+\lim_{k\to\infty}kM^{(2)}_{P,21}(x,t,k)\right)+O(\ee^{-ct})\\
				&=-2\ii\frac{1}{(3t)^{1/3}}\frac{\partial}{\partial x}\left( \lim_{k\to\infty}(k(M^{(4)}_{P,11}(x,t,k)-1))+\lim_{k\to\infty}kM^{(4)}_{P,21}(x,t,k)\right)+O(\ee^{-ct})+O(t^{-2/3})\\
				&=-2\ii\frac{1}{(3t)^{1/3}}\frac{\partial}{\partial x}\left( \lim_{k\to\infty}(k(M^{(6)}_{P,11}(x,t,k)-1))+\lim_{k\to\infty}kM^{(6)}_{P,21}(x,t,k)\right)+O(t^{-2/3})\\
				&=\frac{1}{(3t)^{1/3}}\frac{\partial}{\partial x}\left(\int_{s_0}^{s}y^2(\xi)\dd\xi +y(s)\right)+O(t^{-2/3})\\
				&=\frac{1}{(3t)^{2/3}}\left( y^{2}\left( \frac{x}{(3t)^{1/3}}\right) +y'(\frac{x}{(3t)^{1/3}})\right) +O(t^{-2/3}).
			\end{aligned}
		\end{equation}
	\end{proof}

	In what follow, consider the case of $k_0\in\ii\R$ and $|\tau|\leq C$ for constant $C>0$. Since $k_0\in\ii\R$, the deformation of $M^{(1)}_P$ will save many steps. Firstly, scale $M^{(7)}_P(x,t,k)=M^{(1)}_P(x,t,(3t)^{-1/3}k)$, where the RH problem for $M^{(7)}_P(x,t,k)$ has jump matrix of the form
	\begin{equation}
		V^{(7)}_P(k)=(b^{(7)}_{P-}(k))^{-1}b^{(7)}_{P+}(k), \quad k\in\Sigma^{(8)}_P:=\R.
	\end{equation}
	Similarly, we have
	\begin{equation}
		b^{(7)}_{P+}(k)=\ee^{-\ii (\frac{4}{3}k^3+sk)\hat{\sigma}_3}
		\begin{pmatrix}
			1&0\\
			r(0)&1
		\end{pmatrix}
		+\ee^{-\ii (\frac{4}{3}k^3+sk)\hat{\sigma}_3}
		\begin{pmatrix}
			1&0\\
			O(k/(3t)^{1/3})&1
		\end{pmatrix},
	\end{equation}
	and
	\begin{equation}
		\left\|\frac{k}{(3t)^{1/3}}\ee^{2\ii (\frac{4}{3}k^3+sk)} \right\|_{\L^1( \Sigma^{(2)}_{P}) \cap\L^\infty(\Sigma^{(2)}_{P})}\leq ct^{-1/3},
	\end{equation}
	
	Then by Beals-Coifman theory \cite{Beals-Coifman-1984}, we only need to consider the following RH problem:
	\begin{RHP}
		Find a $2\times2$ matrix-valued function $M^{(8)}_P(x,t,k)$ with the following properties:\\
		(1) $M^{(8)}_P(x,t,\cdot)$ is sectionally analytic in $\C\backslash\R$;\\
		(2) $M^{(8)}_P(x,t,k)=\sigma_1M^{(8)}_P(x,t,-k)\sigma_1$;\\
		(3) $M^{(8)}_P(x,t,k)\to I,\quad k\to\infty,k\notin\R$;\\
		(4) $M^{(8)}_{P+}(x,t,k)=M^{(8)}_{P-}(x,t,k)V^{(8)}_P(x,t,k)$ for $k\in \R$,
		where the jump matrix is
		\begin{equation}
			V^{(8)}_P(x,t,k)=(b^{(8)}_{P-}(k))^{-1}b^{(8)}_{P+}(k),
		\end{equation}
		with
		\begin{equation}
			b^{(8)}_{P+}(k):=\ee^{-\ii (\frac{4}{3}k^3+sk)\hat{\sigma}_3}
			\begin{pmatrix}
				1&0\\
				r(0)&1
			\end{pmatrix},\quad
			b^{(8)}_{P-}(k):=\ee^{-\ii (\frac{4}{3}k^3+sk)\hat{\sigma}_3}
			\begin{pmatrix}
				1&r^*(0)\\
				0&1
			\end{pmatrix}.
		\end{equation}
	\end{RHP}
	
	Following the similar procedures as the previous part, the long-time asymptotics of the solution to the initial problem (\ref{InitialProblem}) in the self-similar region $|\tau|<C$ for some $C>0$ can be given in the following theorem.
	
	\begin{theorem}
		In the region $k_0\in\ii\R$ and $|\tau|<C$ for some $C>0$, the asymptotic solution to the initial problem (\ref{InitialProblem}) can be described by the Painlev\'e II equation in the form
		\begin{equation}
			u(x,t)=\frac{1}{(3t)^{2/3}}\left( y^{2}\left( \frac{x}{(3t)^{1/3}}\right) +y'(\frac{x}{(3t)^{1/3}})\right) +O(t^{-2/3}).
		\end{equation}
	\end{theorem}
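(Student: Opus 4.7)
The plan is to mirror the deformation already executed in the case $k_0\in\R$, exploiting the fact that $k_0\in\ii\R$ eliminates the real stationary phase points and therefore removes the need for the outer model problem localized at $\pm k_0$. Starting from the RH problem for $M^{(8)}_P(x,t,k)$ with jump $(b^{(8)}_{P-})^{-1}b^{(8)}_{P+}$ on $\R$, I would open lenses from the origin along four rays at angles $\pm\pi/3$ and $\pm 2\pi/3$, multiplying $M^{(8)}_P$ on the right by $(b^{(8)}_{P+})^{-1}$ in the wedges where $\operatorname{Re}(\ii(\tfrac{4}{3}k^3+sk))<0$ and by $(b^{(8)}_{P-})^{-1}$ where it is positive. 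Because $s=(3t)^{-1/3}x$ is uniformly bounded on $|\tau|<C$ and the cubic dominates at infinity, the off-diagonal entries on the four opened rays decay exponentially away from any fixed neighbourhood of the origin.

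The resulting problem has jumps only on the four rays, and the jump matrices are precisely the lower-triangular factor with $(2,1)$-entry $r(0)$ on the upper rays and the upper-triangular factor with $(1,2)$-entry $-\overline{r(0)}$ on the lower rays, conjugated by $\ee^{-\ii(\tfrac{4}{3}k^3+sk)\hat{\sigma}_3}$. Since $r(0)=1$ by \eqref{r(k)}, this exactly matches the Painlev\'e II model RH problem of Theorem~\ref{Painleve-II model} in the appendix with Stokes data $(p,q,r)=(1,-1,0)$, whose solution is the Ablowitz--Segur function $y(s)$ and whose large-$k$ expansion has $1/k$ coefficient $\tfrac{\ii}{2}\bigl(\int_{s_0}^s y^2\,\dd\xi\cdot\sigma_3\,\text{off-diag}\,-y(s)\sigma_2\bigr)$ in the same form as in the $k_0\in\R$ case. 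Unwinding the scaling $k\mapsto (3t)^{-1/3}k$ and the initial $M^{(sol)}$ subtraction (which contributes only $O(\ee^{-ct})$ in this region because the soliton factor is evaluated far from the saddle for $k_0\in\ii\R$ with small $|\tau|$), the reconstruction formula \eqref{u=F(tildeM)} then yields the asserted expansion.

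The main obstacle I expect is the uniform error control when $r(k)$ is replaced by the constant $r(0)$ on the opened contour; this is the analogue, in the present regime, of the H\"older estimate in Lemma~\ref{Lemma-4.3}, and must be established uniformly in $k_0\in\ii\R$ with $|\tau|<C$ so that the perturbation contributes an $O(t^{-1/3})$ correction in the relevant $\L^1\cap\L^\infty$ norms. The explicit rational form $r(k)=\ii U_0/(\ii U_0-2k)$ makes this essentially immediate since the only pole $k_p$ remains well separated from the contour, but one still needs to verify that the perturbation of the contour by the lens opening does not pick up residue contributions, and that $M^{(sol)}(x,t,k)$ is bounded and tends to $I$ fast enough on the deformed contour so the transition from $\tilde{M}$ to $M^{(1)}_P$ does not contaminate the leading order. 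With these routine checks in place, the Beals--Coifman representation and Painlev\'e II model then give the stated $O(t^{-2/3})$ asymptotic exactly as in the previous theorem.
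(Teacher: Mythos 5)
Your proposal is correct and follows essentially the same route as the paper, which in fact omits this proof entirely and simply says to repeat the procedure of the preceding ($k_0\in\R$) case: scale by $(3t)^{-1/3}$, replace $r(k)$ by $r(0)=1$ with an $O(t^{-1/3})$ error in $\L^1\cap\L^\infty$, open lenses onto the six rays, identify the result with the Painlev\'e II model problem with Stokes data $(p,q,r)=(1,-1,0)$, and unwind via the reconstruction formula. Your supplementary remarks (the H\"older-type control of $r(k)-r(0)$ away from $k_p$, and the exponential smallness of the $M^{(sol)}$ contribution since the soliton sits at $x\sim U_0^2t$ while here $x=O(t^{1/3})$) are exactly the checks the paper's "following the similar procedures" implicitly relies on.
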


	\subsection{\bf Case II. The parameter $U_0<0$}
	It can be seen that when $U_0<0$, the solution $\tilde{M}(x,t,k)$ of the RH problem \ref{RHP:tildeM}  has no singularities on $\C\backslash\R$. That is to say, the soliton is absent in this case, so one can only discuss the dispersive wave region and self-similar region.
	
	\subsubsection{Dispersive wave region}
	In this region, one just needs to replace $T(k)$ in $(\ref{T(k)})$ with
	\begin{equation}
		T(k)=\exp\left(\frac{1}{2\pi\ii}\int_{-k_0}^{k_0}\frac{\log(1-|r(s)|^2)}{s-k}\dd s \right),
	\end{equation}
	and replace the jump matrix on $\Gamma_{\pm}$ with the identity matrix. Then repeating the steps in Section \ref{U0>0:DispersiveReigon}, the similar result can also be obtained:
	\begin{theorem}
		In the dispersive wave region, i.e., $x/t\leq -C$ for some $C>0$, the asymptotic solution to the initial problem (\ref{InitialProblem}) can be expressed by
		\begin{equation}\label{asymptotic-solution-DW-u0less0}
			u(x,t)=\sqrt{\frac{4\nu(k_0)k_0}{3t}}\sin(16tk_0^3-\nu(k_0)\log(192tk_0^3)+\phi(k_0))+O(t^{-\alpha}),
		\end{equation}
		where $k_0=\sqrt{-\dfrac{x}{12t}}$, $r(k_0)=\dfrac{\ii U_0}{\ii U_0-2k_0}$ and
		\begin{equation}
			\begin{aligned}
				\nu(k_0)&=\frac{1}{\pi}\log(1-|r(k_0)|^2),\\
				\phi(k_0)&=\frac{\pi}{4}-\arg(r(k_0))+\arg(\Gamma(\ii\nu(k_0)))-2\ii\chi(k_0). \\
			\end{aligned}
		\end{equation}
	\end{theorem}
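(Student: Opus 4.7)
The plan is to mimic the Deift--Zhou analysis carried out in Section \ref{U0>0:DispersiveReigon} for $U_0>0$, exploiting the crucial simplification that for $U_0<0$ the Schr\"odinger operator $\mathcal{L}=\partial_x^2-u$ carries no bound states, so the function $\tilde{M}(x,t,k)$ has no poles in $\C\setminus\R$. Consequently, the whole bookkeeping associated to the disks $U_\pm=D(\pm k_p,\epsilon)$, the matrix $D(k)$ built from the Blaschke factor, and the residue conditions (\ref{res_M_at_kp})--(\ref{res_M_at_-kp}) drops out. All that survives from the $U_0>0$ analysis is the jump on $\R$, and the steepest descent deformation is driven entirely by the two stationary phase points $\pm k_0=\pm\sqrt{-x/(12t)}$.

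First I would introduce the modified scalar function
\begin{equation*}
T(k)=\exp\!\left(\frac{1}{2\pi\ii}\int_{-k_0}^{k_0}\frac{\log(1-|r(s)|^2)}{s-k}\,\dd s\right),
\end{equation*}
which still satisfies the jump relation $T_+(k)=T_-(k)(1-|r(k)|^2)$ on $(-k_0,k_0)$ and is bounded with $T(k)\to 1$ as $k\to\infty$. Performing the conjugation $M^{(1)}(x,t,k)=\tilde{M}(x,t,k)T(k)^{-\sigma_3}$ turns the jump on $\R$ into the same upper/lower triangular factorization $(b_-^{(1)})^{-1}b_+^{(1)}$ for $|k|>k_0$ and $(B_-^{(1)})^{-1}B_+^{(1)}$ for $|k|<k_0$ as in RH problem \ref{RH:usingTk}, with the factor $\frac{k+k_p}{k-k_p}$ simply replaced by $1$. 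Opening a lens along these factorizations gives a new RH problem $M^{(2)}$ whose jumps on the lens contours decay exponentially away from $\pm k_0$.

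Next I would localize the analysis near each stationary point. Following exactly the coordinate rescaling $\zeta=\sqrt{48k_0}(k-k_0)$ and the H\"older-type estimate of Lemma \ref{Lemma-4.3} (whose proof goes through unchanged for $U_0<0$, since $r(k)$ is still a simple M\"obius function and $\tilde{T}(k)$ here is just $\exp(\chi(k))$ without the Blaschke prefactor), I would reduce the local model at $k_0$ to the standard parabolic cylinder parametrix. This produces the same explicit local solution $M^{(k_0)}(x,t,k)=I+\frac{\ii}{t^{1/2}\sqrt{48k_0}(k-k_0)}\bigl(\begin{smallmatrix}0&-\beta\\ \bar\beta&0\end{smallmatrix}\bigr)+O(t^{-\alpha})$, while the symmetry $k\mapsto -k$ yields the model at $-k_0$. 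The contribution $\beta$ is identical to the $U_0>0$ case except that the argument of $\tilde T(k_0)^2$ now lacks the $4\arctan(U_0/(2k_0))$ term coming from $\frac{k_0+k_p}{k_0-k_p}$; this is precisely why the phase $\phi(k_0)$ in the statement omits that summand.

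Finally, Beals--Coifman theory applied to the error RH problem $M^s=M^{(2)}(M^{(\pm k_0)})^{-1}$ gives
\begin{equation*}
M^s(k)=I+\frac{1}{t^{1/2}}\!\left(\frac{M^{(k_0)}_1}{k-k_0}+\frac{M^{(-k_0)}_1}{k+k_0}\right)+O(t^{-\alpha}),\qquad \tfrac12<\alpha<1,
\end{equation*}
because the jumps off the small disks around $\pm k_0$ are exponentially close to the identity and those on the disk boundaries are $O(t^{-1/2})$. Inserting this into the reconstruction formula (\ref{u=F(tildeM)}), differentiating once in $x$, and using $\partial_x(16tk_0^3)=-2k_0$ to turn the prefactor into an oscillatory amplitude recovers (\ref{asymptotic-solution-DW-u0less0}). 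The main obstacle, as in the $U_0>0$ case, is the H\"older estimate near the stationary points that justifies replacing $r(k(\zeta))(\zeta+2k_0\sqrt{48k_0})^{2\ii\nu}\tilde T(k(\zeta))$ by the constant $r_0$ in the parabolic cylinder parametrix; but because $r(k)$ for $U_0<0$ is analytic in a full neighborhood of $\R$ and $|r(k)|<1$ uniformly on $\R$, this estimate is actually easier than in the $U_0>0$ case where one had to avoid the pole $k_p\in \ii\R_+$.
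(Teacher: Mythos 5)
Your proposal follows essentially the same route as the paper, which itself proves this theorem by observing that one only needs to drop the Blaschke factor $\frac{k+k_p}{k-k_p}$ from $T(k)$, set the jumps on $\Gamma_\pm$ to the identity, and repeat the $U_0>0$ dispersive-region analysis verbatim; you correctly identify that the disappearance of the $4\arctan\left(\frac{U_0}{2k_0}\right)$ term in $\phi(k_0)$ is exactly the trace of the removed Blaschke factor in $\tilde T(k_0)^2$. The only quibble is your remark that $|r(k)|<1$ uniformly on $\R$ (it equals $1$ at $k=0$), but this is harmless since $k_0$ is bounded away from the origin in the region $x/t\leq -C$.
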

	\par
	It is remarked from the asymptotic expression (\ref{asymptotic-solution-DW-u0less0}) that the soliton only affects the phase of the solution in the long-time sense.
	
	\subsubsection{Self-similar region}
	
	As can be seen from the situation of $U_0>0$, soliton has no influence on the self-similar region. So the same result for $U_0<0$ can also be derived:
	\begin{theorem}
		In the self-similar region, i.e., $|\tau|<C$ for some $C>0$, the asymptotic solution to the initial problem (\ref{InitialProblem}) can be expressed by
		\begin{equation}
			u(x,t)=\frac{1}{(3t)^{2/3}}\left( y^{2}\left( \frac{x}{(3t)^{1/3}}\right) +y'(\frac{x}{(3t)^{1/3}})\right) +O(t^{-2/3}).
		\end{equation}
	\end{theorem}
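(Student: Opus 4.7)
The plan is to mirror the argument carried out for $U_0>0$ in the self-similar region, exploiting the simplification afforded by the absence of the pole $k_p$ in the upper half plane. Since for $U_0<0$ the Schr\"odinger operator supports no discrete spectrum, the RH problem \ref{RHP:tildeM} has no residue conditions, so I can bypass the first transformation $\tilde{M}\mapsto M^{(1)}_P=\tilde{M}(M^{(sol)})^{-1}$ entirely and simply set $M^{(1)}_P=\tilde{M}$. As in the $U_0>0$ analysis, the two sub-cases $k_0\in\R$ (that is, $x<0$) and $k_0\in\ii\R$ (that is, $x>0$) must be treated separately, but both reduce, after rescaling $k\mapsto(3t)^{-1/3}k$, to the same model problem.

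In the first step I would rescale $M^{(2)}_P(k):=M^{(1)}_P((3t)^{-1/3}k)$, noting that $t\theta((3t)^{-1/3}k)=\tfrac{4}{3}k^3+sk$ with $s=(3t)^{-1/3}x$ and $|s|\lesssim 1$ because $|\tau|<C$. Using the Taylor expansion $r(k)=r(0)+\int_0^k r'(\zeta)\,\dd\zeta$, which is valid uniformly on compact sets since $r$ has its unique pole on the negative imaginary axis (as $U_0<0$), the factorization $V^{(2)}_P=(b^{(2)}_{P-})^{-1}b^{(2)}_{P+}$ decomposes into a leading part with frozen reflection coefficient $r(0)$ and a remainder of size $O(k/(3t)^{1/3})$. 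Opening lenses away from the real axis along standard Painlev\'e-II contours, exactly as in the $U_0>0$ case, and bounding the error contributions by
\begin{equation*}
\left\|\frac{k}{(3t)^{1/3}}\ee^{2\ii(\frac{4}{3}k^3+sk)}\right\|_{\L^1\cap\L^\infty}\leq ct^{-1/3},
\end{equation*}
I arrive at a truncated RH problem whose jump matrices carry only the constants $r(0)$ and $-r^{*}(0)$ multiplied by $\ee^{-\ii(\frac{4}{3}k^3+sk)\hat\sigma_3}$.

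A final rotation of the lens structure, identical to the $M^{(5)}_P\mapsto M^{(6)}_P$ step in the $U_0>0$ analysis, matches this problem to Theorem \ref{Painleve-II model} in the appendix with Stokes data $(p,q,r)=(r(0),-r(0),0)$. Reading off the large-$k$ expansion
\begin{equation*}
M^{(6)}_P(x,t,k)=I+\frac{\ii}{2k}\begin{pmatrix}\int_{s_0}^{s}y^2(\xi)\dd\xi & -y(s)\\ y(s) & -\int_{s_0}^{s}y^2(\xi)\dd\xi\end{pmatrix}+O(k^{-2}),
\end{equation*}
and inserting this into the reconstruction formula (\ref{u=F(tildeM)}), together with $\partial_x=(3t)^{-1/3}\partial_s$, yields the claimed Miura-type combination $y^2(s)+y'(s)$ up to $O(t^{-2/3})$.

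The main obstacle is not the algebra, which is parallel to the $U_0>0$ case, but verifying that the error estimates are uniform across both sub-cases $k_0\in\R$ and $k_0\in\ii\R$ and across the transition $k_0\to 0$. In particular one must check that the pole $k_p=\ii U_0/2$, which now sits on the negative imaginary axis rather than being absent from the upper half plane, does not interfere with the H\"older estimates on $r(k)-r(0)$ used when opening the lenses: this is automatic since $|k_p|=|U_0|/2$ is bounded away from any compact neighborhood of the origin in the rescaled variable once $|\tau|$ is small, but the dependence on $k_0$ must be tracked carefully. Once uniformity is established, the Beals--Coifman singular integral argument and the model parametrix matching are formally identical to those already carried out for $U_0>0$.
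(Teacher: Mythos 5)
Your proposal is correct and follows essentially the same route as the paper: the paper proves this theorem for $U_0<0$ simply by remarking that the soliton plays no role in the self-similar region and invoking the $U_0>0$ analysis verbatim, which is exactly the argument you reconstruct (rescaling by $(3t)^{-1/3}$, freezing $r$ at $r(0)=1$, opening lenses, and matching to the Painlev\'e II parametrix with Stokes data $(1,-1,0)$). Your additional observations — that the conjugation by $M^{(sol)}$ can be skipped outright since the RH problem has no residue conditions for $U_0<0$, and that the pole $k_p=\ii U_0/2$ now sits in the lower half plane at distance $|U_0|/2$ from the origin so the H\"older estimates on $r(k)-r(0)$ are unaffected — are exactly the points the paper leaves implicit.
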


	\section{The general delta function initial profile with multi-spikes}\label{section-general}
	
	In this section, the initial data in (\ref{InitialProblem}) of KdV equation with one delta function is extended into the general delta function with multi-spikes \cite{Cohen-Kappeler-1987}, as given in (\ref{L-delta-intial}). Further assume that $x_1<x_2<\cdots<x_L$, which denotes that the spikes of the general delta function locate at different points.
	\par
	Substituting (\ref{L-delta-intial}) into the first equation of Volterra integral equations (\ref{volterra-integral-equations}), it is obtained that
	\begin{equation*}
		N_l(x,k)=I-\sum_{n=1}^L\frac{\ii U_n}{2k}\int_{-\infty}^x\delta_n(s)\ee^{-\ii k(x-s)\hat{\sigma}_3}(QN_l(s,k))\dd s.
	\end{equation*}
	\par
	When $x< x_1$, it follows that
	\begin{equation}
		\label{N_l-L-delta-1}
		N_l(x,k)=I.
	\end{equation}
	\par
	When $x_m\leq x\leq x_{m+1},\ m=1,2,\cdots,L$, it follows that
	\begin{equation}
		\label{N_l-L-delta-2-N}
		\begin{aligned}
			N_l(x,k)&=I-\left( \sum_{n=1}^m+\sum_{n=m+1}^L\right) \frac{\ii U_n}{2k}\int_{-\infty}^x\delta_n(s)\ee^{-\ii k(x-s)\hat{\sigma}_3}(QN_l(s,k))\dd s\\
			&=I-\sum_{n=1}^m\frac{\ii U_n}{2k}\int_{-\infty}^x\delta_n(s)\ee^{-\ii k(x-s)\hat{\sigma}_3}(QN_l(s,k))\dd s\\
			&=I-\sum_{n=1}^m\frac{\ii U_n}{2k}\ee^{-\ii k(x-x_n)\hat{\sigma}_3}(QN_l(x_n,k)).
		\end{aligned}
	\end{equation}
	\par
	As long as one derives $N_l(x_n,k)$, then $N_l(x,k)$ can be obtained. Thus substituting $x=x_{m+1}$ for $m=1,2,\cdots,L-1$ into equation (\ref{N_l-L-delta-2-N}) yields
	\begin{equation*}
		N_l(x_{m+1},k)=I-\sum_{n=1}^m\frac{\ii U_n}{2k}\ee^{-\ii k(x_{m+1}-x_n)\hat{\sigma}_3}(QN_l(x_n,k)),
	\end{equation*}
	which can be rewritten as
	\begin{equation*}
		\ee^{\ii kx_{m}\hat{\sigma}_3}N_l(x_{m},k)=I-\sum_{n=1}^{m-1}\frac{\ii U_n}{2k}(\ee^{\ii kx_n\hat{\sigma}_3}Q)(\ee^{\ii kx_n\hat{\sigma}_3}N_l(x_n,k)),\ m=2,\cdots,L.
	\end{equation*}
	From (\ref{N_l-L-delta-1}), it follows that $N_l(x_1,k)=I$, which implies that $N_l(x_m,k)$ can be obtained by $N_l(x_1,k),\cdots,N_l(x_{m-1},k)$.
	\par	
	Similarly, it can be derived that $N_r(x,k)=I$ for $x>x_L$. Denote $S_L(k)$ as the scattering matrix for the initial condition $u(x,0)=-\sum_{n=1}^LU_n\delta_n(x)$, then the equation (\ref{scattering-matrax}) implies that
	\begin{equation}
		\label{S_L-1}
		\begin{aligned}
			S_L(k)&=\ee^{\ii kx\hat{\sigma}_3}(N_r^{-1}(x,k)N_l(x,k))\\
			&=\ee^{\ii kx\hat{\sigma}_3}N_l(x,k)\\
			&=I-\sum_{n=1}^L\frac{\ii U_n}{2k}(\ee^{\ii kx_n\hat{\sigma}_3}Q)(\ee^{\ii kx_n\hat{\sigma}_3}N_l(x_n,k))
		\end{aligned}
	\end{equation}
	for $x>x_L$. It can be proved that (\ref{S_L-1}) still holds for $x\leq x_L$.
	\par	
	In fact, be means of mathematical induction one can obtain that
	\begin{equation*}
		\ee^{\ii kx_{m+1}\hat{\sigma}_3}N_l(x_{m+1},k)=\left(I-\frac{\ii U_m}{2k}\ee^{\ii kx_m\hat{\sigma}_3}Q \right) \left(I-\frac{\ii U_{m-1}}{2k}\ee^{\ii kx_{m-1}\hat{\sigma}_3}Q \right) \cdots\left(I-\frac{\ii U_1}{2k}\ee^{\ii kx_1\hat{\sigma}_3}Q \right) ,
	\end{equation*}
	and
	\begin{equation}
		\label{S_L}
		S_L(k)=\left(I-\frac{\ii U_L}{2k}\ee^{\ii kx_L\hat{\sigma}_3}Q \right) \left(I-\frac{\ii U_{L-1}}{2k}\ee^{\ii kx_{L-1}\hat{\sigma}_3}Q \right)\cdots \left(I-\frac{\ii U_1}{2k}\ee^{\ii kx_1\hat{\sigma}_3}Q \right) .
	\end{equation}
	The recursive formula for reflection coefficient $r_L(k)$ can also be obtained
	\begin{equation}\label{R_L-L}
		r_L(k)=\frac{(2k+\ii U_L)r_{L-1}(k)-\ii U_L\ee^{-2\ii k x_L}}{\ii U_L\ee^{2\ii k x_L}r_{L-1}(k)+2k-\ii U_L},\quad L=2,3,\cdots,
	\end{equation}
	in which the first term $r_1(k)$ will be given below.

	\subsection{Potential of one delta function }
	
	For $L=1$ in initial data (\ref{L-delta-intial}), the associated scattering matrix and reflection coefficient are
	\begin{equation*}
		S_1(k)=I-\frac{\ii U_1}{2k}\ee^{\ii kx_1\hat{\sigma}_3}Q,
	\end{equation*}
	\begin{equation*}
		r_1(k)=\frac{\ii U_1}{\ii U_1-2k}\ee^{-2\ii k x_1},
	\end{equation*}
	which can recover the reflection coefficient in (\ref{r-k-one-delta}) for $U_1=U_0$ and $x_1=0$.
	In order to use the RH problem to analyze the long-time asymptotic behavior of the solution to the KdV equation in the initial data (\ref{L-delta-intial}) for $L=1$, it can be found through simple calculations that only the phase function $\theta(x,t,k)$ in (\ref{theta}) needs to be replaced with
	\begin{equation*}
		\theta(x,t,k)=k\frac{x-x_1}{t}+4k^3
	\end{equation*}
	in the RH problem \ref{RHP-3.1}. In this case, the analysis in Section \ref{section-proofs} remains unchanged, and it is only necessary to shift the corresponding long-time asymptotic results to the right by constant $x_1$.
	
	\subsection{Potential of double delta functions}
	
	For $L=2$ in initial data (\ref{L-delta-intial}), the associated scattering data are
	\begin{equation*}
		S_2(k)=I-\frac{\ii U_1}{2k}\ee^{\ii kx_1\hat{\sigma}_3}Q-\frac{\ii U_2}{2k}\ee^{\ii kx_2\hat{\sigma}_3}Q+\frac{(\ii U_1)(\ii U_2)}{(2k)^2}(\ee^{\ii kx_2\hat{\sigma}_3}Q)(\ee^{\ii kx_1\hat{\sigma}_3}Q),
	\end{equation*}
	\begin{equation*}
		s_{11}(k)=1-\frac{\ii U_1}{2k}-\frac{\ii U_2}{2k}+\frac{(\ii U_1)(\ii U_2)}{(2k)^2}(1-\ee^{2\ii k(x_2-x_1)}),
	\end{equation*}
	\begin{equation*}
		s_{21}(k)=-\frac{\ii U_1}{2k}-\frac{\ii U_2}{2k}+\frac{(\ii U_1)(\ii U_2)}{(2k)^2}(\ee^{-2\ii kx_2}-\ee^{-2\ii kx_1}),
	\end{equation*}
	and
	\begin{equation}\label{R_L-L=2}
		r_2(k)=\frac{-2\ii(U_1+U_2)k-U_1U_2(\ee^{-2\ii kx_2}-\ee^{-2\ii kx_1})}{4k^2-2\ii(U_1+U_2)k-U_1U_2(1-\ee^{2\ii k(x_2-x_1)})}.
	\end{equation}
	\par	
	The zeros of spectral function $s_{11}(k)$ provide discrete eigenvalues and lead to the emergence of solitons in KdV equation. Consider the equation $s_{11}(k)=0$, which is equivalent to $s_{11}(\ii z)=0$ for $k=\ii z$. Thus
	it follows that
	\begin{equation}
		\label{L=2-zero-equation}
		(2z-U_1)(2z-U_2)=U_1U_2\ee^{2(x_1-x_2)z}.
	\end{equation}
	\par
	Define $\sharp_L$ to be the number of discrete eigenvalues for initial data (\ref{L-delta-intial}). In the case of $L=2$, analyzing the positive roots of equation (\ref{L=2-zero-equation}) yields four cases:
	\begin{itemize}
		\item [(a)] If $U_1>0,\ U_2>0$ and $x_2-x_1>1/U_1+1/U_2$, then $\sharp_2=2$;
		\item [(b)] If $U_1>0,\ U_2>0$ and $x_2-x_1<1/U_1+1/U_2$, then $\sharp_2=1$;
		\item [(c)] If $U_1U_2<0$ and $x_2-x_1>1/U_1+1/U_2$, then $\sharp_2=1$;
		\item [(d)] If $U_1, U_2, x_1$ and $x_2$ don't satisfy the conditions in (a)-(c), then $\sharp_2=0$.
	\end{itemize}
	\par	
	Assume that $s_{11}(k)$ vanishes at two different points $k_j=\ii z_j\in\R_{+}$ and $z_1<z_2$. The long-time asymptotic of the KdV equation for the initial value $u(x,0)=-U_1\delta_1(x)-U_2\delta_2(x)$ in soliton region can be given as follows
	\begin{equation*}
		u(x,t)\sim-2\sum_{j=1}^2z_j^2\operatorname{sech}^2\left(\frac{1}{2}(z_jx-z_j^3t+l_j) \right) ,
	\end{equation*}
	where
	\begin{equation*}
		l_j=\frac{1}{2}\log\left(\frac{\gamma_j}{2z_j}\prod_{m=j+1}^2\left(\frac{z_m-z_j}{z_m+z_j} \right)  \right) ,\quad \gamma_j=\frac{s_{21}(k_j)}{s_{11}'(k_j)}.
	\end{equation*}
	\par
	\begin{figure}
		\centering
		\includegraphics[scale=0.4]{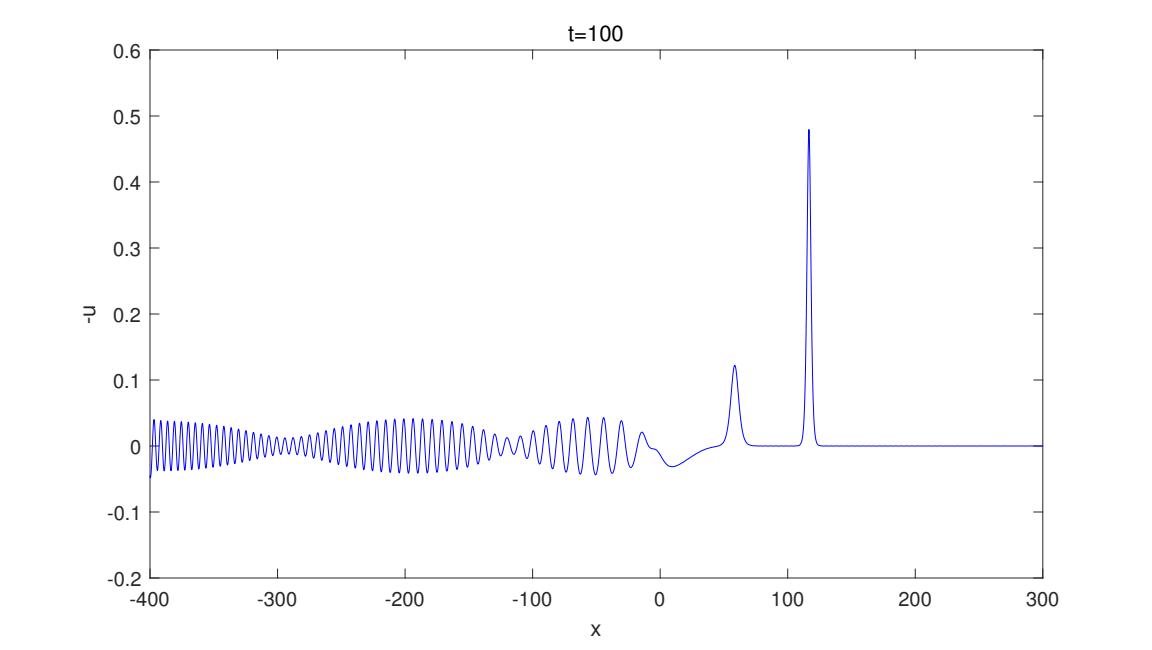}
		\caption{Numerical simulation of $-u(x,t)$ at $t=100$ for $L=2$, $U_1=U_2=0.5$, $x_1=20$ and $x_2=40$ in (\ref{L-delta-intial}).}
		\label{L2,sig=20,h=0.5,T=100}
	\end{figure}
	\par
	In particular, for $L=2$, take $U_1=U_2=0.5$, $x_1=20$ and $x_2=40$, which satisfies case (a) above, so two solitons emerge. Figure \ref{L2,sig=20,h=0.5,T=100} shows the numerical result of $-u(x,t)$ at $t=100$, where $u(x,t)$ solves the KdV equation. It is observed that for time large enough, the asymptotic solution of the KdV equation with delta function initial profile of two spikes can split into dispersive waves and two solitons.

	\subsection{Potential of $L$ delta functions}

	\begin{figure}
		\centering
		\begin{tikzpicture}
			\draw[black,very thick,-latex](-0.5,0)--(8,0);
			\draw[black,very thick,-latex](0,-1)--(0,4);
			
			\draw[domain=0:8]plot({\x},{(2*\x+3)/(\x+1)});
			\draw[domain=1.5:8]plot({\x},{(2*(\x-1.5))/\x});
			\draw[domain=3:8]plot({\x},{(2*(\x-3))/\x});
			\draw[domain=5:8]plot({\x},{(2*(\x-5))/\x});
			
			\draw[dashed](0,2)--(8,2);
			
			\draw[dashed](1,0)--(1,4);
			\node[]at(1,4){$\sharp_L=1$};
			\draw[dashed](2.25,0)--(2.25,4);
			\node[]at(2.25,4){$\sharp_L=2$};
			\draw[dashed](4,0)--(4,4);
			\node[]at(4,4){$\sharp_L=3$};
			\draw[dashed](6.5,0)--(6.5,4);
			\node[]at(6.5,4){$\sharp_L=L$};
			
			\node[]at(1.5,-0.6){$\frac{(\sigma h)_{L-1}^L}{h}$};
			\filldraw [black] (1.5,0) circle (1.5pt);
			\node[]at(4,-0.6){$\cdots$};
			\node[]at(3,-0.6){$\frac{(\sigma h)_{L-2}^L}{h}$};
			\filldraw [black] (3,0) circle (1.5pt);
			\node[]at(5,-0.6){$\frac{(\sigma h)_{1}^L}{h}$};
			\filldraw [black] (5,0) circle (1.5pt);
			
			\node[]at(-0.3,2){$\frac{\ii  h}{2}$};
			\filldraw [black] (0,2) circle (1.5pt);
			\node[]at(-0.3,3){$\frac{\ii L h}{2}$};
			\filldraw [black] (0,3) circle (1.5pt);
			
			\node[]at(8,0.2){$\sigma$};
			\node[]at(0,4.2){$k/\ii$};
			\node[]at(-0.2,-0.2){$0$};
			\filldraw [black] (0,0) circle (1.5pt);
		\end{tikzpicture}
		\caption{Schematic diagram of the number of discrete eigenvalues.}
		\label{sigma-k-plane}
	\end{figure}
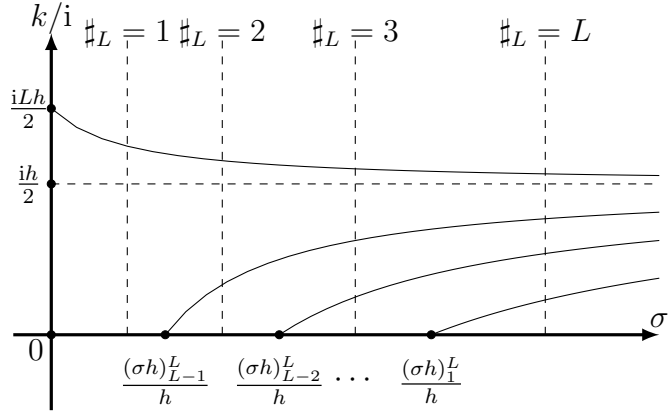

	In this part, it is proved that for the potential of $L\ (L\geq2)$ delta functions as given in (\ref{L-delta-intial}), the initial-value problem of KdV equation will produce solitons between one and $L$. For special parameters in (\ref{L-delta-intial}), the number of the solitons is classified by the following theorem.
	
	\begin{theorem}
		Let $h>0$ and $\sigma>0$. Taking $U_n=h$ and $x_n=n\sigma\ (n=1,2,\cdots,L)$ in initial data (\ref{L-delta-intial}), then the number of discrete eigenvalues, i.e., the number of solitons emerging from initial data (\ref{L-delta-intial}) is
		\begin{equation*}
			\sharp_L=L-j \quad \text{for}\quad  (\sigma h)_{j+1}^L<\sigma h\leq (\sigma h)_{j}^L,\ j=0,1,\cdots,L-1,
		\end{equation*}
		where $(\sigma h)_l^L=2+2\cos\left(\frac{l\pi}{L} \right),\ l=1,2,\cdots,L$ and $(\sigma h)^L_0=+\infty$. For $j=0$, ‘$\leq$' should be replaced by ‘$<$' (see Figure \ref{sigma-k-plane}).
	\end{theorem}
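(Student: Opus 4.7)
The plan is to exploit the equally-spaced, equal-height structure to collapse the scattering matrix (\ref{S_L}) into the $L$-th power of a single $SL(2,\C)$ transfer matrix, and then to count discrete eigenvalues by analyzing a single scalar transcendental equation in the spectral variable $z$.

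First I would factor each factor appearing in (\ref{S_L}) as $I-\tfrac{\ii h}{2k}\ee^{\ii k n\sigma\hat{\sigma}_3}Q=P_nA_0P_n^{-1}$, where $P_n:=\ee^{\ii k n\sigma\sigma_3}$ and $A_0:=I-\tfrac{\ii h}{2k}Q$. Since $P_n^{-1}P_{n-1}=\ee^{-\ii k\sigma\sigma_3}=:D$ is independent of $n$, the intermediate factors telescope and
\begin{equation*}
S_L(k)=P_L\,T^L,\qquad T:=A_0D.
\end{equation*}
At $k=\ii z$ with $z>0$, a direct computation gives $\det T=1$ and
\begin{equation*}
\tau(z):=\tfrac{1}{2}\operatorname{tr}T=\cosh(z\sigma)-\tfrac{h}{2z}\sinh(z\sigma).
\end{equation*}
Because $T\in SL(2,\C)$, the classical Chebyshev identity $T^L=\mathcal{U}_{L-1}(\tau)T-\mathcal{U}_{L-2}(\tau)I$ holds, where $\mathcal{U}_n$ is the Chebyshev polynomial of the second kind of degree $n$. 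Since the diagonal prefactor $(P_L)_{11}=\ee^{-zL\sigma}$ never vanishes, the bound-state condition $s_{11}(\ii z)=0$ reduces to
\begin{equation*}
F_L(z):=\mathcal{U}_{L-1}\!\bigl(\tau(z)\bigr)\!\left(1-\tfrac{h}{2z}\right)\ee^{z\sigma}-\mathcal{U}_{L-2}\!\bigl(\tau(z)\bigr)=0,
\end{equation*}
and the whole problem becomes the counting of positive real roots of $F_L$ as $\sigma h$ varies.

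The pivotal observation is that $\mathcal{U}_{L-1}$ has zeros exactly at $\tau=\cos(j\pi/L)$, $j=1,\ldots,L-1$, and that clearing the pole at $z=0$ gives
\begin{equation*}
\lim_{z\to 0^+}zF_L(z)=-\tfrac{h}{2}\,\mathcal{U}_{L-1}\!\bigl(1-\tfrac{h\sigma}{2}\bigr),
\end{equation*}
which vanishes precisely when $1-h\sigma/2=\cos(j\pi/L)$, equivalently when $\sigma h=2-2\cos(j\pi/L)=2+2\cos\!\bigl((L-j)\pi/L\bigr)=(\sigma h)^L_{L-j}$. Hence the theorem's thresholds are characterized as exactly those parameter values at which a positive zero of $F_L$ coalesces with the origin. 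Combining this with the elementary large-$z$ asymptotics $F_L(z)\sim\ee^{Lz\sigma}\to+\infty$ (so no root escapes to infinity) and with the $L=1,2$ calculations already in the paper (providing the base count $\sharp_L=1$ throughout $0<\sigma h\le(\sigma h)^L_{L-1}$), a continuity argument then shows that as $\sigma h$ crosses each threshold $(\sigma h)^L_l$ upward, in the order $l=L-1,L-2,\ldots,1$, exactly one new positive root of $F_L$ is born from $z=0^+$, giving $\sharp_L=L-j$ throughout $(\sigma h)^L_{j+1}<\sigma h\le(\sigma h)^L_j$.

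The main obstacle is this last step: proving that exactly one root is born at each threshold and that none is absorbed elsewhere. I would handle it via a local expansion of $zF_L(z)$ near $(z,\sigma h)=(0,(\sigma h)^L_l)$ and an implicit-function computation of the leading behavior $z\sim C\,(\sigma h-(\sigma h)^L_l)$; the slope $C$ is controlled by $\mathcal{U}_{L-2}$ evaluated at the relevant zero of $\mathcal{U}_{L-1}$, and the classical interlacing of zeros of consecutive Chebyshev polynomials of the second kind guarantees that $\mathcal{U}_{L-2}\!\bigl(\cos(j\pi/L)\bigr)\neq 0$ with a definite sign, forcing exactly one simple positive root to emerge into $(0,\infty)$ at each threshold. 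Together with the large-$z$ sign of $F_L$ and the base case, this will complete the count and close the theorem.
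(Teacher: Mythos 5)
Your computations check out: the telescoping factorization $S_L(k)=P_L T^L$ with $T=A_0\ee^{-\ii k\sigma\sigma_3}$ is correct (since $P_1^{-1}=D$, the trailing factor does close the $L$-th power), $\det T=1$, the trace formula $\tau(z)=\cosh(z\sigma)-\tfrac{h}{2z}\sinh(z\sigma)$ is right, and $\lim_{z\to0^+}zF_L(z)=-\tfrac{h}{2}\mathcal{U}_{L-1}\bigl(1-\tfrac{\sigma h}{2}\bigr)$ reproduces exactly the paper's threshold condition. The paper reaches the same quantity by a different route: it derives a three-term recurrence in $L$ for $A_L:=\lim_{k\to0}\tfrac{2k}{\ii h}(S_L(k))_{11}$, namely $A_{L+2}+(\sigma h-2)A_{L+1}+A_L=0$ with $A_1=-1$, $A_2=\sigma h-2$, whose solution is $A_L=-\mathcal{U}_{L-1}\bigl(1-\tfrac{\sigma h}{2}\bigr)$; your limit is the same object up to the normalization $\tfrac{2}{h}$. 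What your transfer-matrix decomposition buys is a closed form for $s_{11}(\ii z)$ at \emph{every} $z>0$, not only at the origin, which in principle permits a direct root count of $F_L$; the paper's recurrence controls only the behavior at $k=0$ and must anchor the count by the two limits $\sigma\to0$ (where $a_L\to1-\tfrac{\ii Lh}{2k}$, a single eigenvalue) and $\sigma\to\infty$ (where $a_L\to(1-\tfrac{\ii h}{2k})^L$, an $L$-fold zero). Both proofs then conclude by the same continuity/bifurcation reasoning, which the paper leaves entirely to a figure.

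Two genuine gaps remain in your plan. First, the base case is not supplied by the $L=1,2$ computations: those determine $\sharp_1$ and $\sharp_2$, not $\sharp_L=1$ for general $L$ and small $\sigma h$. You need an independent argument here, either the paper's $\sigma\to0$ limit or a direct count of the roots of your explicit $F_L$ when $\sigma h<2-2\cos(\pi/L)$. Second, the continuity argument requires that eigenvalues enter or leave only through $z=0$ or $z=\infty$; you treat $\infty$ and the local bifurcation at $0$, but not possible creation or annihilation of roots in the interior of $(0,\infty)$. The cleanest repair is monotonicity: the potential $-h\sum_n\delta(x-n\sigma)$ is monotone in $h$, so by the min--max principle the number of negative eigenvalues is nondecreasing in $\sigma h$ at fixed $\sigma$, which rules out loss of eigenvalues and, combined with the simplicity of the zeros of $\mathcal{U}_{L-1}$, forces exactly one eigenvalue to be born at each threshold. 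To be fair, the paper's own proof is no more complete on either point.
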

	\begin{proof}
		In order to discuss the number of discrete eigenvalues under initial condition (\ref{L-delta-intial}), fix parameter $\sigma$ in the following proof. Denote $a_L(k)=(S_L(k))_{11}$ and $b_L(k)=(S_L(k))_{21}$. From the expression of $S_L(k)$ in (\ref{S_L}), one can get the recursive formulas for $a_L(k)$ and $b_L(k)$ of the forms
		\begin{equation}
			\label{a_L+1}
			a_{L+1}(k)=\left(1-\frac{\ii h}{2k} \right)a_{L}(k)+\frac{\ii h}{2k}\ee^{2\ii k(L+1)\sigma}b_{L}(k) ,
		\end{equation}
		\begin{equation}
			\label{b_L+1}
			b_{L+1}(k)=-\frac{\ii h}{2k}\ee^{-2\ii k(L+1)\sigma}a_{L}(k)+\left(1+\frac{\ii h}{2k} \right)b_{L}(k) .
		\end{equation}
		
		It can be obtained that
		\begin{equation*}
			\lim_{\sigma\to0}S_L(k)=\left(I-\frac{\ii h}{2k}Q \right)^L=I-\frac{\ii Lh}{2k}Q
		\end{equation*}
		by using (\ref{S_L}) and that fact $Q^2=O$, which indicates that
		\begin{equation*}
			\lim_{\sigma\to0}a_L(k)=1-\frac{\ii Lh}{2k}.
		\end{equation*}
		Letting $\lim_{\sigma\to0}a_L(k)=0$ yields $k=\frac{\ii L h}{2}$.
		
		Next, consider the following limit
		\begin{equation*}
			A_L=\lim_{k\to0}\frac{2k}{\ii h}a_L(k).
		\end{equation*}
		By using (\ref{a_L+1}) and (\ref{b_L+1}) one can obtain that
		\begin{equation}
			\label{dituishi-A_L}
			A_{L+1}=A_L+B_L,
		\end{equation}
		where
		\begin{equation*}
			B_L=\lim_{k\to0}\left(\ee^{2\ii k(L+1)\sigma}b_L(k)-a_L(k) \right).
		\end{equation*}
		Also the recurrence formula for $B_L$ can be gotten as
		\begin{equation}
			\label{dituishi-B_L}
			B_{L+1}=(1-\sigma h)B_L-\sigma h A_L.
		\end{equation}
		
		By using equations (\ref{dituishi-A_L}) and (\ref{dituishi-B_L}) and performing simple calculations, the recurrence formula for $A_L$ is derived below
		\begin{equation}
			\label{dituishi-intial_data-A_L}
			\begin{cases}
				A_{L+2}+(\sigma h-2)A_{L+1}+A_L=0,\\
				A_1=-1,\ A_2=\sigma h-2,
			\end{cases}
		\end{equation}
		which shows that $A_L$ is a polynomial of $\sigma h$ and the degree of $A_L=A_L(\sigma h)$ is $\deg A_L\leq L-1$. Using the properties of Chebyshev polynomials, it can be known that the polynomial $A_L(\sigma h)$ has $L-1$ distinct positive roots
		\begin{equation*}
			(\sigma h)_j^{L}=2+2\cos\left(\frac{j\pi}{L} \right),\ j=1,2,\cdots,L-1.
		\end{equation*}
		\par		
		For $k\in\ii\R_+$, as $\sigma\to+\infty$, the following limiting holds
		\begin{equation*}
			\lim_{\sigma\to+\infty}a_L(k)=\left(1-\frac{\ii h}{2k} \right)^L,
		\end{equation*}
		which has a root of $L$ order at $k=\frac{\ii h}{2}$.
		\par
		\par	
		The analysis of the above critical cases leads to the proof of the theorem, see Figure \ref{sigma-k-plane} for details.
	\end{proof}
	\par
	From the uniform boundedness of $(\sigma h)_j^{L} (j=1,2,\cdots,L-1)$ with respect to integers $j$ and $L$, one has the following corollary:
	
	\begin{corollary}
		Let $h>0$ and $\sigma>0$, then when $U_n=h$ and $x_n=n\sigma\ (n=1,2,\cdots,L)$ in initial data (\ref{L-delta-intial}), there are $L$ solitons for $\sigma h\geq4$.
	\end{corollary}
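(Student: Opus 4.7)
The plan is to derive this corollary as a direct consequence of the theorem immediately preceding it. Recall that the theorem classifies the number of solitons by $\sharp_L = L-j$ whenever $(\sigma h)_{j+1}^L < \sigma h \leq (\sigma h)_j^L$, with the convention $(\sigma h)_0^L = +\infty$ and with strict inequality at $j=0$. Therefore, to guarantee $\sharp_L = L$ it suffices to verify that $\sigma h > (\sigma h)_1^L$.

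The key observation is the uniform upper bound on the leading critical value. By definition,
\begin{equation*}
(\sigma h)_1^L \;=\; 2 + 2\cos\!\left(\frac{\pi}{L}\right),
\end{equation*}
and since $\cos(\pi/L) < 1$ for every integer $L \geq 2$, one has $(\sigma h)_1^L < 4$ uniformly in $L$. (For $L=1$ the statement is vacuous since there is only one delta spike and the soliton count already follows from Theorem \ref{theorem-1}.)

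Thus the argument is a two-line deduction: given $\sigma h \geq 4$, the uniform bound yields $\sigma h \geq 4 > (\sigma h)_1^L$, which places $\sigma h$ in the interval corresponding to $j=0$ in the preceding theorem, and hence $\sharp_L = L$. Since there are no quantitative estimates to control and no limits to justify beyond invoking the elementary inequality $\cos(\pi/L)<1$, there is essentially no obstacle; the only subtlety worth mentioning explicitly in the write-up is the strict-versus-weak inequality at the $j=0$ endpoint, which is precisely why the hypothesis reads $\sigma h \geq 4$ rather than $\sigma h > (\sigma h)_1^L$.
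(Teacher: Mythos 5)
Your proposal is correct and is essentially the paper's own argument: the paper derives the corollary directly from the preceding theorem by noting the uniform bound $(\sigma h)_1^L = 2+2\cos(\pi/L) < 4$ for all $L$, so $\sigma h \geq 4$ forces the $j=0$ case and hence $\sharp_L = L$. Your observation that $4 > (\sigma h)_1^L$ holds strictly (so $\sigma h \geq 4$ indeed lands strictly inside the $j=0$ interval) is exactly the point the paper summarizes as ``uniform boundedness of $(\sigma h)_j^L$.''
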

	
	Specially, for $L=3$, the following corollary holds immediately:
	
	\begin{corollary} \label{corollary-5-2}
		When $U_1=U_2=U_3=h$, $x_1=\sigma$, $x_2=2\sigma$  and $x_3=3\sigma$ in  initial data (\ref{L-delta-intial}), for the following initial problem
		\begin{equation}
			\begin{cases}
				u_t-6uu_x+u_{xxx}=0,\\
				u(x,0)=-h\delta(x-\sigma)-h\delta(x-2\sigma)-h\delta(x-3\sigma),
			\end{cases}
		\end{equation}
		where $\sigma>0,h>0$, one has the following assertions:
		\begin{itemize}
			\item [(i)] For $0<\sigma h\leq1$, $\sharp_3=1$,
			\item [(ii)] For $1<\sigma h\leq3$, $\sharp_3=2$,
			\item [(iii)] For $3<\sigma h<\infty$, $\sharp_3=3$,
		\end{itemize}
		where $\sharp_3$ represents the number of solitons.
	\end{corollary}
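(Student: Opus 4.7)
The plan is to derive Corollary \ref{corollary-5-2} as a direct specialization of the preceding theorem on the number of discrete eigenvalues for the uniform-spike profile. Since the corollary corresponds exactly to the case $L=3$ with all heights equal to $h$ and equispaced locations $x_n = n\sigma$, essentially no new argument is required: one just has to evaluate the threshold sequence $(\sigma h)_j^L = 2 + 2\cos(j\pi/L)$ at $L=3$ and identify the three parameter intervals.

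First, I would compute the three cutoff values explicitly: with the convention $(\sigma h)_0^3 = +\infty$, we have
\[
(\sigma h)_1^3 = 2 + 2\cos(\pi/3) = 3, \qquad (\sigma h)_2^3 = 2 + 2\cos(2\pi/3) = 1, \qquad (\sigma h)_3^3 = 2+2\cos(\pi) = 0.
\]
The preceding theorem gives $\sharp_L = L-j$ whenever $(\sigma h)_{j+1}^L < \sigma h \leq (\sigma h)_j^L$, with a strict upper inequality in the top stratum $j=0$. Specializing $L=3$ and reading off:
\[
\begin{aligned}
j = 2:\quad & 0 < \sigma h \leq 1 \;\Rightarrow\; \sharp_3 = 1,\\
j = 1:\quad & 1 < \sigma h \leq 3 \;\Rightarrow\; \sharp_3 = 2,\\
j = 0:\quad & 3 < \sigma h < +\infty \;\Rightarrow\; \sharp_3 = 3,
\end{aligned}
\]
which is precisely the trichotomy claimed in the three cases (i)--(iii).

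Because this is a specialization, there is no substantive obstacle: the whole content of the proof sits inside the preceding theorem (whose hard part was the Chebyshev-polynomial analysis of the recurrence (\ref{dituishi-intial_data-A_L}) for $A_L(\sigma h)$, and the monotonicity of the positive zeros of $a_L(k)$ in the parameter $\sigma$). The only things worth double-checking in writing up the corollary are bookkeeping items: first, that the closed/open endpoints match the convention used in the theorem (the inequality at $\sigma h = 3$ is weak from below and strict from above at $\sigma h = +\infty$); second, that the translation invariance of the RH problem allows us to shift the support of the initial data from $\{0,\sigma,2\sigma\}$ to $\{\sigma,2\sigma,3\sigma\}$ without altering the spectral count, since an overall shift only multiplies the scattering matrix $S_3(k)$ by a factor of $e^{i k \sigma \hat{\sigma}_3}$ that does not affect the location of the zeros of $s_{11}(k)$. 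Finally, I would briefly remark that $\sharp_3 \geq 1$ for all $\sigma h > 0$, consistent with the general fact that a single attractive $\delta$-well already supports at least one bound state, which provides a sanity check of the result at the lower endpoint of case (i).
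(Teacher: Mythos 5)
Your proposal is correct and coincides with the paper's treatment: the corollary is stated there as an immediate specialization of the preceding theorem to $L=3$, and your evaluation of the cutoffs $(\sigma h)_1^3=3$, $(\sigma h)_2^3=1$, $(\sigma h)_3^3=0$ together with the endpoint conventions reproduces the trichotomy exactly. The extra remarks on translation invariance are harmless but unnecessary, since the theorem already places the spikes at $x_n=n\sigma$, which is precisely the corollary's configuration.
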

	Finally, numerical simulations are performed for $L=3$ in initial data (\ref{L-delta-intial}) to illustrate how it can evolve into three solitons. In this case, take $h=0.5$ and $\sigma=20$, which satisfies case (iii) in Corollary \ref{corollary-5-2}, then three solitons emerge. Figure \ref{L=3,sig=20,h=0.5,T=100} shows the numerical result of $-u(x,t)$ at $t=100$, where $u(x,t)$ solves the KdV equation with this initial condition. Similar to the case of $L=2$ in Figure \ref{L2,sig=20,h=0.5,T=100}, for time large enough, the asymptotic solution of the KdV equation with delta function initial profile of three spikes can split into dispersive waves and three solitons. It is seen from Figure \ref{L=3,sig=20,h=0.5,T=100} that although the initial amplitude $h$ of the three spikes is the same, as time increases, the amplitudes of the three solitons are different.

	\begin{figure}
		\centering
		\includegraphics[scale=0.4]{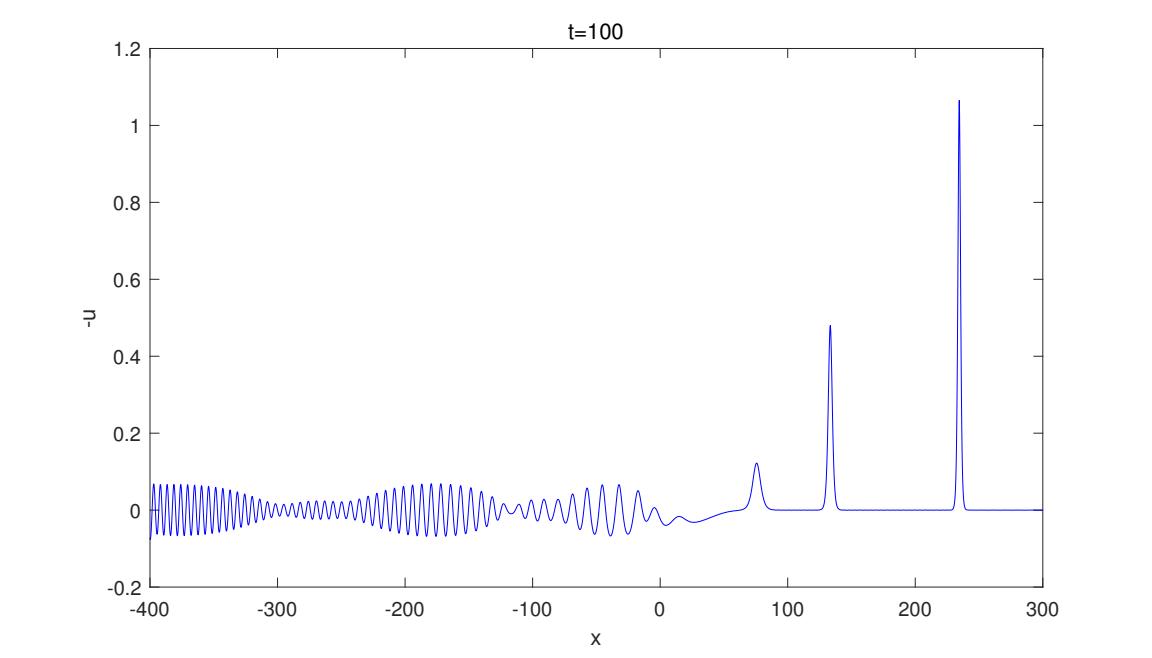}
		\caption{Numerical simulation of $-u(x,t)$ at $t=100$ for $h=0.5$ and $\sigma=20$ in Corollary \ref{corollary-5-2}.}
		\label{L=3,sig=20,h=0.5,T=100}
	\end{figure}

	\appendix
	
	\section{Two model Riemann-Hilbert problems}
	This appendix introduces two model Riemann-Hilbert problems, the parabolic-cylinder model problem \cite{Grunert-Teschl-2009} and the Painlev\'e II model problem.
	\par

	\begin{theorem}[Parabolic-cylinder model problem]
		Denote the contour $\Sigma^{(pc)}=\cup_{j=1}^{4}\Sigma^{(pc)}_j$ in Figure.\ref{pc-contour}, where
		\[\Sigma^{(pc)}_j=\{k\in\C|k=v\ee^{-\frac{\pi}{4}+\frac{\pi}{2}j},v\geq0\}\]
		is shown in Figure.\ref{pc-contour}.
		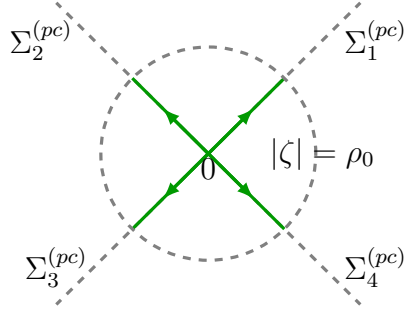
\begin{figure}[htpb]
			\centering
			\begin{tikzpicture}
				\draw[very thick,,dashed,gray](0,0)--(2,2);
				\draw[very thick,,dashed,gray](0,0)--(-2,2);
				\draw[very thick,,dashed,gray](0,0)--(2,-2);
				\draw[very thick,,dashed,gray](0,0)--(-2,-2);
				
				\draw[very thick,,-,black!40!green](0,0)--(1,1);
				\draw[very thick,,-,black!40!green](0,0)--(-1,1);
				\draw[very thick,,-,black!40!green](0,0)--(1,-1);
				\draw[very thick,,-,black!40!green](0,0)--(-1,-1);
				
				\draw[very thick,,-latex,black!40!green](0,0)--(0.6,0.6);
				\draw[very thick,,-latex,black!40!green](0,0)--(-0.6,0.6);
				\draw[very thick,,-latex,black!40!green](0,0)--(0.6,-0.6);
				\draw[very thick,,-latex,black!40!green](0,0)--(-0.6,-0.6);
				
				\draw[very thick,gray,dashed] (0,0) circle [radius=1.414];
				
				\node[]at(0,-0.2){$0$};
				\node[]at(1.5,0){$|\zeta|=\rho_0$};
				\node[]at(2.2,1.5){$\Sigma^{(pc)}_1$};
				\node[]at(-2.2,1.5){$\Sigma^{(pc)}_2$};
				\node[]at(-2,-1.5){$\Sigma^{(pc)}_3$};
				\node[]at(2.2,-1.5){$\Sigma^{(pc)}_4$};
			\end{tikzpicture}
			\caption{The contour $\Sigma^{(pc)}$ for the RH problem of $M^{(pc)}(s,k)$.}
			\label{pc-contour}
		\end{figure}
		
		Find a $2\times2$ matrix-valued function $M^{(pc)}(\zeta)$ satisfying the following RH problem:
		\[\begin{aligned}
			&M^{(pc)}_{+}(\zeta)=M^{(pc)}_{-}(\zeta)V^{(pc)}_j(\zeta),&&\zeta\in\Sigma^{(pc)}_j,\quad j=1,2,3,4,\\
			&M^{(pc)}(\zeta)\to I,&&\zeta\to\infty,
		\end{aligned}\]
		where the jump matrices are
		\[\begin{aligned}
			V^{(pc)}_1(\zeta)=
			\begin{pmatrix}
				1&-r_1(\zeta)\zeta^{2\ii\nu}\ee^{-t\Phi(\zeta)}\\
				0&1
			\end{pmatrix},\quad&
			V^{(pc)}_2(\zeta)=
			\begin{pmatrix}
				1&0\\
				r_2(\zeta)\zeta^{-2\ii\nu}\ee^{t\Phi(\zeta)}&1
			\end{pmatrix},\\
			V^{(pc)}_3(\zeta)=
			\begin{pmatrix}
				1&-r_3(\zeta)\zeta^{2\ii\nu}\ee^{-t\Phi(\zeta)}\\
				0&1
			\end{pmatrix},\quad&
			V^{(pc)}_4(\zeta)=
			\begin{pmatrix}
				1&0\\
				r_4(\zeta)\zeta^{-2\ii\nu}\ee^{t\Phi(\zeta)}&1
			\end{pmatrix}.
		\end{aligned}\]
		
		Assume there is some $\rho_0>0$ such that $V^{(pc)}_j(\zeta)=I$ for $|\zeta|>\rho_0$. Moreover, suppose that within $|\zeta|\leq\rho_0$ the following estimates hold:
		\begin{itemize}
			\item [(i)]The phase function satisfies $\Phi(0)=\ii\Phi_0\in\ii\R$, $\Phi^{\prime}(0)=0$, $\Phi^{\prime\prime}(0)=\ii$ and
			\[\pm\operatorname{Re}(\Phi(\zeta))\geq\frac{1}{4}|\zeta|^2,\quad `+\text{'}~\text{for}\  \zeta\in\Sigma_1\cup\Sigma_3,\quad `-\text{'}~ \text{for}\ \Sigma_2\cup\Sigma_4,\]
			\[|\Phi(\zeta)-\Phi(0)-\frac{\ii\zeta^2}{2}|\leq|\zeta|^3.\]
			\item [(ii)]There is some $r\in\mathbb{D}=\{k\in\C:|k|<1\},\ \alpha\in(0,1]$ and $L\in(0,\infty)$ such that the following H\"{o}lder condition holds:
			\[|r_j(\zeta)-r_j|\leq L|\zeta|^{\alpha},\]
			where $r_j~(j = 1,2,3,4)$ are
			\[r_1=\bar{r},\quad r_2=r,\quad r_3=\frac{\bar{r}}{1-|r|^2}\quad r_4=\frac{r}{1-|r|^2}.\]
		\end{itemize}
		\par
		Then the parabolic-cylinder model problem has a unique solution $M^{(pc)}(\zeta)$ that has the asymptotics for $\zeta\to \infty$:
		\begin{equation}
			M^{(pc)}(\zeta)=I+\frac{1}{\zeta}\frac{\ii}{t^{1/2}}
			\begin{pmatrix}
				0&-\beta\\
				\bar{\beta}&0
			\end{pmatrix}
			+O(t^{-\frac{1+\alpha}{2}}),
		\end{equation}
		for $|\zeta|>\rho_0$, where
		\[\beta=\sqrt{\nu}\ee^{\ii(\pi/4-\arg(r)+\arg(\Gamma(\ii\nu)))}\ee^{-\ii t\Phi_0}t^{-\ii\nu},\quad \nu=\frac{1}{2\pi}\log(1-|r|^2). \]
		\label{PCmodle}
	\end{theorem}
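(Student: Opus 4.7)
The plan is to reduce the variable-coefficient parabolic-cylinder problem to a constant-coefficient model problem that can be solved explicitly in terms of classical parabolic cylinder (Weber) functions, and then to quantify the discrepancy via small-norm Riemann-Hilbert theory.

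First I would rescale by $\zeta \mapsto \zeta/\sqrt{t}$ is already implicit in the statement, so work directly with $\zeta$. Because $\Phi(\zeta) = \ii\Phi_0 + \tfrac{\ii\zeta^2}{2} + O(|\zeta|^3)$ and $r_j(\zeta) = r_j + O(|\zeta|^\alpha)$ by hypothesis (ii), the jump matrices differ from their frozen-coefficient analogues $\tilde V_j^{(pc)}$, obtained by replacing $\Phi(\zeta)$ with $\ii\Phi_0 + \tfrac{\ii\zeta^2}{2}$ and $r_j(\zeta)$ with $r_j$, by quantities of order $t^{-\alpha/2}$ in $L^\infty \cap L^2$ on each ray after the natural rescaling $\zeta \mapsto \zeta/\sqrt{t}$ that localizes mass to $|\zeta| \lesssim 1$. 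This reduction is carried out by Beals-Coifman small-norm theory \cite{Beals-Coifman-1984}: write $M^{(pc)} = \tilde M^{(pc)} E$ with $\tilde M^{(pc)}$ solving the frozen problem and $E$ solving a small-norm RH problem whose resolvent is controlled by a Neumann series. This produces the error term $O(t^{-(1+\alpha)/2})$ once combined with the $1/\zeta$ decay of the Cauchy integral at infinity.

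Next I would solve the frozen-coefficient problem explicitly. Set
\begin{equation}
\tilde \Psi(\zeta) := \tilde M^{(pc)}(\zeta)\, \exp\!\left(-\tfrac{t}{2}\Phi_{\mathrm{mod}}(\zeta)\sigma_3 + \ii\nu\,\log\zeta\,\sigma_3\right),
\end{equation}
where $\Phi_{\mathrm{mod}}(\zeta) = \ii\Phi_0 + \tfrac{\ii\zeta^2}{2}$ and $\log\zeta$ is cut along some ray disjoint from $\Sigma^{(pc)}$. A direct check using the algebraic identity $V_1 V_2^{-1} V_3 V_4^{-1} = \mathrm{diag}(1-|r|^2, (1-|r|^2)^{-1})$ (the consistency condition that lets the $r_j$ be chosen as stated) shows that $\tilde\Psi$ has constant jumps on each ray of $\Sigma^{(pc)}$. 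Consequently $\tilde\Psi'(\zeta)\tilde\Psi(\zeta)^{-1}$ is entire, and matching its growth at infinity forces
\begin{equation}
\frac{d\tilde\Psi}{d\zeta} = \left(-\tfrac{\ii\zeta t}{2}\sigma_3 + t^{1/2}B\right)\tilde\Psi,
\end{equation}
for a constant off-diagonal matrix $B$. Writing out the components yields the Weber equation, and its standard solutions $D_{\ii\nu}$ and $D_{-\ii\nu}$ (with the reflection relations among them) allow $\tilde M^{(pc)}$ to be written in closed form in each sector.

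The final step is to extract the leading coefficient of $1/\zeta$ in the expansion of $\tilde M^{(pc)}$ at infinity. Using the classical asymptotics $D_{\ii\nu}(z) \sim z^{\ii\nu} e^{-z^2/4}$ together with the connection formula expressing $D_{\ii\nu}(\ee^{-3\pi\ii/4}z)$ as a linear combination of $D_{-\ii\nu -1}$'s, the $1/\zeta$ coefficient is computed directly. The constant that appears is
\begin{equation}
\beta = \sqrt{\nu}\,\ee^{\ii(\pi/4 - \arg r + \arg\Gamma(\ii\nu))}\,\ee^{-\ii t\Phi_0}\,t^{-\ii\nu},
\end{equation}
precisely as claimed, with the $\Gamma(\ii\nu)$ factor arising from the standard gamma-function identities in the connection formulas for $D_{\ii\nu}$. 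The factor $\sqrt\nu$ comes from $|\Gamma(\ii\nu)\Gamma(1-\ii\nu)| = \pi/\sinh(\pi\nu)$ together with $1-|r|^2 = \ee^{-2\pi\nu}$, which converts $\sinh$-expressions into $\nu$.

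The main obstacle is the bookkeeping in step two: keeping track of the branch of $\log\zeta$, the orientation of each ray, and the exact form of the connection formulas for the parabolic cylinder functions in the four sectors so that the gamma-factor exponents and signs come out correctly. Uniqueness is standard (any two solutions differ by an entire matrix that tends to $I$ at infinity, hence equals $I$), and existence follows once the explicit formula via Weber functions is written down; the $\alpha$-dependent error estimate in the final display reduces to checking that the H\"older remainder propagates through the Cauchy operator, which is straightforward given the compact support $|\zeta| \le \rho_0$ of the nontrivial jumps.
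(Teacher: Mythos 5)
The paper gives no proof of this theorem---it is imported from Grunert--Teschl \cite{Grunert-Teschl-2009}, which the appendix cites---and your outline is exactly the standard Deift--Zhou/Its argument used there: a small-norm (Beals--Coifman) reduction to the frozen-coefficient problem, with the H\"older modulus $|\zeta|^{\alpha}$ converting into the $O(t^{-(1+\alpha)/2})$ error after rescaling, followed by conjugation to constant jumps, the resulting entire-coefficient ODE solved by Weber/parabolic-cylinder functions, and extraction of the $1/\zeta$ coefficient through the $\Gamma$-function connection formulas. One caveat: you use $1-|r|^2=\ee^{-2\pi\nu}$, i.e.\ $\nu=-\frac{1}{2\pi}\log(1-|r|^2)>0$, which agrees with the convention in the body of the paper but has the opposite sign to the definition printed in this theorem statement (under which $\sqrt{\nu}$ would be imaginary); this is almost certainly a typo in the paper rather than a gap in your argument.
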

	\begin{theorem}[Painlev\'e II model problem]
		Denote the contour $\Sigma^{(P)}=\cup_{j=1}^{6}\Sigma^{(P)}_j$ in Figure \ref{PII-contour}, where
		\[\Sigma^{(P)}_j=\{k\in\C|k=v\ee^{\frac{\pi}{3}j},v\geq0\},\]
		\[\Omega^{(P)}_j=\{k\in\C|\frac{j-1}{3}\pi<\arg k<\frac{j}{3}\pi\}.\]
		\begin{figure}[H]
			\centering
			\begin{tikzpicture}
				\draw[very thick,black!40!green,-latex](0,0)--(0.577,1);
				\draw[very thick,black!40!green,-](0,0)--(1.154,2);
				\draw[very thick,black!40!green,-latex](0,0)--(-0.577,1);
				\draw[very thick,black!40!green,-](0,0)--(-1.154,2);
				\draw[very thick,black!40!green,-latex](0,0)--(-1.154,0);
				\draw[very thick,black!40!green,-](0,0)--(-2.308,0);
				\draw[very thick,black!40!green,-latex](0,0)--(-0.577,-1);
				\draw[very thick,black!40!green,-](0,0)--(-1.154,-2);
				\draw[very thick,black!40!green,-latex](0,0)--(0.577,-1);
				\draw[very thick,black!40!green,-](0,0)--(1.154,-2);
				\draw[very thick,black!40!green,-latex](0,0)--(1.154,0);
				\draw[very thick,black!40!green,-](0,0)--(2.308,0);
				
				\node[]at(1.3,2.2){$\Sigma^{(P)}_1$};
				\node[]at(-1.3,2.2){$\Sigma^{(P)}_2$};
				\node[]at(-2.5,0){$\Sigma^{(P)}_3$};
				\node[]at(-1.3,-2.2){$\Sigma^{(P)}_4$};
				\node[]at(1.3,-2.2){$\Sigma^{(P)}_5$};
				\node[]at(2.5,0){$\Sigma^{(P)}_6$};
				
				\node[red]at(1.731,1){$\Omega^{(P)}_1$};
				\node[red]at(0,2){$\Omega^{(P)}_2$};
				\node[red]at(-1.731,1){$\Omega^{(P)}_3$};
				\node[red]at(-1.731,-1){$\Omega^{(P)}_4$};
				\node[red]at(0,-2){$\Omega^{(P)}_5$};
				\node[red]at(1.731,-1){$\Omega^{(P)}_6$};
				
				\node[]at(0,-0.5){$0$};
			\end{tikzpicture}
			\caption{The contour $\Sigma^{(P)}$ for the RH problem of $M^{(P)}(s,k)$.}
			\label{PII-contour}
		\end{figure}
		Assuming $M^{(P)}(s,k)$ satisfies the following RH problem:\\
		(1) $M^{(P)}(s,\cdot)$ is sectionally analytic on $\C\backslash\Sigma^{(P)}$;\\
		(2) $M^{(P)}(s,k)\to I$ as $k\to \infty;$\\
		(3) $M^{(P)}_{+}(s,k)=M^{(P)}_{-}(s,k)V^{(P)}_j$ for $k\in\Sigma^{(P)}_j$, where the jump matrices are
		\begin{equation}
			\begin{aligned}
				&V_1=\begin{pmatrix}
					1&0\\
					p&1
				\end{pmatrix},\quad
				&&V_2=\begin{pmatrix}
					1&r\\
					0&1
				\end{pmatrix},\quad
				&&V_3=\begin{pmatrix}
					1&0\\
					q&1
				\end{pmatrix},\\
				&V_4=\begin{pmatrix}
					1&p\\
					0&1
				\end{pmatrix},\quad
				&&V_5=\begin{pmatrix}
					1&0\\
					r&1
				\end{pmatrix},\quad
				&&V_6=\begin{pmatrix}
					1&q\\
					0&1
				\end{pmatrix},
			\end{aligned}
		\end{equation}
		and $p,q,r$ is independent of $k,s$ and satisfy the constraint
		\[p+q+r+pqr=0.\]
		\par
		Then the Painlev\'e II model problem has a unique solution $M^{(P)}(s,k)$ that has the asymptotics for $k\to \infty$:
		\begin{equation}
			M^{(P)}(s,k)=I+\frac{\ii}{2k}
			\begin{pmatrix}
				\displaystyle\int_{s_0}^{s}y^2(\xi)\dd\xi&-y(s)\\
				y(s)&-\displaystyle\int_{s_0}^{s}y^2(\xi)\dd\xi
			\end{pmatrix}
			+O(k^{-2}),
		\end{equation}
		where $y(s)=y(s;p,q,r)$ is the Painlev\'e II equation with Stokes constants $(p, q, r)$.
		\label{Painleve-II model}
	\end{theorem}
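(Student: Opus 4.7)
My plan is to recognize this RH problem as the classical Flaschka--Newell isomonodromy characterization of Painlev\'e II: the six-ray jump contour with upper-triangular and lower-triangular Stokes matrices alternating on successive rays, the constant jumps $V_j$ independent of $k$, and the cubic constraint $p+q+r+pqr=0$ are exactly the Stokes data that parameterize PII transcendents. Accordingly, the proof splits into three steps: existence and uniqueness of the solution $M^{(P)}(s,k)$, verification that it satisfies a pair of linear ODEs in $k$ and $s$ whose compatibility is PII, and extraction of the stated $1/k$ asymptotics.

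For existence and uniqueness, I would first observe that the product of jump matrices around the origin in cyclic order equals $I$ iff $p+q+r+pqr=0$, which guarantees that there is no singularity at $k=0$ and the RH problem is globally defined. Uniqueness follows from a standard vanishing lemma: since the jump matrices $V_j$ satisfy the Schwarz symmetry $V_j(k)^* = V_{j'}(-\bar k)$ for the conjugate ray $j'$, any homogeneous solution $N(k)$ with $N(k)\to 0$ as $k\to\infty$ gives rise to a matrix $H(k)=N(k)N(-\bar k)^*$ which is analytic across the contour and decays at infinity, hence vanishes by Liouville; one then concludes $N\equiv 0$. Existence then follows from Fredholm alternative applied to the Beals--Coifman operator associated with $V^{(P)}$.

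The second step is to exploit the fact that since all $V_j$ are independent of both $s$ and $k$, the logarithmic derivatives $\Psi_s \Psi^{-1}$ and $\Psi_k \Psi^{-1}$ of $\Psi(s,k):=M^{(P)}(s,k)e^{-\ii(\frac{4}{3}k^3+sk)\sigma_3}$ (after restoring the exponential factor suggested by the connection to Section 4.3) have no jumps across $\Sigma^{(P)}$; by Liouville's theorem combined with the polynomial growth dictated by the exponential factor, these must be polynomial in $k$. Expanding $M^{(P)}=I+\tfrac{M_1(s)}{k}+\tfrac{M_2(s)}{k^2}+O(k^{-3})$ and matching powers of $k$ yields precisely the Flaschka--Newell Lax pair, whose compatibility is the PII equation $y''=sy+2y^3$, with $y(s)=2\ii (M_1)_{12}=-2\ii (M_1)_{21}$.

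For the asymptotics, substituting the expansion $M^{(P)}=I+M_1/k+M_2/k^2+\cdots$ into the $s$-equation and using $M^{(P)}\to I$ gives $(M_1)_{12}=-(M_1)_{21}$ immediately. Identifying the diagonal entries of $M_1$ requires one extra integration: from the trace identity coming from the $k$-component of the Lax pair one shows $\partial_s (M_1)_{11}=-\tfrac{\ii}{2}y(s)^2$, and fixing the constant of integration by the normalization at some reference point $s_0$ (taken so that $y(s)\to 0$ as $s\to s_0$ under the Ablowitz--Segur data) yields the stated formula $(M_1)_{11}=\tfrac{\ii}{2}\int_{s_0}^{s} y^2(\xi)\,d\xi$. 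The main obstacle will be this last normalization issue: one must argue carefully that the stated Stokes data $(p,q,r)$ with $r=0$ correspond to the Ablowitz--Segur branch of PII for which $y$ decays at $+\infty$, so that $s_0=+\infty$ gives a convergent improper integral, whereas for generic $(p,q,r)$ one must choose $s_0$ as an appropriate reference so that the integral converges; beyond that, all arguments are essentially algebraic manipulations of the Lax pair together with the vanishing lemma.
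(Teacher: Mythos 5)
Your proposal is essentially correct but runs in the opposite direction from the paper's proof. The paper argues \emph{directly}: it postulates the Flaschka--Newell Lax pair whose compatibility condition is Painlev\'e II, takes its canonical sectorial solutions $\Psi_j=\hat\Psi_j\,\ee^{-(\frac{4\ii}{3}k^3+\ii sk)\sigma_3}$ in the six Stokes sectors $\Omega^{(P)}_j$, notes that these satisfy the jump relations $\Psi_{j+1}=\Psi_j V^{(P)}_j$, reads off the coefficient $(\hat\Psi)_1$ by substituting the large-$k$ expansion into the ODE system, and finally identifies $M^{(P)}=\hat\Psi$ by uniqueness of the RH problem. You instead argue \emph{inversely}: solvability of the RH problem first (cyclic consistency $V_1\cdots V_6=I$ at the origin, which is indeed equivalent to $p+q+r+pqr=0$, plus a vanishing lemma and Fredholm theory), then a dressing/Liouville argument showing $\Psi_k\Psi^{-1}$ and $\Psi_s\Psi^{-1}$ are polynomial in $k$, recovering the same Lax pair and hence PII. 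Both routes are standard; yours is more self-contained on the RH side, while the paper outsources existence to the classical theory of the PII transcendent. The trade-off is that your existence step is actually the delicate one: the Schwarz-type symmetry you invoke for the vanishing lemma holds only for restricted Stokes data (e.g.\ the Ablowitz--Segur range $q=-\bar p$, $r$ as needed, which does cover the paper's application $p=r(0)$, $q=-r(0)$, $r=0$), and for generic $(p,q,r)$ the RH problem fails to be solvable at the poles of the PII transcendent, so unconditional existence cannot be claimed --- a caveat that applies equally to the theorem as stated. Two small points: the jump matrices must carry the conjugating factor $\ee^{-\ii(\frac{4}{3}k^3+sk)\hat\sigma_3}$ (as in the paper's Section 4.1.3) for the constant-jump dressing argument to apply, which you correctly restore; and with the paper's sign conventions one gets $\partial_s(M_1)_{11}=+\tfrac{\ii}{2}y^2$ rather than $-\tfrac{\ii}{2}y^2$, consistent with the stated $(M_1)_{11}=\tfrac{\ii}{2}\int_{s_0}^{s}y^2(\xi)\,\dd\xi$.
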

	
	\begin{proof}
		Assuming the $2\times2$ matrix $\Psi=\Psi(s,k)$ satisfies the following linear problem:
		\begin{equation}
			\begin{aligned}
				\frac{\dd\Psi}{\dd k}&=
				\begin{pmatrix}
					-4\ii k^2-\ii s-2\ii y^2&4ku+2\ii y'\\
					4ky-2\ii y'&4\ii k^2+\ii s+2\ii y^2
				\end{pmatrix}\Psi,\\
				\frac{\dd\Psi}{\dd s}&=
				\begin{pmatrix}
					-\ii k &y\\
					y&\ii k
				\end{pmatrix}\Psi,
			\end{aligned}
			\label{PII_CC}
		\end{equation}
		where $y=y(s)$ is the solution of the Painlev\'e II equation
		\begin{equation}
			y''(s)-sy(s)-2y^3(s)=0,
			\label{PII}
		\end{equation}
		with $y'=\dd y/\dd s$. The compatibility conditions of linear problem (\ref{PII_CC}) can deduce the Painlev\'e II equation in (\ref{PII}).
		
		Equation (\ref{PII_CC}) has unique solutions $\Psi_j(s,\cdot)=\Psi(s,\cdot)|_{\Omega^{(P)}_j}$ with the asympototics
		\begin{equation}
			\Psi(s,k)=:\hat{\Psi}(s,k)\ee^{-((4\ii/3)k^3+\ii sk)\sigma_3}:=\left(I+\frac{(\hat{\Psi}_j)_1}{k}+\frac{(\hat{\Psi}_j)_2}{k^2}+\cdots \right)\ee^{-((4\ii/3)k^3+\ii sk)\sigma_3}
			\label{PsiExp}
		\end{equation}
		for $k\in\Omega_j$ as $k\to\infty$. In fact, the function $\Psi(s,k)$ satisfies the jump relations
		\begin{equation}
			\Psi_{j+1}(s,k)=\Psi_j(s,k)V^{(P)}_j,\quad 1\leq j\leq6, \quad(\Psi_7=\Psi_1).
		\end{equation}
		\par
		Substituting the expansion (\ref{PsiExp}) into the linear problem(\ref{PII_CC}) yields
		\begin{equation}
			\hat{\Psi}(s,k)=I+\frac{\ii}{2k}
			\begin{pmatrix}
				\displaystyle\int_{s_0}^{s}y^2(\xi)\dd\xi&-y(s)\\
				y(s)&-\displaystyle\int_{s_0}^{s}y^2(\xi)\dd\xi
			\end{pmatrix}
			+O(k^{-2}).
		\end{equation}
		
		By using the uniqueness of the RH problem, it is known that $M^{(P)}(s,k)=\hat{\Psi}(s,k)$. This completes the proof of this theorem.
	\end{proof}
	
	\subsection*{Acknowledgements}
	
	This work was funded by National Natural Science Foundation of China through grant numbers 12371247 and 12431008.

	
	\pdfbookmark[1]{References}{ref}
	
	
\end{document}